\theoremstyle{plain}      
\newtheorem{thm}{Theorem}[section]     
\newtheorem{theorem}[thm]{Theorem}     
\newtheorem{corollary}[thm]{Corollary}     
\newtheorem{lemma}[thm]{Lemma}     
\newtheorem{proposition}[thm]{Proposition}     
\newtheorem{conjecture}[thm]{Conjecture}     
\theoremstyle{remark}      
\newtheorem{example}[thm]{Example} 
\newtheorem{remark}[thm]{Remark}
\theoremstyle{definition}      
\newtheorem{definition}[thm]{Definition}     
\def\al{{\alpha}}
\let\La\Lambda
\def\Si{{\Sigma}}
\def\ga{{\gamma}}
\def\epsilon{{\varepsilon}}
\def\ep{{\varepsilon}}
\def\Ga{{\Gamma}}
\def\eg{e.\thinspace g.\ \ignorespaces}
\def\ie{i.\thinspace e.\ \ignorespaces}
\DeclareMathAlphabet{\doba}{U}{msb}{m}{n}         
\gdef\mC{\doba{C}}
\gdef\mN{\doba{N}}
\gdef\mR{\doba{R}}
\gdef\mS{\doba{S}}
\gdef\mZ{\doba{Z}}
\def\cR{\mathcal{R}}
\def\Hom{{\mathop{\rm Hom}}}
\def\End{{\mathop{\rm End}}}
\def\vol{{\mathop{\rm vol}}}
\def\id{{\mathop{\rm id}}}
\let\Id\id
\def\mmod{{\;\mathop{\rm mod}\,}}
\def\mmmod{{\mathop{\rm mod}\;}}
\def\ind{{\mathop{\rm ind}}}
\let\pa\partial     
\let\ti\tilde
\let\witi\widetilde
\def\divv{{\mathop{\rm div}}}
\newcommand{\definedas}{\mathrel{\raise.095ex\hbox{\rm :}\mkern-5.2mu=}}
\def \be{\begin{eqnarray*}}
\def \ee{\end{eqnarray*}}
\def \ben{\begin{enumerate}}
\def \een{\end{enumerate}}
\def \beit{\begin{itemize}}
\def \eeit{\end{itemize}}
\def \bui#1#2{\mathrel{\mathop{\kern 0pt#1}\limits^{#2}}}
\def \buil#1#2{\mathrel{\mathop{\kern 0pt#1}\limits_{#2}}}
\def \bfll{\begin{flushleft}}
\def \efll{\end{flushleft}}
\def \bflr{\begin{flushright}}
\def \eflr{\end{flushright}}
\def \lra{\longrightarrow}
\def \lmt{\longmapsto}
\def \wih{\widehat}
\def \C{\mathbb{C}}
\def \R{\mathbb{R}}
\def\Dir{{\mathchoice{D\kern-7pt\slash\kern1.8pt}{D\kern-7pt\slash\kern1.8pt}{{\scriptstyle D\kern-5.4pt\slash\kern1.3pt}}{{\scriptscriptstyle D\kern-5.0pt\slash\kern1.1pt}}}}
\let\ol\overline
\let\wihat\widehat
\begin{document}     


\title
[Dirac-harmonic maps from index theory]
{Dirac-harmonic maps from index theory}
 
\author{Bernd Ammann} 
\address{Fakult\"at f\"ur Mathematik \\ 
Universit\"at Regensburg \\
D-93040 Regensburg}
\email{bernd.ammann@mathematik.uni-regensburg.de}

\author{Nicolas Ginoux} 
\address{Fakult\"at f\"ur Mathematik \\ 
Universit\"at Regensburg \\
D-93040 Regensburg}
\email{nicolas.ginoux@mathematik.uni-regensburg.de}

\begin{abstract}
We prove existence results for Dirac-harmonic maps using index theoretical tools.
They are mainly interesting if the source manifold has dimension $1$ or 
$2$ modulo $8$.
Our solutions are uncoupled in the sense that the underlying map between 
the source and target manifolds is a harmonic map. 
\end{abstract}

\subjclass[2010]{58E20 (Primary), 53C43, 58J20, 53C27 (Secondary)}
%

\date{\today}

\keywords{Dirac harmonic maps, index theory} 

\maketitle

\setcounter{tocdepth}{1}
\tableofcontents


\section{Introduction}

Dirac-harmonic maps are stationary points of the fermionic analogue
of the energy functional. The associated Euler-Lagrange equation is 
a coupled system consisting of a map $f:M\to N$ and a spinor on $M$
twisted by $f^*TN$, such that the (twisted) 
spinor is in the kernel of the (twisted) Dirac operator. 
The goal of this article is to use index theoretical tools
for providing such Dirac-harmonic maps. 

\subsection{Dirac-harmonic maps}\label{subsec.dhintro}

In mathematical physics a (non-linear) sigma-model consists of Riemannian
manifolds $M$ and $N$. For simplicity we always assume $M$ and $N$ 
to be compact. The classical or bosonic energy of a map $f:M\to N$ is defined as
$\frac12 \int_M|df|^2$.
Stationary points of
this functional are called harmonic maps. 
Harmonic maps from $M$ to $N$ 
are interpreted as solutions of this sigma-model. The term ``non-linear'' indicates that the target manifold 
$N$ is not a vector space and will be omitted from now on. 

A super-symmetric sigma-model 
(see \cite[Chapter 3]{deligne.freed:99}, \cite{Freed5lect}) 
also consists of Riemannian
manifolds~$M$ and~$N$, but in addition to above one assumes that~$M$ 
carries a fixed spin structure.
A fermionic energy functional can now be 
defined on pairs $(f,\Phi)$, where $f$ is again a map $M\to N$ 
and $\Phi$ is a spinor
on $M$ twisted by $f^*TN$.
In the literature a linear and a non-linear versions of this functional are considered, see 
Subsection~\ref{subsec.variants} for details. 
In our article and in most of the mathematical literature one considers
the fermionic energy functional 
  $$L(f,\Phi)=\frac12 \int_M \left(|df|^2+\<\Phi,\Dir^f \Phi\>\right)\,dv^M,$$
where $\Dir^f$ is the Dirac operator acting on spinors twisted by $f^*TN$.
Stationary points $(f_0,\Phi_0)$ of this fermionic energy functional are called 
Di\-rac-har\-mo\-nic maps.
They are characterized by the equations $\Dir^{f_0}\Phi_0=0$ and $\mathrm{tr}_g(\nabla df_0)=\frac{V_{\Phi_0}}{2}$, 
see Proposition \ref{p:critpointL1} below. 
They are interpreted as solutions of the 
super-symmetric sigma-model.
Regularity and existence questions for 
Dirac-harmonic maps have been the subject of many articles recently where important results have been obtained, see Section~\ref{sec.previous}. 

\subsection{Trivial Dirac-harmonic maps}\label{subsec.triv}

A harmonic map together with a spinor 
that vanishes everywhere provides a Dirac-harmonic map.
In the present article, such Dirac-harmonic maps are called 
\emph{spinor-trivial} Dirac-harmonic maps. 

Another class
of obvious solutions consists of constant maps $f$ together with a spinor 
$\Phi$ lying in the kernel of the (untwisted) Dirac operator, see Subsection \ref{subsec.kernel}. 
Such Dirac-harmonic maps will be called \emph{map-trivial}.
However, the 
dimension of the kernel depends in a subtle way on the 
conformal structure of~$M$. For the sake of clarity we give a small review 
on known results about the kernel in Subsection~\ref{subsec.kernel}.

\subsection{Main result}\label{subsec.main}
\begin{definition}
A Dirac-harmonic map will be called 
\emph{uncoupled} if $f$ is harmonic, otherwise it is called \emph{coupled}.
\end{definition}
Most of the existence results for Dirac-harmonic maps in the 
literature actually yield uncoupled Dirac-harmonic maps. 
Coupled Dirac-harmonic maps are discussed in \cite[Thm. 3]{JostMoZhu09}, 
see Subsection~\ref{subsec.coupleddh} for details.

Our result provides criteria for the existence of sufficiently many 
non-trivial, uncoupled Dirac-harmonic maps.

\begin{theorem}\label{t:mainresult}
For $M$ and $N$ as above consider a homotopy class $[f]$ of maps $f:M^m\to N^n$ such that the 
$\mathrm{KO}_m({\rm pt})$-valued index $\al(M,[f])$ is non-trivial. 
Assume that $f_0\in [f]$ is a harmonic map. Then there is a linear
space~$V$ of real dimension $a_m$ such that all 
$(f_0,\Phi)$, $\Phi\in V$, are Dirac-harmonic maps, where 
\begin{equation*}a_m:=
\left\{ 
\begin{matrix}
2&\mbox{if }m\equiv 0 \; (8)\hfill\\
2&\mbox{if }m\equiv 1 \; (8)\hfill\\
4&\mbox{if }m\equiv 2 \; (8)\hfill\\
4&\mbox{if }m\equiv 4 \; (8).\hfill
\end{matrix}\right.
\end{equation*}
\end{theorem}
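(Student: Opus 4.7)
Since $f_0$ is harmonic, by Proposition~\ref{p:critpointL1} a pair $(f_0,\Phi)$ is Dirac-harmonic if and only if $\Dir^{f_0}\Phi=0$ and $V_\Phi=0$. The task is therefore to produce a real linear subspace $V\subset \ker\Dir^{f_0}$ of real dimension $a_m$ on which the quadratic expression $V_\Phi$ vanishes identically, equivalently on which the polarized symmetric form $(\Phi,\Psi)\mapsto V_{\Phi,\Psi}$ vanishes on $V\times V$.

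\textbf{Step 1 (Index theory).} I would view $\Dir^{f_0}$ as a real, $\mathrm{Cl}_m$-linear elliptic operator on the real twisted spinor bundle $\Sigma^{\mathbb{R}}M \otimes_{\mathbb{R}} f_0^*TN$. Its $KO_m(\mathrm{pt})$-valued Clifford-linear analytic index is a homotopy invariant of the principal symbol, and, by the $KO$-theoretic Atiyah--Singer theorem applied to the real vector bundle $f_0^*TN$, it equals $\alpha(M,[f])$. The hypothesis $\alpha(M,[f])\neq 0$ then forces $\ker \Dir^{f_0}$ to be non-trivial. Since this kernel inherits a $\mathrm{Cl}_m$-module structure from the right Clifford action on $\Sigma^{\mathbb{R}}M$, the classical table of real Clifford representations produces a minimal Clifford submodule $V\subset \ker\Dir^{f_0}$ of real dimension exactly $a_m$ in each of the four residue classes listed.

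\textbf{Step 2 (Vanishing of $V_\Phi$).} The map $\Phi\mapsto V_\Phi$ is a quadratic map with values in the bundle $f_0^*TN$, on which the auxiliary Clifford action used in Step~1 is trivial. I would verify from the explicit curvature-type formula for $V_\Phi$ (involving the Riemann tensor of $N$ pulled back by $f_0$ and a spinor pairing on $\Sigma^{\mathbb{R}}M$) that the polarization $V_{\cdot,\cdot}$ is Clifford-equivariant. A Schur-type argument, together with the skew-symmetry of Clifford generators in the relevant spinor pairing, then forces any such equivariant symmetric bilinear form to vanish on the minimal Clifford module $V$, yielding $V_\Phi=0$ for every $\Phi\in V$.

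\textbf{Main obstacle.} The delicate step is Step~2: the purely dimensional statement from Step~1 does not by itself produce Dirac-harmonic pairs, so one must check that the algebraic invariance of $V_\Phi$ holds on the entire Clifford submodule $V$ rather than only on a proper subspace of dimension smaller than $a_m$. This requires unpacking the Clifford-equivariance of the curvature-type expression in an adapted frame and tracking the interaction between the Clifford action on spinors and the fixed factor $f_0^*TN$. A secondary point is the identification of the Clifford-linear analytic index with the topologically defined $\alpha(M,[f])$ in the twisted setting, where the coefficient bundle $f_0^*TN$ varies with the homotopy class.
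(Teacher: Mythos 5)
Your Step 1 is essentially sound as a dimension count, though it misattributes the mechanism slightly: the paper works with complex spinors and reads off the lower bounds on $\dim_{\mathbb{R}}\ker(\Dir^{f_0})$ directly from the index formulas (Atiyah--Singer when $m\equiv 0,4$ and the $\mathbb{Z}_2$-valued $\alpha$-genus when $m\equiv 1,2$), rather than from a $\mathrm{Cl}_m$-linear real index, but the outcome is the same: $\alpha(M,[f])\neq 0$ forces $\ker(\Dir^{f_0})$ to have real dimension at least $a_m$.

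The genuine gap is Step 2. You propose that a ``Clifford-equivariance'' of the quadratic form $\Phi\mapsto V_\Phi$ combined with a Schur-type argument forces $V_\Phi\equiv 0$ on a minimal Clifford submodule, but no such equivariance is established, and it is not true that $V_\Phi$ vanishes for purely algebraic reasons on arbitrary vectors of $\ker(\Dir^{f_0})$. The only algebraic cancellation of this type available in the paper is the single identity $V_{G\otimes\mathrm{id}(\Phi)}=-V_\Phi$, where $G$ is the chirality operator (even $m$) or the real structure ($m\equiv 1\,(8)$). This kills $V_\Phi$ only on the $G$-eigenspaces of $\ker(\Dir^{f_0})$, and in the case $m\equiv 1\,(8)$ the $G$-eigenspaces inside a $2$-real-dimensional kernel have real dimension only $1<a_m=2$, so the algebraic argument alone cannot deliver a space of dimension $a_m$. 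The paper escapes this by a case distinction that is absent from your sketch: when $f_0$ is \emph{perturbation-minimal}, the dimension of $\ker(\Dir^{f_t})$ is locally constant, and a purely analytic argument (projecting onto the kernel of $\Dir^{f_t}$ and controlling it via the spectral gap, Proposition~\ref{p:perturbmin}) shows that $V_\Phi=0$ for \emph{every} $\Phi\in\ker(\Dir^{f_0})$, with no Clifford symmetry invoked at all; when $f_0$ is \emph{not} perturbation-minimal, the kernel is strictly larger, and the grading argument on the larger space supplies a $G$-eigenspace of dimension at least $a_m$ (indeed the paper gets the bigger bounds $d_m$). Your plan neither carries out the perturbation argument nor quantifies the $G$-eigenspace dimension, so the decisive step --- producing a full $a_m$-dimensional subspace on which $V_\Phi$ actually vanishes --- is missing.
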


This theorem 
is an immediate connsequence of Theorem~\ref{t:f0notpertmin}. 
It is mainly interesting in dimensions~$1$ and~$2$ modulo~$8$.
In dimensions divisible by $4$ the statement
directly follows from the Atiyah-Singer-index theorem and the grading techniques of 
Sections~\ref{sec.grad} and~\ref{sec.m4k}.
In dimension $1$ modulo $8$, these grading techniques would also yield a similar result, but the space of Dirac-harmonic maps obtained
this way would have only real dimension $1$ instead of $2$.

In order to derive such results we have structured the article as follows.
We start by giving an overview about known results in Section~\ref{sec.previous} in order to compare 
them to our results. It is logically independent from the rest.
Having introduced the notations and the fermionic energy in
Section~\ref{sec.prelim}, we recall
some well-known facts about the index 
of twisted Dirac operators in Section~\ref{sec.index} and the variational formula in Section~\ref{sec.var}.
This immediately leads to the first existence result using grading techniques (Section~\ref{sec.grad})
and the result in dimensions $m$ divisible by $4$ (Section~\ref{sec.m4k}).
In order to get better results in dimensions $8k+1$ and $8k+2$ we study the ``minimal'' and ``non-minimal case'' 
in Sections~\ref{sec.min} and \ref{sec.nonmin}. 
Finally, we discuss in Section~\ref{sec.examples} numerous examples to which the above theorem applies.

\section{Previously known results}\label{sec.previous}

To compare our results with the existing literature
we briefly recall several known facts about Dirac-harmonic maps.

Results about regularity,
investigations about the necessity for the mapping component $f$ to be harmonic,
removal of singularity theorems and  blow-up analysis of Dirac-harmonic maps
are developed in \cite{ChenJostLiWang06,ChenJostLiWang05,Zhao07,
WangXu09,Zhu_CVPDE09,Zhu_AGAG09}. We do not give details, as these 
issues are far from our topic.

We now summarize existence results for Dirac-harmonic maps. 
Not many concrete examples are known. Apart from the trivial examples 
in subsections~\ref{subsec.triv} and~\ref{subsec.kernel}, almost all examples occur in dimension $m=2$ and can be divided into 
uncoupled and coupled solutions. 
Before we describe them, recall first that, in dimension $m=2$, the existence of a Dirac-harmonic map only depends 
on the conformal class of the metric chosen on $M$: a pair $(f_0,\Phi_0)$ 
is Dirac-harmonic on $(M^2,g)$ if and only if $(f_0,e^{-\frac{u}{2}}\Phi_0)$ 
is Dirac-harmonic on $(M^2,e^{2u}g)$, whatever $u\in C^\infty(M,\R)$ is.
{}From the point of view of string theory, the case $m=2$ is 
of central importance as it describes the evolution of strings in space-times.

\subsection{Existence of uncoupled Dirac-harmonic maps}

The first existence result, which appears 
in \cite[Prop. 2.2]{ChenJostLiWang06} for $M=N=\mathbb{S}^2$ and 
then in \cite[Thm. 2]{JostMoZhu09} for general surfaces $M$, is based on an 
explicit construction involving a harmonic map and 
so-called \emph{twistor-spinors} on the source manifold.
Twistor-spinors are sections of the spinor bundle $\Sigma M$ of $M$ 
lying in the kernel of the Penrose operator 
$P:C^\infty(M,\Sigma M)\lra C^\infty(M,T^*M\otimes\Sigma M)$, 
$P_X\varphi:=\nabla_X\varphi+\frac{1}{m}X\cdot \Dir\varphi$ (here $\Dir$ denotes the untwisted Dirac operator on $M$). 
The construction goes as follows.
Let $M$ be any $m$-dimensional (non ne\-ces\-sa\-ri\-ly compact) spin manifold and $f:M\lra N$ be a smooth map.
For an arbitrary $\varphi\in C^\infty(M,\Sigma M)$ consider the smooth section $\Phi:=\sum_{j=1}^m e_j\cdot\varphi\otimes df(e_j)$ of $\Sigma M\otimes f^*TN$, where $(e_j)_{1\leq j\leq m}$ is a local orthonormal basis of $TM$.
Note that this (pointwise) definition does not depend on the choice of the local orthonormal basis $(e_j)_{1\leq j\leq m}$ and that the vanishing of $\Phi$ is equivalent to that of $\varphi$ as soon as $f$ is an immersion.
Then a short computation shows that
\[\Dir^f\Phi=-2\sum_{j=1}^m P_{e_j}\varphi\otimes df(e_j)+\frac{2-m}{m}\sum_{j=1}^me_j\cdot \Dir\varphi\otimes df(e_j)-\varphi\otimes\mathrm{tr}_g(\nabla df).\]
Therefore, if $\varphi$ is a twistor-spinor, $m=2$ and $f$ is harmonic, then $\Dir^f\Phi=0$.
Moreover, if $m=2$, then one calculates that the vector field $V_\Phi$ defined in Proposition~\ref{p:critpointL1} vanishes.
Therefore, $(f,\Phi)$ is a Dirac-harmonic map as soon as $f$ is 
harmonic and the spinor $\varphi$ involved in the definition of $\Phi$ is a 
twistor-spinor.
This construction has the obvious drawback that the only closed surfaces 
admitting non-zero twistor-spinors are the $2$-torus $\mathbb{T}^2$ 
with its non-bounding spin structure and the $2$-sphere $\mathbb{S}^2$.
In particular no example on a 
hyperbolic surface can be given using this approach.
However, let us mention that all Dirac-harmonic maps from $\mathbb{S}^2$ 
to $\mathbb{S}^2$ are of that form \cite[Thm. 1.2]{Yang09}.
Vanishing results in case the spinor part $\Phi$ has the form above are considered 
in \cite{Mo10}.

\subsection{Existence of coupled Dirac-harmonic maps}\label{subsec.coupleddh}

Up to the know\-led\-ge of the authors, only two examples of coupled Dirac-harmonic maps are known.
Both are constructed explicitly considering particular iso\-me\-tric but 
non-minimal immersions from special non-compact source manifolds into the hyperbolic space.
The first one deals with an explicit non-minimal isoparametric immersion from a hyperbolic surface of revolution 
into the $3$-di\-men\-sio\-nal hyperbolic space \cite[Thm. 3]{JostMoZhu09}.

The second one is provided by the totally umbilical (but non-totally geodesic) embedding of hyperbolic hyperplanes of sectional curvature $-\frac{4}{m+2}$ into the $m+1(\geq 4)$-dimensional hyperbolic space of sectional curvature $-1$. This example can be handled with the twistor-spinor-ansatz of~\cite{JostMoZhu09} and is carried out in~\cite{AmmGinJostMoZhu09}. 

\subsection{More on trivial Dirac-harmonic maps}\label{subsec.kernel}
In this subsection we will explain a product construction for producing examples of Dirac-harmonic maps, and then we will summarize
the knowledge about map-trivial ones. The main objective of the article is to derive existence of Dirac-harmonic maps which do not fall in any of these trivial categories: spinor-trivial one, map-trivial ones, products of them.

To understand the product construction, assume that $f_1:M\to N_1$ and  $f_2:M\to N_2$ are smooth maps, so that they are the components of the map $(f_1,f_2):M\to N_1\times N_2$.
Similarly, let $\Phi_j\in \Ga(\Si M \otimes f_j^*TN_j)$, $j=1,2$, and let $(\Phi_1,\Phi_2)\in \Ga(\Si M \otimes (f_1^*TN_1\oplus f_2^*TN_2)$
denote their sum. 
Let $L$ (resp.\ $L_i$) be the fermionic energy functional for $N=N_1\times N_2$ (resp.\ $N=N_i$).
Then $L((f_1,f_2),(\Phi_1,\Phi_2))= L_1(f_1,\Phi_1) + L_2(f_2,\Phi_2)$.
Thus, the map $((f_1,f_2),(\Phi_1,\Phi_2))$ is Dirac-harmonic if and only if both $(f_1,\Phi_1)$ and $(f_2,\Phi_2)$ are Dirac-harmonic.

So we obtain new examples of Dirac-harmonic maps by taking products of map-trivial Dirac harmonic maps 
with spinor-trivial maps.

The pair $(f,\Phi)$ is a map-trivial Dirac-harmonic map if and only if the following holds:
\begin{enumerate}
 \item[(1)] $f$ is constant, say $f\equiv x\in N$,
 \item[(2)] $\Phi\in \Gamma(\Si M\otimes T_xN)
\cong  \Gamma(\Si M)\otimes \mR^n$, $n=\dim N$, is a harmonic spinor, i.e.\ $\Dir^f \Phi=0$ and $\Dir^f=\Dir \otimes \Id_{\mR^n}$ 
where $\Dir :\Gamma(\Si M)\to \Gamma(\Si M)$ is the untwisted Dirac operator.
\end{enumerate}

Thus, to determine the map-trivial Dirac-harmonic maps, one has to determine the kernel of the untwisted Dirac operator.
The dimension of the kernel is invariant under conformal change of the metric, but it depends on the conformal class. Important 
progress was obtained in particular in \cite{hitchin:74}, \cite{baer:96b}, \cite{baer:97b} and \cite{baer.dahl:02}.
The $\mathrm{KO}_m({\rm pt})$-valued index obviously gives a lower bound on the dimension of the kernel. We say, that a Riemannian metric~$g$ is 
\emph{$\Dir$-minimal} if this lower bound is attained.

At first one knows that generic metrics are $\Dir$-minimal.
\begin{theorem}[\cite{ammann.dahl.humbert:09}]
Let $M$ be a connected compact spin manifold. Then the set of $\Dir$-minimal Riemannian metrics is open and dense in the set of all Riemannian metrics in the $C^k$-topology for any $k\in\{1,2,3,\ldots\}\cup\{\infty\}$.
\end{theorem}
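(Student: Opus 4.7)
The proof splits naturally into openness and density. For openness, I would first observe that the $\alpha$-invariant provides a topological lower bound $a := \dim_\R\ker^{\mathrm{min}}\Dir$ on $\dim_\R \ker\Dir^g$ which is independent of the metric $g$, so the set of $\Dir$-minimal metrics coincides with $\{g\,:\,\dim_\R\ker\Dir^g\le a\}$. It therefore suffices to show that $g\mapsto \dim_\R\ker\Dir^g$ is upper semi-continuous in the $C^1$-topology. Since the spinor bundle $\Sigma_g M$ depends on $g$, one first identifies $\Sigma_g M$ with a fixed reference bundle $\Sigma_{g_0} M$ via the Bourguignon--Gauduchon isometry, which transforms $\Dir^g$ into a family of formally self-adjoint elliptic operators on a fixed Hilbert space depending continuously on $g$. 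Upper semi-continuity of $\dim\ker$ for such a family then follows from standard spectral-theoretic facts: the spectral projection onto a small disc around $0$ depends continuously on the operator, so the number of eigenvalues (with multiplicity) close to $0$ can only decrease under small perturbations.

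For density, the plan is as follows. Fix a metric $g_0$ with $\dim_\R\ker\Dir^{g_0}>a$ and an arbitrary $C^k$-neighborhood $\mathcal{U}$ of $g_0$; we must produce a $\Dir$-minimal metric in $\mathcal{U}$. Using the identification above, write $\Dir^{g_0+th}=\Dir^{g_0}+tA(h)+O(t^2)$ for a suitable first-order operator $A(h)$ depending linearly on the symmetric $2$-tensor $h$. For a one-parameter analytic family $t\mapsto g_t=g_0+th$, the Kato--Rellich theory yields analytic branches of eigenvalues emanating from $0$; the algebraic count forced by the index shows that exactly $a$ of these branches must remain identically zero, while the other $\dim_\R\ker\Dir^{g_0}-a$ branches are killed by the perturbation provided the first-order variation $A(h)$ is non-degenerate on the complement of the index-forced kernel inside $\ker\Dir^{g_0}$.

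The core of the argument is therefore to construct, for every surplus harmonic spinor $\varphi\in \ker\Dir^{g_0}$ not lying in the index-forced subspace, a metric variation $h$ with $\langle A(h)\varphi,\varphi\rangle_{L^2}\neq 0$ (or, more generally, such that the symmetric bilinear form induced by $A(h)$ on the surplus kernel is non-degenerate). Concretely, one writes out $A(h)$ explicitly in terms of Clifford multiplication with $h$ and its covariant derivative, and then chooses $h$ supported in a small open set where $\varphi$ is non-zero. This is always possible because the Dirac operator has the weak unique continuation property, so the zero set of $\varphi$ has empty interior. Iterating this, one removes the surplus kernel one dimension at a time until the kernel dimension equals $a$, and the resulting metric can be arranged to lie in $\mathcal{U}$ by choosing the perturbation small.

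The main obstacle is the last step: producing a perturbation $h$ that simultaneously reduces the surplus kernel and respects the $C^k$-closeness to $g_0$. The delicate point is to ensure that the bilinear form $(\varphi,\psi)\mapsto \langle A(h)\varphi,\psi\rangle_{L^2}$ on the surplus kernel can be made non-degenerate by a single choice of $h$, which requires a transversality-type argument based on the explicit local formula for $A(h)$ together with weak unique continuation. Once this key perturbation lemma is in place, density follows by induction on the excess $\dim_\R\ker\Dir^{g_0}-a$, and combined with openness this completes the proof.
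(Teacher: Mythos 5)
Your proposal takes a genuinely different route from the cited reference, and it contains a gap that makes the density half fail as written. Ammann, Dahl and Humbert do not argue by first-order perturbation of the metric: the technical heart of \cite{ammann.dahl.humbert:09} (like that of the earlier simply connected case \cite{baer.dahl:02}) is a \emph{surgery theorem} controlling $\dim\ker\Dir$ under surgeries of suitable codimension while leaving the metric unchanged outside a small neighbourhood of the surgery sphere, combined with spin-bordism theory for $\Omega_*^{\mathrm{Spin}}(B\pi_1(M))$ and explicit $\Dir$-minimal generators; density is extracted from the locality of the surgery construction. Your openness argument via upper semicontinuity of $\dim\ker$ after the Bourguignon--Gauduchon identification is correct and is the standard one.

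The gap sits at the key transversality claim: that for every surplus $\varphi\in\ker\Dir^{g_0}$ one can choose a variation $h$ with $\langle A(h)\varphi,\varphi\rangle_{\mathrm{L}^2}\neq0$, ``always possible because of weak unique continuation.'' This is false. After the Bourguignon--Gauduchon identification, the first variation of the eigenvalue branch through $\varphi\in\ker\Dir^{g_0}$ along $g_0+th$ equals, up to a universal constant,
\begin{equation*}
\int_M\langle h,T_\varphi\rangle\,dv, \qquad T_\varphi(X,Y)=\tfrac12\,\mathrm{Re}\,\langle X\cdot\nabla_Y\varphi+Y\cdot\nabla_X\varphi,\varphi\rangle,
\end{equation*}
where $T_\varphi$ is the energy--momentum tensor of $\varphi$. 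If $\varphi$ is a \emph{parallel} spinor, then $T_\varphi\equiv0$ and $\langle A(h)\varphi,\varphi\rangle_{\mathrm{L}^2}=0$ for \emph{every} $h$; more generally, by polarization the Hermitian form $Q_h$ vanishes identically on any subspace of parallel spinors. Weak unique continuation says that $\varphi$ is nonzero on a dense open set, but the obstruction here is the vanishing of $\nabla\varphi$, not of $\varphi$, so it does not help. This degeneracy occurs in concrete cases covered by the theorem: on the flat torus $\mathbb{T}^n$, $n\geq3$, with its Lie-group spin structure, $\ker\Dir$ consists precisely of the $2^{[n/2]}$ parallel spinors, while $\al(\mathbb{T}^n)=\eta^n=0$ in $\mathrm{KO}_n(\mathrm{pt})$, so the index-theoretic lower bound is $0$ and the flat metric is far from $\Dir$-minimal --- yet $Q_h$ vanishes identically on $\ker\Dir$ for every $h$, so your inductive killing of the surplus kernel cannot even begin. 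Repairing this requires a higher-order analysis or a different mechanism entirely, which is exactly why the cited proof is built on surgery rather than on first-order perturbation.
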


The theorem was already proved in~\cite{baer.dahl:02} for simply connected manifolds of dimension at least $5$. There is also a stronger 
version of the theorem in \cite{ammann.dahl.humbert:p09} which states that any metric can be perturbed in an arbitrarily small 
neighborhood to a $\Dir$-minimal one.

In the case $m=2$ the complex dimension of the kernel of $\Dir$ is at most $g+1$ where $g$ is the genus 
of the surface $M$ \cite[Prop.~2.3]{hitchin:74}. This upper bound determines the dimension of the kernel of $\Dir$ in the case $g\leq 2$, $\al(M)=0$ 
and in the case $g\leq 4$, $\al(M)=1$. The dimension of the kernel can also be explicitly determined for hyperelliptic surfaces
\cite{baer.schmutz:92}. In the same article the dimension of the kernel is calculated for special non-hyperelliptic surfaces of genus $4$ and $6$.
However, for general surfaces of sufficiently large genus, the dimension of the kernel is unknown.

In contrast to $m=2$, it is conjectured that for $m\geq 3$ there is no topological quantity that yields an upper bound on $\dim \ker \Dir$.
\begin{conjecture}\cite[p.~941]{baer:96b}
On any non-empty compact spin manifold of dimension $m\geq 3$, there is a sequence of Riemannian metrics~$g_i$
with 
  $$\lim_{i\to \infty}\dim\ker\Dir^{g_i}=\infty.$$  
\end{conjecture}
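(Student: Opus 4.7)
My plan is a \textbf{build-and-transport} strategy: first construct on an auxiliary spin manifold a metric with arbitrarily large Dirac kernel, then deform it via surgery to the given $M$. For Step 1 (the building block), fix any compact spin $m$-manifold $M_0$ carrying a metric $h_0$ with $\dim\ker\Dir^{h_0}\geq 1$ -- for example $M_0 = S^1\times X^{m-1}$ with $X$ admitting a parallel spinor and $S^1$ equipped with the non-bounding spin structure, so that the Kaluza-Klein-type decomposition of the Dirac operator along the $S^1$ fibres produces a nontrivial kernel. Form the $k$-fold connected sum $N_k := M_0 \# \cdots \# M_0$ with very small connecting necks. Because connected sum is surgery in codimension $m\geq 3$, the Bär--Dahl-type surgery result (lower semicontinuity of $\dim\ker\Dir$ under codimension-$\geq 3$ surgery in the limit of shrinking necks) yields a metric $h_k$ on $N_k$ with $\dim\ker\Dir^{h_k}\geq k$, provided the neck parameters are small enough.

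For Step 2 (transport), recall that by Wall's classification, any two closed spin $m$-manifolds in the same spin cobordism class differ by a sequence of surgeries in codimensions between $3$ and $m-2$. After taking a further connected sum of $N_k$ with a fixed manifold (chosen once and for all, independent of $k$) so that its spin-cobordism class matches that of $M$, a finite sequence of surgeries of codimension $\geq 3$ produces $M$ itself. Iterated application of the Bär--Dahl surgery result then transports the kernel: each surgery preserves the lower bound on $\dim\ker\Dir$ up to a bounded deficit depending only on the surgery and not on $k$. This yields a sequence of metrics $g_k$ on $M$ with $\dim\ker\Dir^{g_k}\geq k-C$ for some constant $C$ independent of $k$, hence a sequence along which the kernel dimension tends to infinity.

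The main obstacle is \emph{quantitative} control of the kernel under the surgeries in Step 2. The semicontinuity results are inequalities, and a priori the kernel could drop by an amount that grows with $k$, destroying the argument. Overcoming this requires engineering $h_k$ so that the $k$-dimensional subspace of harmonic spinors has support concentrated far from the regions where surgery is performed; one would rescale the pieces $M_0$ to be geometrically large compared to the surgery handles, and use finite-propagation or decay estimates to show the harmonic spinors localise on the pieces. A further delicate point is that the cobordism path from $N_k$ to $M$ may a priori demand codimension-$2$ surgeries, where the Bär--Dahl technology breaks down; avoiding these by a careful choice of intermediate manifolds, and ensuring the whole path consists of codimension-$\geq 3$ moves, is in my view the hardest technical step.
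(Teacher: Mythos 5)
This statement is a \emph{conjecture}, not a theorem of the paper; the paper explicitly records that it is ``known only in few cases,'' the example being Hitchin's sequence of Berger metrics on $S^3$. The authors do not prove it, so there is no ``paper's own proof'' to compare against. Your proposal therefore cannot be a correct proof, and in fact the two obstacles you yourself flag at the end are precisely the reasons the conjecture has remained open.

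Concretely, the \textbf{build} step already fails. The B\"ar--Dahl and Ammann--Dahl--Humbert surgery theorems (for a surgery of codimension $\geq 3$ with shrinking neck) control the low-lying spectrum, not the kernel: the eigenvalues of $\Dir$ on the surgered manifold converge, with multiplicity, to those of the original manifold on any fixed interval $(-\Lambda,\Lambda)$. A $k$-fold zero eigenvalue on $M_0\amalg\cdots\amalg M_0$ therefore becomes $k$ eigenvalues of $\Dir^{h_k}$ on $N_k$ lying in $(-\epsilon,\epsilon)$, but nothing prevents them from being small and \emph{nonzero}. There is no known mechanism that forces them to remain exactly $0$; on the contrary, the theorem that $\Dir$-minimal metrics are $C^\infty$-dense (stated in the paper and extended to all compact spin manifolds by Ammann--Dahl--Humbert) shows that for a generic choice of the gluing data the kernel drops to the topological minimum dictated by the $\alpha$-genus. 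Since $\alpha$ is bordism-invariant, $\alpha(N_k)=k\,\alpha(M_0)$; if $\alpha(M_0)=0$ (as for $S^1\times X$ with $\wihat A(X)=0$) the generic kernel on $N_k$ is already $0$, so ``small necks'' give no lower bound at all. The same problem then recurs at every surgery in the transport step, and the deficit cannot be bounded uniformly in $k$ because the harmonic spinors do not localise: the relevant estimates are on resolvents near $0$, not on exact kernel elements.

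Beyond this, the codimension issue in Step~2 is genuinely unresolved: the path of surgeries linking two spin-cobordant manifolds (even after stabilising by a fixed connected summand) cannot in general be taken to consist solely of codimension $\geq 3$ moves without extra hypotheses (simple connectivity and $m\geq 5$ is what B\"ar--Dahl assume, and their argument does not extend verbatim). In short, the strategy is a reasonable heuristic but every load-bearing inequality points the wrong way; this is exactly the state of the art, and the statement should be treated as open rather than provable by current surgery techniques.
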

The conjecture is known only in few cases as e.g. the $3$-sphere $S^3$ where Hitchin \cite{hitchin:74}
has shown that a sequence of Berger metrics
provides such a sequence of metrics.

In particular, the conjecture would include that any manifold with $m\geq 3$ carries a non-$\Dir$-minimal metric.
The latter statement is partially known to be true. It was shown in \cite{hitchin:74} for $m\equiv 7,0,1\;(8)$ and in 
\cite{baer:96b} for $m\equiv 3,7\;(8)$ that any compact $m$-dimensional spin manifold carries a metric with $\ker\Dir\neq 0$.
A similar statement is known for all spheres of dimension~$m\geq 4$ \cite{seegerdiss} and \cite{dahl:08}.

However it is still
unknown whether an arbitrary compact spin manifold of dimension $m\equiv 2,4,5,6\;(8)$ admits a non-$\Dir$-minimal metric.

\section{Notation and Preliminaries}\label{sec.prelim}

\subsection{Notation and Conventions}

The Riemannian manifolds $M$ and $N$ are always assumed to be compact 
without boundary, $m=\dim M$, $n= \dim N$. We assume that $M$ is spin
and --- unless stated otherwise ---
we always assume that one of the spin structures on $M$ is fixed. 
The (untwisted) spinor bundle on $M$ is denoted by $\Si M$, 
the fiber over $x\in M$ is denoted by $\Si_x M$. In this article
we consider complex spinors, \ie, $\Si M$ is a complex vector bundle
of rank $2^{[\frac{m}{2}]}$; similar statements also hold for real spinors.
The untwisted spinor bundle carries a Hermitian metric, 
a metric connection and a Clifford multiplication. 
For an introduction to these
structures we refer to textbooks on spin geometry such as
\cite{lawson.michelsohn:89,friedrich:00,hijazi:01}. These structures
allow to define an (untwisted) Dirac operator 
$\Dir:C^\infty(M,\Sigma M)\lra C^\infty(M,\Sigma M)$.

If $E$ is a real vector bundle with given metric and metric connection, we 
define the twisted spinor bundle as $\Si M\otimes E$, 
the tensor bundle being taken over~$\mR$.
The twisted spinor bundle carries similar structures as the untwisted one.
Sections of  $\Si M\otimes E$ will be called \emph{spinors} or 
\emph{spinors twisted by $E$}.
One uses the structures on  $\Si M\otimes E$ to define the twisted 
Dirac operator 
  $$\Dir^E:C^\infty(M,\Sigma M\otimes E)\lra C^\infty(M,\Sigma M\otimes E).$$ 
It is a first order elliptic and 
selfadjoint differential operator. 
Its spectrum is thus discrete, real and of finite multiplicity.

In this article $E$ will often be obtained as $E=f^*TN$ where $f:M\to N$ is 
smooth. Here $E$ is equipped with the pull-back of the metric 
and the Levi-Civita connection 
on~$TN$. In this case we simply write $\Dir^f$ for $\Dir^{f^*TN}$.

Our convention for curvature tensors is $R_{X,Y}^N=[\nabla_X^N,\nabla_Y^N]-\nabla_{[X,Y]}^N$, and $R$ is then considered as an element in $\Hom(TN\otimes TN\otimes TN,TN)$ through $R(X\otimes Y\otimes Z)=R(X,Y)Z$.

The sphere $S^n$ with its standard metric (of constant sectional curvature $1$) will be denoted as $\mS^n$. 

The complex projective space $\mC \mathrm{P}^n$ will be always equipped with its Fubini-Study metric of holomorphic
sectional curvature $4$. The tautological bundle on $\mC \mathrm{P}^n$ is denoted by $\ga_n$.

\subsection{Linear and non-linear energy functional}\label{subsec.variants}

As mentioned in the introduction, the fermionic energy functional is discussed in a linear and non-linear version 
in the literature. We will give some details now. For a concise introduction see~\cite{Freed5lect} or \cite[Chapter~3]{deligne.freed:99}.
 
In order to derive in terms of physically motivated conclusions the appropriate fermionic energy, one reformulates 
the classical energy using the structure of the category of all vector spaces. Then one replaces the category of vector spaces
by the category of super-vector spaces. 
One obtains a functional
  $$L_c(f,\Phi):=\frac12 \int_M (|df|^2 + \<\Phi,\Dir^f\Phi\> + \frac16 
\<\Phi,\cR (\Phi,\Phi)\Phi\>)\,dv^M.$$
Here $\cR$ is a section of 
$\Hom_\mC((\Si M\otimes_\mR TN)\otimes_\mC (\overline{\Si M}\otimes_\mR TN)\otimes_\mC (\Si M\otimes_\mR TN),\Si M\otimes_\mR TN)$ obtained by tensoring the Riemann curvature tensor
$R\in \Hom_\mR(TN\otimes_\mR TN\otimes_\mR TN,TN)$ with the spinorial contraction map
$\id\otimes \<\,.\,\>\in \Hom_\mC(\Si M \otimes_\mC \overline{\Si M}\otimes_\mC \Si M,\Si M)$, $\id\otimes \<\,.\,\>(\varphi_1,\varphi_2,\varphi_3)=\<\varphi_2,\varphi_3\>\varphi_1$.
This functional is often called the fermionic energy functional with curvature term, and stationary points of this functional are called 
\emph{Dirac-harmonic maps with curvature term}. However this functional makes analytical con\-si\-de\-ra\-tions involved, and thus
 only few analytical articles include this curvature term, e.\thinspace g.\ \cite{chen.jost.wang:p07a} and \cite{ChenJostWang07}. 
In \cite{isobe:11} interesting techniques are developed which might turn helpful to find solutions of the non-linear equation.

As the curvature term is of fourth order in $\Phi$ whereas the dominating term is quadratic on $\Phi$, it seems acceptable from the view point of physical applications to neglect the curvature term. The fermionic energy functional thus obtained
  $$L(f,\Phi):= \frac12 \int_M (|df|^2 + \<\Phi,\Dir^f\Phi\>)\,dv^M\in \mR,$$
is analytically much easier to study.
A pair $(f_0,\Phi_0)$ is a stationary point of $L$ if and only if it satisfies 
$\Dir^{f_0}\Phi_0=0$ and $\mathrm{tr}_g(\nabla df_0)=\frac{V_{\Phi_0}}{2}$, 
see Proposition \ref{p:critpointL1} below. Stationary points of this 
functional are called \emph{Dirac-harmonic maps}.
Starting with \cite{ChenJostLiWang06}, this functional has been intensively studied in the literature, see Subsection~\ref{sec.previous}.

The method developed in the present article applies to this linear version.

\section{Index theory of Dirac-harmonic maps}\label{sec.index}

Let $M$ be a closed $m$-dimensional Riemannian spin manifold with spin structure denoted by $\chi$.
Let $E\lra M$ be a Riemannian (real) vector bundle with metric connection.
Then one can associate to the twisted Dirac operator $\Dir^E:C^\infty(M,\Sigma M\otimes E)\lra C^\infty(M,\Sigma M\otimes E)$ an index 
$\al(M,\chi,E)\in \mathrm{KO}_m(\mathrm{pt})$ (see \eg \cite[p.151]{lawson.michelsohn:89}).
Using the isomorphism \cite[p.141]{lawson.michelsohn:89}
\begin{equation}\label{eq:KOm}
\mathrm{KO}_m(\mathrm{pt})\cong \left\{\begin{array}{ll}\mathbb{Z}&\textrm{ if }m\equiv 0\;(4)\\ \mathbb{Z}_2&\textrm{ if }m\equiv 1,2\;(8)\\0&\textrm{ otherwise,}\end{array}\right. 
\end{equation}
the index $\al(M,\chi,E)$ will be identified either with an integer 
or an element in the group
$\mathbb{Z}_2$ of integers modulo $2$.
We also say that $\al(M,\chi,E)$ is the \emph{$\al$-genus} of $E\lra M$.

The $\al$-genus can be easily determined out of $\ker \Dir^E$ using 
the following formula \cite[Thm. II.7.13]{lawson.michelsohn:89}:
\[\al(M,\chi,E)=\left\{\begin{array}{ll} 
\{\mathrm{ch}(E)\cdot\wih{A}(TM)\}[M]&\textrm{ if }m\equiv 0\;(8).\\ 
\left[\mathrm{dim}_{\C}(\mathrm{ker}(\Dir^E))\right]_{\mathbb{Z}_2}&\textrm{ if }m\equiv 1\;(8)\\ 
{}[\frac{\mathrm{dim}_{\C}(\mathrm{ker}(\Dir^E))}{2}]_{\mathbb{Z}_2}&\textrm{ if }m\equiv 2\;(8)\\ 
\frac{1}{2}\{\mathrm{ch}(E)\cdot\wih{A}(TM)\}[M]&\textrm{ if }m\equiv 4\;(8)\\ 
\end{array}\right.\]

As usual $\mathrm{ch}(E)$ is to be understood as the Chern character of $E\otimes_\R\C$.
In dimensions $1$ and $2$ modulo $8$ the $\al$-genus depends on the spin 
structure~$\chi$ on~$M$, however we often simply 
write $\al(M,E)$, when $\chi$ is clear from
the context. We will assume from now on and until the end of 
Section~\ref{sec.nonmin} 
that $M$ comes with a fixed orientation and spin structure, so we 
omit the notation $\chi$ in those sections.

The number $\al(M,E)$ is a spin-bordism-invariant, 
where a spin-bordism for manifold with vector bundles means 
that the restriction of a vector bundle to the boundary of the spin manifold must coincide 
with the vector bundle given on the boun\-da\-ry: namely, 
\cite[Thm. II.7.14]{lawson.michelsohn:89} implies that $\al$ defines 
a map $\Omega_*^{\mathrm{Spin}}(\mathrm{BO})\lra\mathrm{KO}_*(\mathrm{pt})$.
Note in particular that $\al(M,E)$ does depend neither on the metric 
nor on the connection chosen on $E\lra M$.
In case that $E$ is the trivial real line bundle 
$\underline{\R}:=\R\times M\lra M$, the number $\al(M,E)=:\al(M)$ 
is the classical 
$\al$-genus of $M$.\\

\begin{definition}\label{d:alpha0}
Let $M$ be a closed $m$-dimensional Riemannian spin manifold and $f:M\to N$ be a smooth map into an $n$-dimensional Riemannian manifold.
The \emph{$\al$-genus} of $f$ is $\al(M,f):=\al(M,f^*TN)$.
\end{definition}

The spin-bordism-invariance of the $\al$-genus has an important consequence for $\al(M,f)$.
We first make a

\begin{definition}\label{d:spinbordantmaps}
With the notations of {\rm Definition \ref{d:alpha0}}, two maps $f_1:M_1\to N$ and $f_2:M_2\to N$ are \emph{spin-bordant} in $N$,
if there is a spin manifold $W$ of dimension $m+1$ together with a map $F:W\to N$
such that $\pa W= -M_1\amalg M_2$ (in the sense of manifolds with spin structure) and such that 
$F_{|_{M_j}}=f_j$ for both $j=1,2$.
\end{definition}

Obviously, given any smooth spin-bordant maps $f_j:M_j\lra N$, $j=1,2$, the pull-back bundles $f_j^*(TN)\lra M_j$ are spin-bordant as vector bundles.
Therefore, we obtain the 

\begin{proposition}\label{p:alphaspinbordisminv}
Assume that $f_1:M_1\to N$ and $f_2:M_2\to N$ are spin-bordant in $N$.
Then $\al(M_1,f_1)=\al(M_2,f_2)$.
\end{proposition}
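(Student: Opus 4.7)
The plan is to unpack what the authors have already flagged as ``obvious'' and then invoke the bordism invariance of $\al$ that was stated just before the proposition, namely that $\al$ factors through a homomorphism $\Om_*^{\mathrm{Spin}}(\mathrm{BO})\to \mathrm{KO}_*(\mathrm{pt})$.

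First I would start from a given spin-bordism: let $W$ be a compact spin manifold of dimension $m+1$ with $\pa W=-M_1\amalg M_2$ and with a smooth map $F:W\to N$ extending $f_1\amalg f_2$. Pulling back the tangent bundle of $N$, I obtain a real vector bundle $E:=F^*TN\to W$, which I equip with the pulled-back fibre metric and the pulled-back Levi-Civita connection. By construction the restriction $E|_{M_j}$ is isometrically isomorphic to $f_j^*TN$ as a Riemannian vector bundle with metric connection, since $F|_{M_j}=f_j$.

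Next I would interpret this data as a bordism in the correct classifying space. A Riemannian vector bundle of rank $n$ over $W$ is classified (up to isomorphism) by a homotopy class of maps $W\to \mathrm{BO}(n)\subset \mathrm{BO}$, and the spin structure on $W$ provides the class $[W,E]\in \Om_{m+1}^{\mathrm{Spin}}(\mathrm{BO})$. Restricting this cycle to the boundary gives precisely $[M_2,f_2^*TN]-[M_1,f_1^*TN]$ in $\Om_m^{\mathrm{Spin}}(\mathrm{BO})$, so the two cycles represent the same spin-bordism class.

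Finally, as recalled in the excerpt just before the proposition, the assignment $[M,E]\mapsto\al(M,E)$ defines a homomorphism $\Om_*^{\mathrm{Spin}}(\mathrm{BO})\to \mathrm{KO}_*(\mathrm{pt})$ (this is the content of \cite[Thm.~II.7.14]{lawson.michelsohn:89}); in particular $\al$ vanishes on boundaries. Applied to the cycle $[W,E]$ this yields
\[
\al(M_2,f_2^*TN)-\al(M_1,f_1^*TN)=0,
\]
which is the desired equality $\al(M_1,f_1)=\al(M_2,f_2)$.

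There is essentially no analytic obstacle here; the only points requiring a little care are keeping track of the orientations (so that the boundary contribution really is $M_2$ minus $M_1$ in $\Om_*^{\mathrm{Spin}}(\mathrm{BO})$) and recalling that the $\al$-genus depends only on the underlying real vector bundle (which is why pulling back with any connection is harmless). Neither of these is a genuine difficulty, so the proposition is really a direct consequence of the bordism invariance stated in the paragraph preceding it.
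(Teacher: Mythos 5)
Your proof is correct and follows the same route the paper takes: pull back $TN$ along the bordism map $F$ to get a vector bundle over $W$ restricting to $f_j^*TN$ on the two boundary components, interpret this as a relation in $\Om_*^{\mathrm{Spin}}(\mathrm{BO})$, and invoke the bordism invariance of $\al$ from \cite[Thm.~II.7.14]{lawson.michelsohn:89}. You merely spell out the steps the paper compresses into the sentence preceding the proposition.
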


Since a homotopy between maps $f_j:M\lra N$ defines a spin-bordism between them, we deduce the

\begin{corollary}\label{c:alphahomotinv}
Assume that $f_1:M\to N$ and $f_2:M\to N$ are homotopic maps.
Then  $\al(M,f_1)=\al(M,f_2)$.
\end{corollary}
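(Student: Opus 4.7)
The plan is to produce an explicit spin-bordism from $f_1$ to $f_2$ out of the given homotopy, and then invoke Proposition~\ref{p:alphaspinbordisminv}. After approximating the homotopy by a smooth one (using the standard smoothing result for maps between smooth manifolds), I have a smooth map $H:M\times[0,1]\to N$ with $H(\cdot,0)=f_1$ and $H(\cdot,1)=f_2$. I would take $W:=M\times[0,1]$ as the cobordism and $F:=H$ as the map.

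First I would equip $W$ with the product spin structure. Since $[0,1]$ is parallelizable, it carries a canonical spin structure, and the product spin structure on $M\times[0,1]$ refines the product orientation and product Riemannian metric. This makes $W$ a compact $(m+1)$-dimensional spin manifold with boundary.

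Second I would identify $\partial W$ with $-M\amalg M$ as spin manifolds. As an oriented manifold, the boundary is $M\times\{0\}\amalg M\times\{1\}$, and with the outward-normal-first convention the component $M\times\{1\}$ inherits the orientation of $M$, while $M\times\{0\}$ inherits the opposite orientation. On each boundary component the normal direction is trivialised (by $\pm\partial_t$), so the induced spin structure on each copy agrees with the given spin structure $\chi$ on $M$. Hence $\partial W\cong -M\amalg M$ as spin manifolds, and by construction $F|_{M\times\{0\}}=f_1$, $F|_{M\times\{1\}}=f_2$. This exhibits $f_1$ and $f_2$ as spin-bordant in $N$ in the sense of Definition~\ref{d:spinbordantmaps}, so Proposition~\ref{p:alphaspinbordisminv} yields $\alpha(M,f_1)=\alpha(M,f_2)$.

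I do not expect a serious obstacle here; the only bookkeeping issue is that the two boundary components carry opposite induced orientations, which matches exactly the sign convention in Definition~\ref{d:spinbordantmaps}. The smoothing step is standard and deserves at most a one-line remark.
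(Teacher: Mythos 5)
Your proof is correct and is exactly the argument the paper uses (the paper states it in one line: a homotopy defines a spin-bordism, then Proposition~\ref{p:alphaspinbordisminv} applies); you have merely spelled out the standard details about smoothing, the product spin structure on $M\times[0,1]$, and the induced boundary orientations.
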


Because of this homotopy invariance we also write $\al(M,[f])$ for 
$\al(M,f)$, where $[f]$ is the homotopy class of $f$.

\section{Variational formulas for the fermionic energy functional}\label{sec.var}

We recall the following formulas for the first variation of $L$ \cite[Prop. 2.1]{ChenJostLiWang06}.

\begin{proposition}\label{p:critpointL1}
For $(f_0,\Phi_0)\in C^\infty(M,N)\times C^\infty(M,\Sigma M\otimes f_0^*TN)$ and some $\varepsilon>0$ let $(f_t,\Phi_t)_{t\in]-\varepsilon,\varepsilon[}$ be a variation of $(f_0,\Phi_0)$ which is differentiable at $t=0$. 
Denote by $\Dir_t:=\Dir^{f_t}$ for all $t\in]-\varepsilon,\varepsilon[$.
Then
\begin{equation}\label{eq:dDirdt}\frac{d}{dt}\langle\Dir_t\Phi_t,\Phi_t\rangle_{|_{t=0}}=h(V_{\Phi_0},\frac{\partial f}{\partial t}(0))+\langle\Dir_0\frac{\partial\Phi}{\partial t}(0),\Phi_0\rangle+\langle\Dir_0\Phi_0,\frac{\partial\Phi}{\partial t}(0)\rangle\end{equation}
where $h(V_{\Phi_0},Y)=\sum_{j=1}^m\langle e_j\cdot\otimes R_{Y,df_0(e_j)}^N\Phi_0,\Phi_0\rangle$ for all $Y\in TN$ with basepoint $f(x)\in N$
and every orthonormal frame $(e_j)_{1\leq j\leq m}$ of $T_xM$.
In par\-ti\-cu\-lar,
\[\frac{d}{dt}L(f_t,\Phi_t)_{|_{t=0}}=\int_M \Re e(\langle\Dir_0\Phi_0,\frac{\partial\Phi}{\partial t}(0)\rangle)-h\left(\mathrm{tr}_g(\nabla df_0)-\frac{V_{\Phi_0}}{2},\frac{\partial f}{\partial t}(0)\right)\,dv^M.\]
\end{proposition}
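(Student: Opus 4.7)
The plan is to differentiate under the integral and reduce the problem to a standard calculation by trivializing the family of twisted bundles along $t$. I would introduce the map $F\colon M\times(-\ep,\ep)\to N$ given by $F(x,t):=f_t(x)$ and consider its pull-back bundle $F^*TN\to M\times(-\ep,\ep)$ equipped with the induced metric and connection $\widehat{\nabla}$. Parallel transport along the curves $t\mapsto (x,t)$ provides bundle isometries $f_0^*TN\to f_t^*TN$ for small $t$; after tensoring with the identity on $\Sigma M$, we can view each $\Phi_t$ as a $t$-dependent section of the fixed bundle $\Sigma M\otimes f_0^*TN$, with derivative at $t=0$ equal to $\frac{\partial\Phi}{\partial t}(0)$, and regard $\Dir_t$ as a $t$-dependent first-order operator on this fixed space of sections.

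The core computation is then $\frac{d}{dt}\big|_{t=0}\Dir_t$. Writing $X:=\frac{\partial f}{\partial t}(0)\in\Gamma(f_0^*TN)$, and using that trivializing sections are $\partial_t$-parallel and that $[\partial_t,e_j]=0$, the curvature identity $[\widehat\nabla_{\partial_t},\widehat\nabla_{e_j}]=R^N(dF(\partial_t),dF(e_j))$ gives, at $t=0$,
\[
\frac{d}{dt}\bigg|_{t=0}\nabla^t_{e_j}=R^N\bigl(X,\,df_0(e_j)\bigr),
\]
acting as an endomorphism of the $f_0^*TN$-factor. Since Clifford multiplication and the spin connection on $\Sigma M$ are $t$-independent, summing over $j$ yields
\[
\frac{d}{dt}\bigg|_{t=0}(\Dir_t\Phi_0)=\sum_{j=1}^m e_j\cdot\otimes R^N\bigl(X,df_0(e_j)\bigr)\Phi_0.
\]
Pairing with $\Phi_0$, the right-hand side is $h(V_{\Phi_0},X)$ by the very definition of $V_{\Phi_0}$, and the Leibniz rule in $t$ applied to $\langle\Dir_t\Phi_t,\Phi_t\rangle$ produces in addition the two terms $\langle\Dir_0\tfrac{\partial\Phi}{\partial t}(0),\Phi_0\rangle$ and $\langle\Dir_0\Phi_0,\tfrac{\partial\Phi}{\partial t}(0)\rangle$; this is exactly \eqref{eq:dDirdt}.

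For the variational formula for $L$ itself, I would combine the identity above with the classical first variation of the Dirichlet energy,
\[
\tfrac12\frac{d}{dt}\int_M|df_t|^2\,dv^M\bigg|_{t=0}=-\int_M h\bigl(\mathrm{tr}_g(\nabla df_0),X\bigr)\,dv^M,
\]
and use the self-adjointness of $\Dir_0$ to rewrite
\[
\int_M\langle\Dir_0\tfrac{\partial\Phi}{\partial t}(0),\Phi_0\rangle\,dv^M=\overline{\int_M\langle\Dir_0\Phi_0,\tfrac{\partial\Phi}{\partial t}(0)\rangle\,dv^M},
\]
so that after dividing by $2$ the spinorial contribution collapses to $\int_M\Re e\langle\Dir_0\Phi_0,\frac{\partial\Phi}{\partial t}(0)\rangle\,dv^M$. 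Collecting the terms linear in $X$ then produces the stated formula.

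The step I expect to require the most care is the derivation of $\frac{d}{dt}|_{t=0}\Dir_t$: one has to verify that the parallel-transport trivialization genuinely turns the $t$-family of twisted spin-Dirac operators into a smoothly $t$-dependent operator on a single fixed bundle, and that the derivative of the connection is captured by the curvature pull-back alone, with no extra correction from the spin part (which is constant in $t$ in this trivialization). Once this is set up cleanly, the remaining ingredients — the harmonic-map variation, the integration by parts via self-adjointness, and the reality of $h(V_{\Phi_0},X)$ (which follows from the $\mathfrak{so}(n)$-valued endomorphism $R^N(X,df_0(e_j))$ commuting with the skew-Hermitian Clifford action on $\Sigma M$) — are routine.
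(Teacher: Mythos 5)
Your proof is correct. Note that the paper itself does not supply a proof of this proposition --- it cites \cite[Prop. 2.1]{ChenJostLiWang06} --- but your argument uses exactly the parallel-transport trivialization $\beta_t$ that the paper introduces in the remarks immediately following the statement, and the key step (that $\frac{d}{dt}\big|_{t=0}\wih{\Dir}_t=\sum_j e_j\cdot\otimes R^N_{X,df_0(e_j)}$, obtained from the curvature of the pull-back connection on $F^*TN$ via a $\partial_t$-parallel frame, with no spinorial correction since the spin connection is $t$-independent) is the right computation; the Leibniz rule, self-adjointness of $\Dir_0$, and the classical Dirichlet-energy variation then close the argument as you describe. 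One small imprecision in your final remark: the reality of $h(V_{\Phi_0},X)$ does not follow from the two endomorphisms \emph{commuting} (they act on different tensor factors and commute trivially), but from the fact that the real tensor product of a skew-Hermitian operator (Clifford multiplication by $e_j$) with a skew-symmetric operator ($R^N_{X,df_0(e_j)}\in\mathfrak{so}(T_{f(x)}N)$) is Hermitian, so $\langle e_j\cdot\otimes R^N_{X,df_0(e_j)}\Phi_0,\Phi_0\rangle\in\mR$.
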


The linear endomorphism  
$e_j\cdot\otimes R_{Y,df_0(e_j)}^N\in \End(\Si_xM\otimes T_{f(x)}N)$
is obtained by tensoring Clifford multiplication $\varphi\mapsto e_j\cdot \varphi$
with $R_{Y,df_0(e_j)}^N\in \End(T_{f(x)}N)$. It is thus the real tensor product of a skew-hermitian endomorphism
with a skew-symmetric endomorphism, and thus a hermitian endomorphism. In particular the expression
$\langle e_j\cdot\otimes R_{Y,df_0(e_j)}^N\Phi_0,\Phi_0\rangle$ is real.
 
The dif\-fe\-ren\-tia\-bi\-li\-ty of $t\mapsto\Phi_t$ at $t=0$ is to be understood as follows.
Denote by $f:]-\varepsilon,\varepsilon[\times M\lra N$, $(t,x)\mapsto f(t,x):=f_t(x)$ the variation above.
The metric and the Levi-Civita connection on $TN$ induce a metric and a metric connection on the pull-back bundle $f^*TN\lra]-\varepsilon,\varepsilon[\times M$.
For every $t\in]-\varepsilon,\varepsilon[$ let $\beta_t:C^\infty(M,\Sigma M\otimes f_0^*TN)\lra C^\infty(M,\Sigma M\otimes f_t^*TN)$ be the unitary and pa\-ral\-lel isomorphism induced by the pa\-ral\-lel transport along the curves $[0,t]\rightarrow ]-\varepsilon,\varepsilon[\times M$, $s\mapsto (s,x)$, where $x$ runs in $M$.
Then we require the map $t\mapsto\wih{\Phi_t}:=\beta_t^{-1}\circ\Phi_t\in C^\infty(M,\Sigma M\otimes f_0^*TN)$ to be differentiable at $t=0$ in the following sense: the map $]-\varepsilon,\varepsilon[\lra C^\infty(M,\Sigma M\otimes f_0^*TN)$, $t\mapsto\wih{\Phi}_t$, has a derivative at $t=0$ which is at least continuous on $M$.
Here we consider the topology induced by the H$^{1,2}$-norm on $C^\infty(M,\Sigma M\otimes f_0^*TN)$.
In that case, we denote by $\frac{\partial\Phi}{\partial t}(0):=\frac{\partial\wih{\Phi}}{\partial t}(0)\in C^1(M,\Sigma M\otimes f_0^*TN)$.
Note that, if $\wih{\Dir}_t:=\beta_t^{-1}\circ\Dir_t\circ\beta_t:C^\infty(M,\Sigma M\otimes f_0^*TN)\lra C^\infty(M,\Sigma M\otimes f_0^*TN)$, then $\langle\wih{\Dir}_t\wih{\Phi}_t,\wih{\Phi}_t\rangle=\langle\Dir_t\Phi_t,\Phi_t\rangle$ since $\beta_t$ is unitary.\\

As a straightforward consequence of Proposition \ref{p:critpointL1}, we obtain:

\begin{corollary}\label{c:suffcondcritpointL1}
Let $f_0\in C^\infty(M,N)$ be a harmonic map and $\Phi_0\in\mathrm{ker}(\Dir^{f_0})$.
Assume the existence, for every smooth variation $(f_t)_{t\in]-\varepsilon,\varepsilon[}$ of $f_0$, of a variation $(\Phi_t)_{t\in]-\varepsilon,\varepsilon[}$ of $\Phi_0$ which is differentiable at $t=0$ and such that $\frac{d}{dt}\left(\Dir^{f_t}\Phi_t,\Phi_t\right)_{\mathrm{L}^2}{}_{|_{t=0}}=0$.
Then the pair $(f_0,\Phi_0)$ is an uncoupled Dirac-harmonic map.
\end{corollary}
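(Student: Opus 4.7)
The goal is to verify the two Euler--Lagrange equations characterising Dirac-harmonic maps: the first, $\Dir^{f_0}\Phi_0 = 0$, is part of the hypothesis, and since $f_0$ is harmonic, $\mathrm{tr}_g(\nabla df_0) = 0$, so the second equation $\mathrm{tr}_g(\nabla df_0) = \frac{V_{\Phi_0}}{2}$ reduces to the pointwise identity $V_{\Phi_0} \equiv 0$ on $M$. Once this is established, the pair $(f_0,\Phi_0)$ is Dirac-harmonic, and it is uncoupled by definition because $f_0$ is already harmonic.

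The plan is to feed the hypothesis into the pointwise formula \eqref{eq:dDirdt} and integrate over the closed manifold $M$. Because $\Phi_0 \in \ker\Dir^{f_0}$, the integral $\int_M \langle\Dir_0\Phi_0, \frac{\partial\Phi}{\partial t}(0)\rangle\,dv^M$ vanishes trivially, and $\int_M \langle\Dir_0\frac{\partial\Phi}{\partial t}(0), \Phi_0\rangle\,dv^M = \int_M \langle\frac{\partial\Phi}{\partial t}(0), \Dir_0\Phi_0\rangle\,dv^M = 0$ by $\mathrm{L}^2$-selfadjointness of the twisted Dirac operator on the closed manifold $M$. Combined with the hypothesis $\frac{d}{dt}\left(\Dir^{f_t}\Phi_t,\Phi_t\right)_{\mathrm{L}^2}{}_{|_{t=0}} = 0$, this collapses \eqref{eq:dDirdt} (integrated) to $\int_M h(V_{\Phi_0}, \frac{\partial f}{\partial t}(0))\,dv^M = 0$.

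Finally, I would argue that $\frac{\partial f}{\partial t}(0)$ can be prescribed to be any smooth section $Y\in\Gamma(f_0^*TN)$: the variation $f_t(x) := \exp^N_{f_0(x)}(tY(x))$ is smooth with initial velocity $Y$, and the hypothesis produces for it a compatible spinor variation $(\Phi_t)$ so that the previous step applies. Thus $\int_M h(V_{\Phi_0}, Y)\,dv^M = 0$ for every $Y\in\Gamma(f_0^*TN)$, and non-degeneracy of the pullback metric $h$ forces $V_{\Phi_0} \equiv 0$. I do not expect a substantive obstacle: the derivation is essentially bookkeeping once \eqref{eq:dDirdt} is in hand. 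The only subtle point worth checking is that the two spinor-variation terms each vanish \emph{independently} (not merely as a sum or as a real part), which is what allows the complex $\mathrm{L}^2$-hypothesis to be used directly without any reality adjustment.
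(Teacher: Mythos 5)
Your proof is correct and follows essentially the same route as the paper's one-line argument: integrate \eqref{eq:dDirdt}, use $\Dir_0\Phi_0=0$ and self-adjointness of $\Dir_0$ to kill the spinor-variation terms, and conclude $\int_M h(V_{\Phi_0},Y)\,dv^M=0$ for arbitrary $Y=\frac{\partial f}{\partial t}(0)$, forcing $V_{\Phi_0}\equiv 0$. Your expansion of the terse original — noting that each spinor-variation term vanishes on its own and that the exponential map realizes any section of $f_0^*TN$ as an initial velocity — is a faithful unpacking, not a different method.
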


\begin{proof}
Only $V_{\Phi_0}=0$ has to be proved, which follows from integrating (\ref{eq:dDirdt}) and using the self-adjointness of $\Dir^{f_0}$.
\end{proof}

\section{The graded case}\label{sec.grad}

In this section we consider the situation where the bundle $\Sigma M$ 
admits an orthogonal and parallel $\mathbb{Z}_2$-grading 
$G\in\End(\Sigma M)$, $G^2=\id$, anti-commuting 
with the Clifford-multiplication by tangent vectors.
Such a grading naturally exists
in even dimensions and in dimension $m\equiv1\;(8)$ and induces a grading of $\Sigma M\otimes f_0^*TN$.
 
In even dimension $m$, it is given by the Clifford action of the 
so-called complex volume form of $TM$, namely
$G(\varphi)=i^{\frac{m}{2}} e_1\cdot \ldots \cdot e_m\cdot \varphi$ for a positively 
oriented orthonormal local frame. One easily checks that the definition
of $G$ does not depend on the choice of the local frame, and thus $G$ is globally defined.
The spinor bundle then decomposes into two 
complex subbundles $\Sigma_+M$ and $\Sigma_-M$ associated 
to the $+1$ and $-1$-eigenvalue of $G$ respectively.
As $G$ is Hermitian and parallel, the decomposition  $\Sigma M= \Sigma_+M\oplus \Sigma_-M$
is orthogonal in the complex sense and parallel.

In dimension $m\equiv 1\;(8)$, the grading $G$ is 
provided by a real structure 
on the complex spinor representation, see \eg \cite[Sec. 1.7]{friedrich:00}. 
This map is complex anti-linear, involutive and anticommutes with the
Clifford action by tangent vectors. It is still self-adjoint in the real sense, \ie
$\Re e(\<G(\varphi),\psi\>)=\Re e(\<\varphi,G(\psi)\>)$ for all spinors $\varphi,\psi$. 
Thus the real structure induces a real-orthogonal and parallel decomposition 
$\Si M=\Sigma_+M \oplus \Sigma_-M$ into \emph{real} subbundles associated to the 
eigenvalues $\pm 1$. We also have $\Si M=\Si_+M\otimes_\mR\mC$.\\

Since $f_0^*TN$ is considered as a real vector bundle, 
we obtain a $\mathbb{Z}_2$-grading $G\otimes \id$
on the tensor product $\Si M\otimes f_0^*TN$ anticommuting with the
Clifford multiplication by tangent vectors.
In particular $G\otimes \id$ anti-commutes with $\Dir^{f_0}$ which hence splits into $\Dir_+^{f_0}+\Dir_-^{f_0}$, where
\[\Dir_\pm^{f_0}:C^\infty(M,\Si_\pm M\otimes f_0^*TN)\lra C^\infty(M,\Si_\mp M\otimes f_0^*TN).\]
{}From the or\-tho\-go\-na\-li\-ty of the splitting, we see that 
$\Re e(\langle\Dir^{f_0}\Phi_+,\Phi_+\rangle)=\Re e(\langle\Dir^{f_0}\Phi_-,\Phi_-\rangle)=0$ for all $\Phi_\pm\in C^\infty(M,\Si_\pm M\otimes f_0^*TN)$.
On the other hand, $(\Dir^{f_0}\Phi_\pm,\Phi_\pm)_{\mathrm{L}^2}$ is real as 
$\Dir^{f_0}$ is self-adjoint. 
Thus $(\Dir^{f_0}\Phi_+,\Phi_+)_{\mathrm{L}^2}=(\Dir^{f_0}\Phi_-,\Phi_-)_{\mathrm{L}^2}=0$.
Therefore, Corollary \ref{c:suffcondcritpointL1} implies:

\begin{corollary}\label{c:existDiracharm} 
Assume $m$ is even or $m\equiv 1\;(8)$. Let  $f_0\in C^\infty(M^m,N)$
be a harmonic map. Split
$\Phi_0\in\mathrm{ker}(\Dir^{f_0})$ into $\Phi_0=\Phi_0^++\Phi_0^-$. 
Then the pairs $(f_0,\Phi_0^+)$ and $(f_0,\Phi_0^-)$ are Dirac-harmonic.
\end{corollary}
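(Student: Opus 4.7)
The plan is to apply Corollary~\ref{c:suffcondcritpointL1} separately to each of $(f_0,\Phi_0^+)$ and $(f_0,\Phi_0^-)$. First I would verify that $\Phi_0^+$ and $\Phi_0^-$ individually lie in $\ker(\Dir^{f_0})$: since $\Dir^{f_0}$ anticommutes with $G\otimes\id$, the image $\Dir^{f_0}\Phi_0^+$ lies in $C^\infty(M,\Si_\mp M\otimes f_0^*TN)$ while $\Dir^{f_0}\Phi_0^-$ lies in $C^\infty(M,\Si_\pm M\otimes f_0^*TN)$. These two subbundles are in (real) direct sum, so the hypothesis $\Dir^{f_0}\Phi_0=0$ forces both summands to vanish.

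Next, for any smooth variation $(f_t)_{t\in]-\varepsilon,\varepsilon[}$ of $f_0$, I would take as candidate variation of $\Phi_0^\pm$ the one produced by the parallel transport $\beta_t$ introduced before Corollary~\ref{c:suffcondcritpointL1}, namely $\Phi_t:=\beta_t(\Phi_0^\pm)$. Then $\widehat{\Phi}_t=\Phi_0^\pm$ is constant in $t$, which trivially yields the required differentiability at $t=0$, with vanishing derivative. Because the grading $G$ -- or, in the case $m\equiv 1\;(8)$, the underlying real structure which defines $G$ -- is parallel with respect to the spin connection, and the pull-back connection on $f^*TN$ comes from the Levi-Civita connection on $N$, the induced connection on $\Si M\otimes f^*TN$ commutes with $G\otimes\id$. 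Consequently, parallel transport along the curves $s\mapsto(s,x)$ preserves the grading, so $\Phi_t\in C^\infty(M,\Si_\pm M\otimes f_t^*TN)$ for every $t$.

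With this choice in hand, I would then observe that $(\Dir^{f_t}\Phi_t,\Phi_t)_{\mathrm{L}^2}$ vanishes identically in $t$. Indeed $\Dir^{f_t}\Phi_t$ lies in the complementary graded subbundle, so the pointwise computation already carried out just before the statement of the corollary gives $\Re e\langle\Dir^{f_t}\Phi_t,\Phi_t\rangle=0$ at every point of $M$. Integrating, the real part of the $\mathrm{L}^2$-pairing is zero, while self-adjointness of $\Dir^{f_t}$ forces this pairing to be real; hence it is identically zero, and in particular has vanishing derivative at $t=0$. Corollary~\ref{c:suffcondcritpointL1} then yields that $(f_0,\Phi_0^\pm)$ is an uncoupled Dirac-harmonic map.

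The step that requires the most care is the parallel-transport invariance of the grading in the case $m\equiv 1\;(8)$, where $G$ is complex-antilinear. There one has to invoke that the real structure on the spinor representation is intertwined with the Clifford action of the Levi-Civita connection $1$-form -- a standard consequence of the classification of real spinor representations -- which guarantees that $G$ is covariantly constant and that its real eigenbundles are preserved by parallel transport in $\Si M\otimes f^*TN$ over $]-\varepsilon,\varepsilon[\times M$. Once this is in place, the argument runs uniformly in both parities.
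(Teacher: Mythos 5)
Your proof is correct and follows the paper's approach: the paper likewise observes that $\Re e\langle\Dir^{f_0}\Phi_\pm,\Phi_\pm\rangle=0$ pointwise for any $\Phi_\pm$ in a graded subbundle, combines this with self-adjointness to get that the $L^2$-pairing vanishes, and invokes Corollary~\ref{c:suffcondcritpointL1}. You spell out more explicitly the points the paper leaves implicit (that $\Phi_0^\pm\in\ker\Dir^{f_0}$, and that the parallel-transport variation $\widehat\Phi_t\equiv\Phi_0^\pm$ makes the pairing vanish for all $t$ so its derivative is zero); note also that your worry about $\beta_t$ preserving the grading is in fact automatic, since $\beta_t$ acts as the identity on the $\Sigma M$ factor and only transports the $f^*TN$ factor, so it commutes with $G\otimes\id$ regardless of the parallelism of $G$.
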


\begin{remark}
An alternative proof of Corollary \ref{c:existDiracharm} is obtained by showing 
$V_{G\otimes\id(\Phi_0)}=-V_{\Phi_0}$ for all spinors $\varphi_0$. 
If $\Phi_0=\pm G\otimes\id(\Phi_0)$, then using $V_{-\Phi_0}=V_{\Phi_0}$, we obtain $V_{\Phi_0}=0$ and hence $(f_0,\Phi_0)$ is Dirac-harmonic.
\end{remark}


\section{The case $m\in 4\mathbb{N}$}\label{sec.m4k}

As noticed above, if we assume $m$ to be even, the Dirac operator $D^f$ is odd with respect
to the above grading.
Restriction to sections of $\Si_+ M$ thus yields
an operator
  $$\Dir^f_+:C^\infty(M,\Si_+M)\to C^\infty(M,\Si_-M).$$ 
It is a Fredholm operator and  the 
Atiyah-Singer index theorem yields its (complex) index:
   $$\ind(\Dir^f_+)= \{\mathrm{ch}(f^*TN)\cdot\wih{A}(TM)\}[M].$$
This implies 
\[\ind(\Dir^f_+)=\dim_\mC\ker(\Dir^f_+)- \dim_\mC\ker (\Dir^f_-) 
  =\left\{\begin{array}{lll} 
\al(M,f)&\textrm{ if }m\equiv 0&(8)\\ 
0 \hfill&\textrm{ if }m\equiv 2,6&(8)\\ 
2 \,\al(M,f)&\textrm{ if }m\equiv 4&(8)\\ 
\end{array}\right.\]
This follows from the definition of $\al$ if $m$ is a 
multiple of~$4$ and from the fact that the real (case $m\equiv 6\;(8)$) or quaternionic (case $m\equiv 2\;(8)$) structure on $\Si M$ exchanges $\ker \Dir^f_+$ and $\ker \Dir^f_-$ if $m\equiv 2,6\;(8)$. Alternatively, the statement in the case $m\equiv 6\;(8)$ can be deduced from 
the index theorem.

We now restrict to the case that $m$ is a 
multiple of~$4$.
Corollary~\ref{c:existDiracharm} yields the following 

\begin{corollary}\label{c:existDiracharm4k} 
Let $m\in 4\mathbb{N}$.
Assume $f_0\in C^\infty(M^m,N)$ to be a har\-mo\-nic map with $\ind(\Dir^{f_0}_+)\neq0$.
Set $\epsilon:=\mathrm{sign}(\ind(\Dir^{f_0}_+))\in \{+,-\}$.
Then $$\{(f_0,\Phi_0^\epsilon)\,|\,\Phi_0^\epsilon\in \ker(\Dir^{f_0}_\epsilon)\}$$
is a linear space of Dirac-harmonic maps of complex dimension at least $|\ind(\Dir^{f_0}_+)|$.
This complex dimension is even for $m\equiv 4\;(8)$.
\end{corollary}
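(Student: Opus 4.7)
The plan is to apply Corollary~\ref{c:existDiracharm} to spinors lying purely in one half of the grading and then to read off the required dimension from the index identity just displayed. Since $m\in 4\mathbb{N}$ is in particular even, the $\mathbb{Z}_2$-grading of Section~\ref{sec.grad} is available, and the off-diagonal form of $\Dir^{f_0}$ with respect to it forces every $\Phi^\epsilon\in\ker(\Dir^{f_0}_\epsilon)\subset C^\infty(M,\Si_\epsilon M\otimes f_0^*TN)$ to lie in $\ker(\Dir^{f_0})$. Writing the trivial splitting $\Phi^\epsilon=\Phi^\epsilon+0$ and feeding it into Corollary~\ref{c:existDiracharm} yields at once that $(f_0,\Phi^\epsilon)$ is Dirac-harmonic for every such $\Phi^\epsilon$. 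Since $\ker(\Dir^{f_0}_\epsilon)$ is a $\mC$-vector subspace of $C^\infty(M,\Si_\epsilon M\otimes f_0^*TN)$, the asserted family of Dirac-harmonic pairs inherits a $\mC$-linear structure.

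The dimension bound then follows immediately from the identity
\[\ind(\Dir^{f_0}_+)=\dim_\mC\ker(\Dir^{f_0}_+)-\dim_\mC\ker(\Dir^{f_0}_-)\]
established earlier in this section: choosing $\epsilon$ to be the sign of $\ind(\Dir^{f_0}_+)$ guarantees $\dim_\mC\ker(\Dir^{f_0}_\epsilon)\geq|\ind(\Dir^{f_0}_+)|$.

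The one step that I expect to require real care is the evenness claim for $m\equiv 4\;(8)$. In this dimension the half-spinor representations of $\Spin(m)$ are quaternionic, so each half-spinor bundle $\Si_\pm M$ carries a parallel, complex-antilinear involution $J$ with $J^2=-\id$ which commutes with Clifford multiplication by tangent vectors; this is precisely the representation-theoretic feature distinguishing $m\equiv 4\;(8)$ from the $m\equiv 2,6\;(8)$ cases mentioned just above the statement, where the analogous structure interchanges $\Si_+ M$ with $\Si_- M$ and thereby kills the index. Tensoring $J$ with the identity on the real bundle $f_0^*TN$ produces a quaternionic structure $\widetilde J$ on $\Si_\epsilon M\otimes f_0^*TN$ which (anti)commutes with $\Dir^{f_0}_\epsilon$ and therefore preserves $\ker(\Dir^{f_0}_\epsilon)$, turning the kernel into an $\mH$-module whose complex dimension is consequently even. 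This is also reassuringly consistent with the formula $\ind(\Dir^{f_0}_+)=2\al(M,f_0)$ displayed above.
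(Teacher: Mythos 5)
Your proof is correct and matches the paper's intended argument exactly: Corollary~\ref{c:existDiracharm} applied to pure-$\epsilon$ kernel elements (each of which is its own $\epsilon$-component and has zero $(-\epsilon)$-component, hence lies in $\ker(\Dir^{f_0})$), the index identity displayed just before the corollary for the dimension bound, and the quaternionic structure on $\Si_\pm M$ in dimension $m\equiv 4\;(8)$ --- the same structure the paper invokes explicitly in the proof of Theorem~\ref{t:f0notpertmin} --- for the evenness. A minor terminological slip only: a complex-antilinear $J$ with $J^2=-\id$ is a quaternionic structure rather than an ``involution'', but the argument is unaffected.
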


\section{Minimality}\label{sec.min}  

The present and the following sections provide results for any dimension~$m$, 
but are mainly interesting 
if $m\equiv 1,2\;(8)$.

\begin{definition}
A smooth map $f_0:M\lra N$ is called \emph{perturbation-minimal} if and only if $\dim(\ker(\Dir^{f^*TN}))\geq \dim(\ker(\Dir^{f_0^*TN}))$ for all $f$ in a $C^\infty$-neighbourhood of $f_0$.
\end{definition}


Obviously, any homotopy class of maps from $M$ to $N$ contains per\-tur\-ba\-tion-mi\-ni\-mal maps.

\begin{proposition}\label{p:perturbmin}
Let $f_0:M\to N$ be perturbation-minimal and harmonic. 
Then for any $\Phi_0\in \ker(\Dir^{f_0})$, the pair $(f_0,\Phi_0)$ is 
an uncoupled Dirac-har\-mo\-nic map.
\end{proposition}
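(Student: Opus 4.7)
The plan is to verify the hypotheses of Corollary \ref{c:suffcondcritpointL1}. Since $f_0$ is assumed harmonic, what remains is: for every smooth variation $(f_t)_{t\in]-\varepsilon,\varepsilon[}$ of $f_0$, exhibit a variation $(\Phi_t)$ of $\Phi_0$ differentiable at $t=0$ with $\frac{d}{dt}(\Dir^{f_t}\Phi_t,\Phi_t)_{\mathrm{L}^2}\big|_{t=0}=0$. The natural strategy is to let $\Phi_t$ itself lie in $\ker(\Dir^{f_t})$, in which case the inner product vanishes identically, not just to first order.

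First I would use the parallel-transport trivialisation $\beta_t$ introduced before Corollary \ref{c:suffcondcritpointL1} to transport everything to the fixed bundle $\Sigma M\otimes f_0^*TN$, setting $\widehat{\Dir}_t:=\beta_t^{-1}\circ\Dir^{f_t}\circ\beta_t$. Since $(f_t)$ is smooth in $t$, the coefficients of the differential operator $\widehat{\Dir}_t$ depend smoothly on $t$, and the operators are elliptic and self-adjoint on the fixed Hilbert space $\mathrm{L}^2(M,\Sigma M\otimes f_0^*TN)$.

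Next I would invoke the standard upper semi-continuity of $t\mapsto\dim\ker(\widehat{\Dir}_t)$ for a continuous family of self-adjoint Fredholm operators. Combined with the perturbation-minimality hypothesis, which forces the opposite inequality for $f_t$ close to $f_0$, the dimension is locally constant: $\dim\ker(\widehat{\Dir}_t)=\dim\ker(\Dir^{f_0})$ for all sufficiently small $t$. Then Kato's perturbation theory (equivalently, the Riesz projector $P_t=\frac{1}{2\pi i}\oint_\gamma(\lambda-\widehat{\Dir}_t)^{-1}\,d\lambda$ along a small circle $\gamma$ around $0$ avoiding the rest of the spectrum) yields a smooth family of orthogonal projectors $P_t$ onto $\ker(\widehat{\Dir}_t)$ with $P_0=$ projection onto $\ker(\Dir^{f_0})$, and hence $P_0\Phi_0=\Phi_0$.

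Define $\widehat{\Phi}_t:=P_t\Phi_0$ and $\Phi_t:=\beta_t\widehat{\Phi}_t$. Then $\Phi_0=\beta_0 P_0\Phi_0=\Phi_0$, the map $t\mapsto\widehat{\Phi}_t$ is smooth (hence in particular differentiable at $t=0$ in the $\mathrm{H}^{1,2}$-sense required by the paper), and $\Dir^{f_t}\Phi_t=\beta_t\widehat{\Dir}_t\widehat{\Phi}_t=0$ for all small $t$. Consequently $(\Dir^{f_t}\Phi_t,\Phi_t)_{\mathrm{L}^2}\equiv 0$, so its derivative at $t=0$ vanishes. Corollary \ref{c:suffcondcritpointL1} then yields $V_{\Phi_0}=0$ and therefore $(f_0,\Phi_0)$ is an uncoupled Dirac-harmonic map.

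The only non-routine ingredient is the smooth dependence of the kernel projection $P_t$ on the parameter, for which the constancy of $\dim\ker(\widehat{\Dir}_t)$ supplied by perturbation-minimality is exactly what is needed; without this hypothesis the projection would only be upper semi-continuous and one could not construct $\Phi_t$ in the kernel for nearby $t$.
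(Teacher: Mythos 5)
Your proposal is correct and shares the paper's core strategy: use perturbation-minimality to force local constancy of $\dim\ker(\widehat{\Dir}_t)$, project $\Phi_0$ onto $\ker(\widehat{\Dir}_t)$, and feed the resulting curve into Corollary~\ref{c:suffcondcritpointL1}. Indeed your Riesz projector $P_t$ agrees with the paper's $L^2$-orthogonal projection $\pi_t$ since $\widehat{\Dir}_t$ is self-adjoint. Where you part ways is in establishing differentiability at $t=0$. The paper carries this out by hand in two explicit claims: Claim A gives $\|\pi_t(\Psi_0)-\Psi_0\|_{L^2}=\mathrm{O}(t)$ via the min-max inequality $\|\widehat{\Dir}_t\Psi\|_{L^2}\geq\lambda_0\|\Psi\|_{L^2}$ on $\ker(\widehat{\Dir}_t)^\perp$, and Claim B identifies the derivative $\Theta_0$ as the solution of $\Dir_0\Theta_0=-\tfrac{d}{dt}\widehat{\Dir}_t|_{t=0}\Psi_0$ in $\ker(\Dir_0)^\perp$ and proves $H^{1,2}$-convergence of the difference quotient. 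You instead invoke the contour-integral formula $P_t=\frac{1}{2\pi i}\oint_\gamma(\lambda-\widehat{\Dir}_t)^{-1}\,d\lambda$ together with smooth dependence of the resolvent on $t$, which is standard once one has the uniform spectral gap (and that gap is exactly what the paper deduces from the locally constant kernel dimension plus eigenvalue continuity). Your route is shorter and yields full smoothness of $t\mapsto P_t$ in a neighbourhood of $0$, whereas the paper only proves (and only needs) differentiability at $t=0$, but does so with elementary estimates that are self-contained and make the relevant quantitative bounds explicit. Either version of the technical step is sound; if you wanted to make your appeal to Kato airtight you would want to spell out why a single contour $\gamma$ works uniformly for all small $t$ (the spectral gap argument) and why $(\lambda-\widehat{\Dir}_t)^{-1}$ is a differentiable family of bounded operators $L^2\to H^{1,2}$ (using that $\widehat{\Dir}_t-\widehat{\Dir}_0$ is a zero-order operator whose coefficients depend smoothly on $t$), but these are precisely the facts the paper also uses, just packaged differently.
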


\begin{proof}
Since $f_0$ is harmonic and $\Dir^{f_0}\Phi_0=0$, it suffices by Corollary \ref{c:suffcondcritpointL1} to show that, for every smooth variation $(f_t)_{t\in]-\varepsilon,\varepsilon[}$ of $f_0$, there exists a variation $(\Phi_t)_t$ of $\Phi_0$, differentiable at $t=0$, such that $\Dir^{f_t}\Phi_t=0$ for all $t$ close enough to~$0$.
To that extent we fix such a variation $(f_t)_{t\in]-\varepsilon,\varepsilon[}$ of $f_0$ and consider, for each $t\in\,]-\varepsilon,\varepsilon[$, the L$^2$-orthogonal projection $\pi_t:\mathrm{L}^2(M,\Sigma M\otimes f_0^*(TN))\lra\ker(\wih{\Dir}_t)\subset C^\infty(M,\Sigma M\otimes f_0^*TN)$, where $\wih{\Dir}_t:=\beta_t^{-1}\circ\Dir_t\circ\beta_t$ and $\Dir_t:=\Dir^{f_t}$, see Proposition \ref{p:critpointL1}.\\
\noindent{\bf Claim A}: {\sl For every $\Psi_0\in\ker(\Dir_0)$ one has $\|\pi_t(\Psi_0)-\Psi_0\|_{\mathrm{L}^2}=\mathrm{O}(t)$ as $t\to 0$.}\\
{\it Proof of {\rm Claim A}}.
By assumption, one has $\dim(\ker(\wih{\Dir}_t))\geq \dim(\ker(\wih{\Dir}_0))=\dim(\ker(\Dir_0))$ for all $t$ small enough.
Since the dimension of the kernel is always upper semi-continuous in the parameter, this already implies the equality $\dim(\ker(\wih{\Dir}_t))=\dim(\ker(\Dir_0))=:k$ for all $t$ in a sufficiently small neighbourhood of~$0$.
Let now $\{\lambda_j(\wih{\Dir}_t)\}_{j=1}^\infty$ denote the spectrum of $\wih{\Dir}_t$, where $0\leq|\lambda_1|\leq\ldots\leq|\lambda_j|\leq|\lambda_{j+1}|\leq\ldots$.
Since the map $t\mapsto f_t$ is smooth, the spectrum of $\wih{\Dir}_t$ depends continuously on $t$ (each eigenvalue is continuous in $t$) \cite[Sec. 9.3]{Weidmann}, in particular $t\mapsto \lambda_{k+1}(\wih{\Dir}_t)$ is continuous.
But the condition $\dim(\ker(\wih{\Dir}_t))=k$ forces $\lambda_{k+1}(\wih{\Dir}_t)$ to be positive for all small enough $t$, therefore there exists $\lambda_0>0$ such that $|\lambda_{k+1}(\wih{\Dir}_t)|\geq\lambda_0$ for all small $t$.
The min-max principle then implies that, for every $\Psi\in\ker(\wih{\Dir}_t)^\perp\cap\mathrm{H}^{1,2}(M)$, 
\[\|\wih{\Dir}_t\Psi\|_{\mathrm{L}^2}\geq|\lambda_{k+1}(\wih{\Dir}_t)|\cdot\|\Psi\|_{\mathrm{L}^2}\geq\lambda_0\cdot\|\Psi\|_{\mathrm{L}^2}.\]
Putting $\Psi=\pi_t(\Psi_0)-\Psi_0\in\ker(\pi_t)\cap\mathrm{H}^{1,2}(M)=\ker(\wih{\Dir}_t)^\perp\cap\mathrm{H}^{1,2}(M)$, we obtain
\be 
\lambda_0\cdot\|\pi_t(\Psi_0)-\Psi_0\|_{\mathrm{L}^2} &\leq&\|\wih{\Dir}_t(\pi_t(\Psi_0)-\Psi_0)\|_{\mathrm{L}^2}\\
&=&\|(\wih{\Dir}_0-\wih{\Dir}_t)(\Psi_0)\|_{\mathrm{L}^2}\\
&\leq&\|\wih{\Dir}_0-\wih{\Dir}_t\|_{\mathrm{op}}\cdot\|\Psi_0\|_{\mathrm{H}^{1,2}},
\ee
where $\|\cdot\|_{\mathrm{op}}$ denotes the operator norm for bounded linear operators from $\mathrm{H}^{1,2}(M)$ into $\mathrm{L}^2(M)$.
Since by construction all operators $\wih{\Dir}_t$ have the same principal symbol and depend smoothly on the parameter $t$, it is easy to see that $\|\wih{\Dir}_0-\wih{\Dir}_t\|_{\mathrm{op}}=\mathrm{O}(t)$ and hence $\|\pi_t(\Psi_0)-\Psi_0\|_{\mathrm{L}^2}=\mathrm{O}(t)$ as $t$ goes to~$0$, which was to be shown.
\hfill$\checkmark$ 

$ $\\
\noindent{\bf Claim B}: {\sl For every $\Psi_0\in\ker(\Dir_0)$, the limit $\buil{\mathrm{lim}}{t\to 0}\frac{\pi_t(\Psi_0)-\Psi_0}{t}$ exists in $\mathrm{H}^{1,2}$ and is actually smooth on $M$.}\\
{\it Proof of {\rm Claim B}}.
For obvious reasons (see (\ref{eq:dDirdt}) above), we use the short notation $\frac{d}{dt}\wih{\Dir}_t{}_{|_{t=0}}:=\sum_{j=1}^m e_j\cdot\otimes R_{\frac{\partial f}{\partial t}(0),df_0(e_j)}^N\in C^\infty(M,\mathrm{End}(\Sigma M\otimes f_0^*TN))$.
We first show that $\frac{d}{dt}\wih{\Dir}_t{}_{|_{t=0}}\Psi_0\in\ker(\Dir_0)^\perp\cap\mathrm{H}^{1,2}(M)$ (as in Claim A, the orthogonal complement is considered in L$^2$).
Let $X_0\in\ker(\Dir_0)$, then for every small $t\neq 0$ one has $(\frac{d}{dt}\wih{\Dir}_t{}_{|_{t=0}}\Psi_0,X_0)_{\mathrm{L}^2}=\buil{\mathrm{lim}}{t\to 0}(\frac{1}{t}\wih{\Dir}_t\Psi_0,X_0)_{\mathrm{L}^2}$, where the convergence of the zero-order-operator $\frac{\Dir_t-\Dir_0}{t}$ to $\frac{d}{dt}\wih{\Dir}_t{}_{|_{t=0}}$ is to be understood in the C$^0$-sense (and not in the operator norm $\|\cdot\|_{\mathrm{op}}$ above).
Now
\be 
(\frac{1}{t}\wih{\Dir}_t\Psi_0,X_0)_{\mathrm{L}^2}&=&(\frac{1}{t}\cdot\Psi_0,\wih{\Dir}_tX_0)_{\mathrm{L}^2}\\
&=&(\Psi_0-\pi_t(\Psi_0),\frac{1}{t}\wih{\Dir}_tX_0)_{\mathrm{L}^2}
\ee
because of $\wih{\Dir}_t(\pi_t(\Psi_0))=0$.
But $\frac{1}{t}\wih{\Dir}_tX_0\buil{\lra}{t\to 0}\frac{d}{dt}\wih{\Dir}_t{}_{|_{t=0}}X_0$ (which is well-defined and lies in $C^\infty(M,\Sigma M\otimes f_0^*TN)$) and $\Psi_0-\pi_t(\Psi_0)\buil{\lra}{t\to 0}0$ in L$^2$ by Claim A, therefore $(\frac{1}{t}\wih{\Dir}_t\Psi_0,X_0)_{\mathrm{L}^2}\buil{\lra}{t\to 0}0$, and this implies   $$(\frac{d}{dt}\wih{\Dir}_t{}_{|_{t=0}}\Psi_0,X_0)_{\mathrm{L}^2}=0.$$
Since this holds for every $X_0\in\ker(\Dir_0)$, we obtain $\frac{d}{dt}\wih{\Dir}_t{}_{|_{t=0}}\Psi_0\in\ker(\Dir_0)^\perp$.
Elliptic regularity yields the smoothness of $\Psi_0$ and thus  $\frac{d}{dt}\wih{\Dir}_t{}_{|_{t=0}}\Psi_0$ is smooth as well.

Hence there exists a unique $\Theta_0\in\ker(\Dir_0)^\perp$ such that $\Dir_0\Theta_0=-\frac{d}{dt}\wih{\Dir}_t{}_{|_{t=0}}\Psi_0$.
Note that, because of the ellipticity of $\Dir_0$, the section $\Theta_0$ has to be smooth.
We show that $\buil{\mathrm{lim}}{t\to 0}\frac{\pi_t(\Psi_0)-\Psi_0}{t}=\Theta_0$ in H$^{1,2}$.
By Claim A, we have, for every $t\neq 0$,
\be
\Dir_0(\frac{\pi_t(\Psi_0)-\Psi_0}{t}-\Theta_0)&=&\frac{(\Dir_0-\Dir_t)}{t}(\pi_t(\Psi_0))-\Dir_0\Theta_0\\
&\bui{\buil{\lra}{t\to 0}}{\mathrm{L}^2}&-\frac{d}{dt}\wih{\Dir}_t{}_{|_{t=0}}\Psi_0+\frac{d}{dt}\wih{\Dir}_t{}_{|_{t=0}}\Psi_0,
\ee
that is, $\|\Dir_0(\frac{\pi_t(\Psi_0)-\Psi_0}{t}-\Theta_0)\|_{\mathrm{L}^2}\buil{\lra}{t\to 0}0$.
Since $\Dir_0$ is elliptic, it remains to show that $\|\frac{\pi_t(\Psi_0)-\Psi_0}{t}-\Theta_0\|_{\mathrm{L}^2}\buil{\lra}{t\to 0}0$.
Pick any $X_0\in\ker(\Dir_0)$, then for any $t\neq 0$,
\be 
\big(\frac{\pi_t(\Psi_0)-\Psi_0}{t}-\Theta_0,X_0\big)_{\mathrm{L}^2}&=&\big(\frac{\pi_t(\Psi_0)-\Psi_0}{t},X_0\big)_{\mathrm{L}^2}\\
&=&\big(\frac{\pi_t(\Psi_0)-\Psi_0}{t},X_0-\pi_t(X_0)\big)_{\mathrm{L}^2}.
\ee
Since by Claim A both $\frac{\pi_t(\Psi_0)-\Psi_0}{t}$ remains L$^2$-bounded near $t=0$ and $X_0-\pi_t(X_0)\buil{\lra}{t\to0}0$ in L$^2$, we deduce that $\big(\frac{\pi_t(\Psi_0)-\Psi_0}{t}-\Theta_0,X_0\big)_{\mathrm{L}^2}\buil{\lra}{t\to0}0$.
This holds for any $X_0$ in the finite-dimensional space $\ker(\Dir_0)$, therefore $\|\frac{\pi_t(\Psi_0)-\Psi_0}{t}-\Theta_0\|_{\mathrm{L}^2}\buil{\lra}{t\to0}0$, which yields the result.
\hfill$\checkmark$

It follows from Claim B that, setting $\wih{\Phi}_t:=\pi_t(\Phi_0)$, then $\wih{\Phi}_t\in C^\infty(M,\Sigma M\otimes f_0^*TN)$ is a solution of $\wih{\Dir}_t\wih{\Phi}_t=0$ for every $t$ with $\wih{\Phi}_t{}_{|_{t=0}}=\Phi_0$ and the map $]-\varepsilon,\varepsilon[\lra\mathrm{H}^{1,2}(M)$,  $t\mapsto\wih{\Phi}_t$ is differentiable at $t=0$ with $\frac{\partial\wih{\Phi}}{\partial t}(0)\in C^\infty(M,\Sigma M\otimes f_0^*TN)$.
Therefore, $\Phi_t:=\beta_t\circ\wih{\Phi}_t$ fulfills the conditions required above and the proposition is proved.
\end{proof}

%

\section{Non-minimality}\label{sec.nonmin}

Assume that a harmonic map $f_0:M\to N$ is given with
$\al(M,f_0)\neq 0$, thus in particular $\ker(\Dir^{f_0})\neq 0$.
In the previous section we have seen that, if $f_0$ is perturbation-minimal,
then for any $\Phi \in\ker(\Dir^{f_0})$ the pair $(f_0,\Phi)$ is an uncoupled Dirac-harmonic map. 
Hence we have obtained a linear space of uncoupled
Dirac-harmonic maps of real dimension $2 \dim_\mC \ker(\Dir^{f_0})$.
Although we have no proof for the moment, it seems that the perturbation-minimal case is 
the generic one.

In this section we study the case where $f_0$ is not 
perturbation-minimal.
Obviously, the proof of the perturbation-minimal setting cannot be easily adapted since the dimension of $\mathrm{ker}(\Dir_t)$ does no longer remain constant for small $t$. However, using the grading arguments 
explained in Section~\ref{sec.grad}, we will 
obtain a space of uncoupled, nontrivial 
Dirac-harmonic maps of even larger dimension. Most of the spaces --- but not all --- are actually complex vector spaces. 
Nevertheless, for homogeneity of presentation we only use real dimensions in the theorem below.

To that extent we define, for any $m\equiv 0,1,2,4\;(8)$, the integers $b_m$ and $d_m$ by the following formulas in which
the minimum runs over all compact Riemannian spin manifolds $M$ of dimension~$m$, all spin structures on $M$, 
all compact Riemannian manifolds $N$ and all
maps $f:M\to N$ with non-trivial $\al$-index.
\[ 
b_m:=\min\Big\{\dim_\R(\mathrm{ker}(\Dir^{f_0}))\,|\,f_0\in C^\infty(M^m,N),\,\al(M,f_0)\neq0\Big\}
\]
and
\be 
d_m&:=&\min\Big\{\max_{\ep\in\{\pm\}}\big\{\dim_\R(\mathrm{ker}(\Dir^{f_0}_\ep))\big\}\,|\,f_0\in C^\infty(M^m,N),\,\al(M,f_0)\neq0\\
& &\phantom{\min\Big\{}\textrm{ and }f_0\textrm{ non-perturbation-minimal}\Big\}.
\ee
Both $b_m$ and $d_m$ are positive integers because of the assumption $\al(M,f_0)\neq 0$ for all admissible $M$ and $f_0$ (and such exist in each dimension under consideration).
By Proposition~\ref{p:perturbmin}, the number $b_m$ is the minimal {\it real} dimension of the space of $\Phi_0$'s such that $(f_0,\Phi_0)$ is a Dirac-harmonic map for a given perturbation-minimal and harmonic map $f_0\in C^\infty(M^m,N)$.
In case the given map $f_0\in C^\infty(M^m,N)$ is harmonic but no more perturbation-minimal, the number $d_m$ is the minimal \emph{real} dimension of the space of $\Phi_0$'s such that $(f_0,\Phi_0)$ is Dirac-harmonic (Corollary \ref{c:existDiracharm}).
The main result of this section provides a lower bound for $d_m$.
For the sake of completeness, we include the corresponding lower bound for $b_m$.

\begin{theorem}\label{t:f0notpertmin}
With the above notations, $b_m$ and $d_m$ are bounded below by the following integers:
\begin{center}{\rm\begin{tabular}{|c||c|c|c|c|}
\hline m  mod 8& 0& 1& 2& 4\\\hline\hline 
$b_m\geq$ & 2& 2& 4& 4\\\hline 
$d_m\geq$ & 4& 3& 6& 8\\\hline 
\end{tabular}}\end{center}
In particular, $b_m\geq a_m$ and $d_m\geq a_m$ where again $a_m=2$ for $m\equiv 0,1\;(8)$ and $a_m=4$ for $m\equiv 2,4\;(8)$.
\end{theorem}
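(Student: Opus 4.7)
I will verify the two rows of the table by case analysis on $m\bmod 8\in\{0,1,2,4\}$, combining three ingredients in each case: the explicit formula for $\al(M,f_0)$ in terms of $\dim_\mC\ker(\Dir^{f_0})$ recalled in Section~\ref{sec.index}; the real or quaternionic symmetry $J$ on $\Si M$ (or on each $\Si_\pm M$) that commutes or anticommutes with Clifford multiplication and therefore acts on every kernel $\ker(\Dir^{f_t})$; and the homotopy invariance of $\al$ from Corollary~\ref{c:alphahomotinv}, applied along a smooth family $(f_t)$ witnessing non-perturbation-minimality of $f_0$.

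The bound on $b_m$ is almost immediate from Section~\ref{sec.index}. In $m\equiv 0,1\;(8)$ the formula for $\al$ forces $\dim_\mC\ker(\Dir^{f_0})\geq 1$, so $\dim_\mR\geq 2$. In $m\equiv 2\;(8)$ the quaternionic structure on $\ker(\Dir^{f_0})$ together with $\al\neq 0$ forces $\dim_\mC\geq 2$, i.e.\ $\dim_\mR\geq 4$. In $m\equiv 4\;(8)$ the index identity $\dim_\mC\ker(\Dir^{f_0}_+)-\dim_\mC\ker(\Dir^{f_0}_-)=2\al$ combined with the quaternionic structure on each half-kernel gives $\dim_\mC\ker(\Dir^{f_0}_+)\geq 2|\al|\geq 2$, hence $\dim_\mR\geq 4$.

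For $d_m$, I fix a smooth family $(f_t)$ with $\dim_\mC\ker(\Dir^{f_t})<\dim_\mC\ker(\Dir^{f_0})$ for all small $t\neq 0$. Since $\al(M,f_t)$ is constant along the family and $J$ acts on every $\ker(\Dir^{f_t})$, the dimensions $\dim_\mC\ker(\Dir^{f_t})$ (or $\dim_\mC\ker(\Dir^{f_t}_\pm)$) lie in a fixed residue class dictated by~$J$. Non-perturbation-minimality then forces at least one full $J$-``block'' of extra kernel at $f_0$ compared to the generic nearby value. A short chirality bookkeeping in each case ---namely the decomposition $\Si M\cong\Si_+ M\otimes_\mR\mC$ for $m\equiv 1\;(8)$; the $J$-antilinear identification $\ker(\Dir^{f_0}_+)\cong\ker(\Dir^{f_0}_-)$ for $m\equiv 2\;(8)$; and the separate quaternionic structures on the two half-kernels together with $\dim_\mH\ker(\Dir^{f_0}_+)-\dim_\mH\ker(\Dir^{f_0}_-)=\al$ for $m\equiv 4\;(8)$--- then yields exactly the bounds $d_m\geq 4,\,3,\,6,\,8$.

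The main obstacle will be rigorously justifying the ``full-block jump'' used for $d_m$. The honest statement is that, along the family, eigenvalues of $\Dir^{f_t}$ can only cross zero in $J$-compatible groups; this is plausible because $J$ preserves the spectral decomposition of each $\Dir^{f_t}$, but a careful limit argument (based on the upper semi-continuity of $\dim\ker$ already used in the proof of Proposition~\ref{p:perturbmin}) is needed to conclude that $\dim_\mC\ker(\Dir^{f_0})$ must exceed the nearby generic value by at least one full block. The case $m\equiv 4\;(8)$ is the most delicate, since one has to control simultaneously the quaternionic structure on each half-kernel and the integer-valued index $2\al$.
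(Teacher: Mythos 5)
Your computation of $b_m$ in each residue class matches the paper's argument, and your list of ingredients (the $\al$-formula, the real/quaternionic symmetry, homotopy invariance of $\al$) is exactly what the paper uses. The issue is with your plan for $d_m$.

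You identify as the ``main obstacle'' the need for a careful limit argument tracking how eigenvalues of $\Dir^{f_t}$ cross zero in $J$-compatible groups. The paper needs no such argument, and in fact no analysis of a family at all. Non-perturbation-minimality gives one map $f$ in a small $C^\infty$-neighbourhood of $f_0$ with $\dim\ker\Dir^{f}<\dim\ker\Dir^{f_0}$ (not a whole family $(f_t)$ with this strict inequality for all small $t\ne 0$, which your setup assumes without justification). Such an $f$ is homotopic to $f_0$, so $\al(M,f)=\al(M,f_0)$ by Corollary~\ref{c:alphahomotinv}. The real or quaternionic structure on $\Si M\otimes f^*TN$ is present for \emph{every} map, so \emph{both} $\dim_\mC\ker\Dir^{f_0}$ and $\dim_\mC\ker\Dir^{f}$ are constrained to lie in the same residue class (odd for $m\equiv 1$, congruent to $2$ mod $4$ for $m\equiv 2$) and both are bounded below by the $\al$-invariant. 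A strict inequality between two integers in the same residue class, the smaller of which already attains the $\al$-lower-bound, immediately forces $\dim_\mC\ker\Dir^{f_0}$ to be at least the next admissible value ($3$ for $m\equiv 1$, $6$ for $m\equiv 2$). Similarly, for $m\equiv 0,4\;(8)$ one observes that if $\min_\ep\dim\ker\Dir^{f_0}_\ep=0$, then $\dim\ker\Dir^{f_0}=|\ind(\Dir_+^{f_0})|$ is the smallest possible value compatible with the index, so $f_0$ would be perturbation-minimal; hence $\min_\ep\ge 1$ (in $\mC$ or $\mH$ dimensions respectively), and the index relation $\max=\min+|\ind|$ then gives $\max\ge 2$. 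This is purely an arithmetic comparison at $f_0$ and at the single nearby $f$; the spectral-flow picture you sketch is true but unnecessary, and pursuing it rigorously would be more work than the algebra it replaces. So the one step you flag as unfinished is precisely the one that requires no analytic input.
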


Roughly speaking, the theorem says that in the non-perturbation-minimal case the space of Dirac-harmonic maps we obtain is
larger than in the perturbation-minimal one.
Note that Theorem \ref{t:f0notpertmin} implies Theorem \ref{t:mainresult}.

\begin{proof}
We handle the four cases separately.\\
{\bf$\bullet$ Case $m\equiv 0\;(8)$}: If $\al(M,f_0)\neq0$, then $|\al(M,f_0)|\geq 1$, thus 
  $$|\dim_\mC \ker \Dir^{f_0}_+ + \dim_\mC \ker \Dir^{f_0}_-|\geq |\ind(\Dir^{f_0}_+)|\geq 1,$$ 
which implies $b_m\geq 2$.
On the other hand we have
\be 
\max_{\ep\in\{\pm\}}(\dim_\C\mathrm{ker}(\Dir^{f_0}_\ep))&=&\min_{\ep\in\{\pm\}}(\dim_\C\mathrm{ker}(\Dir^{f_0}_\ep))+|\ind(\Dir^{f_0}_+)| 
\ee
Thus, if $f_0$ is not perturbation-minimal, i.\thinspace e. if $\mathrm{ker}(\Dir^{f_0}_\ep))\neq \{0\}$, then 
  $$d_m= 2 \min \Bigl\{\buil{\max}{\ep\in\{\pm\}}(\dim_\C\mathrm{ker}(\Dir^{f_0}_\ep))\Bigr\}\geq 4.$$
%
{\bf$\bullet$ Case $m\equiv 4\;(8)$}: If $\al(M,f_0)\neq0$, then $|\al(M,f_0)|\geq 1$ as well.
Recall that, in these dimensions there exists a quaternionic structure on the (twisted) spinor bundle commuting with both the $\mathbb{Z}_2$-grading and the Dirac operator $\Dir^{f_0}$, see \eg \cite[Sec. 1.7]{friedrich:00}. 
This quaternionic structure turns the vector spaces $\Si_\pm M\otimes f_0^*TN$ into quaternionic spaces. 
Thus the discussion is a\-na\-lo\-gous to above, but all real dimensions
are divisible by $4$ instead of $2$. We obtain $b_m\geq 4$ and $d_m\geq 8$.\\
%
{\bf$\bullet$ Case $m\equiv 1\;(8)$}: If $\al(M,f_0)\neq0$, then $\al(M,f_0)=1\in\mathbb{Z}_2$, so that $\dim_\C\mathrm{ker}(\Dir^{f_0})\geq1$ in case $f_0$ is perturbation-minimal. 
Hence $b_m\geq 2$.
If $f_0$ is not perturbation-minimal, then for any $\ep\in \{+,-\}$ we have $\dim_\R\mathrm{ker}(\Dir^{f_0}_\ep)=\dim_\C\mathrm{ker}(\Dir^{f_0})\geq3$.
This shows $d_m\geq 3$.\\
{\bf$\bullet$ Case $m\equiv 2\;(8)$}: If $\al(M,f_0)\neq0$, then $\al(M,f_0)=1\in\mathbb{Z}_2$.
Recall that, in these dimensions there exists a quaternionic structure on the (twisted) spinor bundle anti-commuting with the $\mathbb{Z}_2$-grading $G\otimes\id$ and commuting with the Dirac operator $\Dir^{f_0}$, see again \eg \cite[Sec. 1.7]{friedrich:00}. Thus $\mathrm{ker}(\Dir^{f_0})$ is a quaternionic vector spaces and thus $\dim_\C\mathrm{ker}(\Dir^{f_0})\geq 2$ in the perturbation-minimal case, implying $b_m\geq 4$.
As the quaternionic structure anti-commutes with $G\otimes\id$, we also have 
$$\dim_\R\mathrm{ker}(\Dir^{f_0}_\ep)= \dim_\C\mathrm{ker}(\Dir^{f_0})$$
for $\ep=+$ and $\ep=-$.
If $f_0$ is not perturbation-minimal, then $\dim_\C\mathrm{ker}(\Dir^{f_0})\geq 6$. It follows $d_m\geq 6$.
\end{proof}

\begin{remark}\label{r:fnotpertminmequiv1mod8}
For a given Dirac-harmonic map $(f_0,\Phi_0)$ with harmonic map\-ping-component $f_0$, the pair $(f_0,\lambda\Phi_0)$ obviously remains Dirac-harmonic for every $\lambda\in\C$.
In particular the space of Dirac-harmonic maps $(f_0,\Phi_0)$ with fixed harmonic map\-ping-component $f_0$ is a complex cone.
Therefore, in dimension $m\equiv 1\;(8)$ (which is the only one where the space of $\Phi_0$'s ma\-king $(f_0,\Phi_0)$ Dirac-harmonic is \emph{a priori} only real) and for non-perturbation-minimal harmonic maps $f_0$, we actually obtain a complex cone of real dimension at least $4$ of $\Phi_0$'s such that $(f_0,\Phi_0)$ is Dirac-harmonic.
\end{remark}

\section{Examples of maps with non-trivial index}\label{sec.examples}  

In this section, we still assume that $M$ is a compact spin manifold but do not fix the spin structure since \eg in 
Corollary~\ref{c:al0nonzero} we want to choose it such that
the index does not vanish. As a consequence the spin structure $\chi$ will no longer be suppressed in $\al(M,\chi,E)$.

\subsection{The case $m=2$}\label{subsec.m=2}
In this subsection we assume $M$ to be a closed orientable connected surface. 
Such a surface always carries a spin structure~$\chi$. In general this spin structure is not unique; the space of all spin structures 
(up to isomorphism) is an affine $\mZ_2$-space, modeled on the $\mZ_2$-vector space $H^1(M,\mathbb{Z}_2)$. Thus $\chi+\al$ is a spin structure on $M$ for every $\al\in H^1(M,\mathbb{Z}_2)$.

We first compute the $\al$-genus in terms of simpler 
bordism invariants of surfaces which we use to produce new examples of Dirac-harmonic maps.

\begin{proposition}\label{p:indexDME2dim}
Let $E\lra M$ be a real vector bundle of rank $k$ over a closed connected oriented surface $M$ with spin structure $\chi$.
Let $w_1(E)\in H^1(M,\mathbb{Z}_2)$ and $w_2(E)\in H^2(M,\mathbb{Z}_2)$ be the first and second Stiefel-Whitney classes of $E\lra M$ respectively.
Then
\begin{equation}\label{eq:indthmsurf}
\al(M,\chi,E)=(k+1)\cdot\alpha(M,\chi)+\al(M,\chi+w_1(E))+w_2(E)[M].
\end{equation}
\end{proposition}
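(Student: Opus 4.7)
My plan is to combine additivity of both sides of (\ref{eq:indthmsurf}) in $E$ with spin-bordism invariance, so as to reduce the identity to a few explicit base cases. First I would verify additivity under $E=E_1\oplus E_2$: the left-hand side is additive since $\Dir^{E_1\oplus E_2}=\Dir^{E_1}\oplus\Dir^{E_2}$, while for the right-hand side, applying the Whitney sum formulas $w_1(E_1\oplus E_2)=w_1(E_1)+w_1(E_2)$ and $w_2(E_1\oplus E_2)=w_2(E_1)+w_2(E_2)+w_1(E_1)\cup w_1(E_2)$ reduces additivity to the identity
\[
\alpha(M,\chi+\beta_1+\beta_2)+\alpha(M,\chi+\beta_1)+\alpha(M,\chi+\beta_2)+\alpha(M,\chi)=(\beta_1\cup\beta_2)[M]
\]
in $\mathbb{Z}_2$, for all $\beta_1,\beta_2\in H^1(M,\mathbb{Z}_2)$. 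This is exactly Atiyah's realization of spin structures on a Riemann surface as quadratic refinements of the $\mathbb{Z}_2$-intersection form, and I would invoke this classical identity directly.

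Granted additivity and bordism invariance, verification of (\ref{eq:indthmsurf}) reduces to three types of bundles whose iterated direct sums (modulo trivial summands) generate $\Omega_2^{\mathrm{Spin}}(BO)$: (i)~the trivial line bundle over any $(M,\chi)$, (ii)~an arbitrary nontrivial real line bundle $L$ over $(M,\chi)$, and (iii)~an orientable rank-$2$ bundle $E_0$ with $w_2(E_0)[M]=1$ over an orientable surface. Case~(i) is immediate, since $2\alpha(M,\chi)+\alpha(M,\chi)+0\equiv\alpha(M,\chi)$ mod~$2$. For case~(ii), I would use the fact that spin structures on $M$ form a torsor under $H^1(M,\mathbb{Z}_2)$ realized by twisting with real line bundles: this yields a canonical, Dirac-compatible isomorphism $\Sigma^{\chi}M\otimes_{\mathbb{R}}L\cong\Sigma^{\chi+w_1(L)}M$, hence $\alpha(M,\chi,L)=\alpha(M,\chi+w_1(L))$, which matches the right-hand side since $w_2(L)=0$ and $(1+1)\alpha(M,\chi)=0$.

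The hard part will be case~(iii). Here the right-hand side simplifies to $3\alpha(M,\chi)+\alpha(M,\chi)+1=1$ in $\mathbb{Z}_2$ independently of $(M,\chi)$, so by spin-bordism invariance and additivity it suffices to exhibit a single pair $(M,\chi,E_0)$ of this type for which the left-hand side equals~$1$. I would take $M=\mathbb{S}^2=\mathbb{CP}^1$ with its unique spin structure (for which $\alpha(\mathbb{S}^2)=0$) and let $E_0$ be the real rank-$2$ bundle underlying the complex line bundle $\mathcal{O}(1)$, so that $w_2(E_0)[\mathbb{S}^2]=1$. Complexifying gives $E_0\otimes_{\mathbb{R}}\mathbb{C}\cong\mathcal{O}(1)\oplus\mathcal{O}(-1)$, whence $\Dir^{E_0}\cong\Dir^{\mathcal{O}(1)}\oplus\Dir^{\mathcal{O}(-1)}$. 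Using the standard K\"ahler identification of $\Dir$ with $\sqrt{2}(\bar\partial+\bar\partial^*)$ acting on $\sqrt{K}$-twisted Dolbeault forms (with $\sqrt{K}=\mathcal{O}(-1)$ on $\mathbb{CP}^1$), together with Riemann-Roch and Serre duality, I obtain $\dim_{\mathbb{C}}\ker\Dir^{\mathcal{O}(1)}=\dim_{\mathbb{C}}\ker\Dir^{\mathcal{O}(-1)}=1$, hence $\dim_{\mathbb{C}}\ker\Dir^{E_0}=2$ and $\alpha(\mathbb{S}^2,\chi,E_0)=[2/2]_{\mathbb{Z}_2}=1$, completing the argument.
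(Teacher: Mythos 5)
Your proof proposal is correct in outline and follows a genuinely different organizing principle from the paper's: you replace the paper's ``$E\oplus\Lambda^kE$'' device by additivity of both sides of~\eqref{eq:indthmsurf} under Whitney sum. That is a clean idea. The additivity of the left side is immediate, and your reduction of the additivity of the right side to the mod-$2$ identity
\[
\alpha(M,\chi+\beta_1+\beta_2)+\alpha(M,\chi+\beta_1)+\alpha(M,\chi+\beta_2)+\alpha(M,\chi)=(\beta_1\cup\beta_2)[M]
\]
is correct. Do be aware, though, that this is a \emph{consequence} of Atiyah's theorem rather than the theorem itself: one needs that $\alpha(M,\chi)$ is the Arf invariant of the quadratic refinement $q_\chi$, that $q_{\chi+\gamma}=q_\chi+(\,\cdot\cup\gamma)[M]$, and the change-of-Arf formula $\alpha(M,\chi+\gamma)=\alpha(M,\chi)+q_\chi(\gamma)$, after which the displayed identity follows. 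Since Lemma~\ref{l:betacupbeta} gives the paper an elementary substitute, you are buying a slicker proof at the price of invoking a deeper input. Your verifications of cases~(i) and~(ii) are correct, and your Riemann--Roch computation of $\dim_{\mathbb C}\ker\Dir^{E_0}=2$ over $\C\mathrm{P}^1$ agrees with the value the paper obtains by citing Ginoux--Habib.

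There is, however, a genuine gap in case~(iii). You assert that ``by spin-bordism invariance and additivity it suffices to exhibit a single pair $(M,\chi,E_0)$'' with nonzero left side, but this does not follow from what you have written. Spin-bordism invariance alone is insufficient: if $(M,\chi)$ represents the nontrivial class in $\Omega_2^{\mathrm{Spin}}(\mathrm{pt})$, then $(M,\chi,E_0^M)$ is \emph{not} spin-bordant to $(\mathbb{S}^2,E_0)$; it differs by the class of $(M,\chi,\underline{\mathbb R}^2)$. What actually closes this loop is the connected-sum bordism $(M,\chi,E_0^M)\sim(M,\chi,\underline{\mathbb R}^2)\amalg(\mathbb{S}^2,E_0)$, which exists because any orientable bundle over $M$ minus a disc is trivial, and which yields $\alpha(M,\chi,E_0^M)=2\alpha(M,\chi)+\alpha(\mathbb{S}^2,E_0)=1$. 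This is precisely the key geometric step of the paper's Claim~1, so your argument does not avoid it --- it tacitly relies on it. You also never spell out why direct sums of (i), (ii), (iii) exhaust all bundles: one must pass from $E$ to the orientable bundle $E\oplus L$ with $w_1(L)=w_1(E)$ (using $w_1(E)\cup w_1(E)=0$ to control $w_2$), and then invoke the stable classification of orientable bundles over a surface by $w_2$. With the connected-sum bordism and this reduction made explicit, your proof is complete.
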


\begin{proof}
The proof consists of two steps.\\
{\bf Claim 1}: {\sl The statement holds in case that $E\lra M$ is orientable}.\\
{\it Proof of {\rm Claim 1}}.
Since any orientable real line bundle is trivial, we may assume $k\geq 2$.
First, we show that it suffices to handle the case where $M=\C\mathrm{P}^1$.
Let indeed $D^2\subset M$ be any embedded closed $2$-disc in $M$.
Then the loop $\partial D^2$ lies in the commutator subgroup $[\pi_1(M\setminus \!\bui{D}{\circ}{\!}^2),\pi_1(M\setminus \!\bui{D}{\circ}{\!}^2)]$ of $\pi_1(M\setminus \!\bui{D}{\circ}{\!}^2)$.
More precisely, $M\setminus\!\bui{D}{\circ}{\!}^2$ deformation retracts onto the wedge sum of $2g$ circles, where $g$ is the genus of $M$.
Since any orientable real vector bundle over the circle is trivial, it is also trivial on the wedge sum of circles.
This in turn implies that $E_{|_{M\setminus\!\bui{D}{\circ}{\!}^2}}\lra M\setminus\!\bui{D}{\circ}{\!}^2$ is trivial.
It follows that $M$ together with $E\lra M$ can be seen as the connected sum $M\sharp\,\C\mathrm{P}^1$, where the first factor $M$ carries the \emph{trivial} vector bundle $\underline{\R^k}:=\R^k\times M\lra M$ and $\C\mathrm{P}^1$ carries some vector bundle $F\lra\C\mathrm{P}^1$
such that $E$ is obtained by gluing $\underline{\R^k}$ together with $F$.
Here one should pay attention to the fact that, when performing the connecting sum, the bundles on both factors have to be trivialized so that their trivializations coincide on $\partial D^2$.
Now as in classical surgery theory (without bundles), any surgery between manifolds with bundles provides a spin-bordism for vector bundles.
The invariance of $\al$ under spin-bordism then gives $\al(M,\chi,E)=\al(M,\chi,\underline{\R^k})+\al(\C\mathrm{P}^1,F)$ (there is only one spin structure on $\C\mathrm{P}^1$, so we omit the notation $\chi$ in that case).
Obviously one has $\al(M,\chi,\underline{\R^k})=k\al(M,\chi,\underline{\R})=k\alpha(M,\chi)$ - in particular (\ref{eq:indthmsurf}) holds for trivial vector bundles (of any rank), since $w_2(\underline{\R^k})=0$.
Moreover, the Stiefel-Whitney number $w_2(E)[M]$ is also a spin-bordism invariant, a fact which is analogous to Pontrjagin's theorem \cite[Thm. 4.9 p.52]{MilnorStasheff} and which can be elementarily proved in just the same way.
Therefore, $w_2(E)[M]=w_2(\underline{\R^k})[M]+w_2(F)[\C\mathrm{P}^1]=w_2(F)[\C\mathrm{P}^1]$.
Hence we are reduced to showing $\al(\C\mathrm{P}^1,F)=w_2(F)[\C\mathrm{P}^1]$, which is exactly (\ref{eq:indthmsurf}) for $M=\C\mathrm{P}^1$ since~$\alpha(\C\mathrm{P}^1)=0$ (the existence of a metric with positive scalar curvature implying the vanishing of the kernel of the untwisted Dirac operator).\\
Since $\C\mathrm{P}^1$ is simply-connected, each vector bundle over $\C\mathrm{P}^1$ is orientable.
If $k\geq 3$, then there are only two isomorphism-classes of $k$-ranked real vector bundles over $\C\mathrm{P}^1$, whereas those isomorphism classes stand in one-to-one correspondence with the integral powers of the tautological complex line bundle $\gamma_1\lra\C\mathrm{P}^1$ if $k=2$, see \eg \cite[Thm. 18.5]{Steenrod51}.
Actually, it suffices to show (\ref{eq:indthmsurf}) for the tautological complex line bundle $E=\gamma_1$.
Consider indeed $l\cdot\gamma_1:=\bigoplus_{j=1}^l\gamma_1\lra\C\mathrm{P}^1$ for any $l\in\mathbb{N}$, $l\geq 1$.
Then $l\cdot\gamma_1$ is a $2l$-ranked real vector bundle over $\C\mathrm{P}^1$ and is non-trivial (its total Chern-class is $1+l\cdot a\neq 1$, where $a\in H^2(\C\mathrm{P}^1,\mathbb{Z})\cong\mathbb{Z}$ is the generator given by the tautological bundle).
Moreover, $l\cdot\gamma_1$ represents up to isomorphism the \emph{only} non-trivial $2l$-ranked real vector bundle over $\C\mathrm{P}^1$ if $l\geq 2$.
Since trivially $\al(\C\mathrm{P}^1,l\cdot\gamma_1)=l\cdot\al(\C\mathrm{P}^1,\gamma_1)$ and $w_2(l\cdot\gamma_1)=l\cdot w_2(\gamma_1)$, we are reduced to the case where $l=1$.
In case $k=2l+1$ is odd, the bundle $l\cdot\gamma_1\oplus \underline{\R}\lra\C\mathrm{P}^1$ is again $k$-ranked and non-trivial, so it is up to isomorphism the only non-trivial $k$-ranked real vector bundle over $\C\mathrm{P}^1$.
As noticed above, the spin Dirac operator on $\C\mathrm{P}^1$ has trivial kernel, so that  $\al(\C\mathrm{P}^1,l\cdot\gamma_1\oplus\underline{\R})=\al(\C\mathrm{P}^1,l\cdot\gamma_1)=l\cdot\al(\C\mathrm{P}^1,\gamma_1)$ and, as is well-known, $w_2(l\cdot\gamma_1\oplus\underline{\R})=w_2(l\cdot\gamma_1)=l\cdot w_2(\gamma_1)$, so that again we are reduced to the case where $E=\gamma_1$.
Note that the case $k=2$ can be deduced from the case $k\geq3$ by adding a trivial real line bundle: as we have seen above, $\al(M,\chi,E\oplus\underline{\R})=\al(M,\chi,E)+\al(M,\chi)$ and $w_2(E\oplus\underline{\R})=w_2(E)$.\\
It remains to show $\al(\C\mathrm{P}^1,\gamma_1)=1=w_2(\gamma_1)[\C\mathrm{P}^1]$.
On the one hand, since~$\gamma_1$ is a complex line bundle, $w_2(\gamma_1)=[c_1(\gamma_1)]_{\mathbb{Z}_2}$.
{}From $c_1(\gamma_1)[\C\mathrm{P}^1]=-1$, we obtain $w_2(\gamma_1)[\C\mathrm{P}^1]=[c_1(\gamma_1)[\C\mathrm{P}^1]]_{\mathbb{Z}_2}=1$.
On the other hand, it follows from \cite[Thm. 4.5]{GinHabibspecCPdtordu} that $\mathrm{dim}_\C(\mathrm{ker}(\Dir^{\gamma_1})=2$ (beware that we tensorize over~$\R$, so that $\Sigma T\C\mathrm{P}^1\otimes_\R\gamma_1=\{\Sigma T\C\mathrm{P}^1\otimes_\C\gamma_1\}\oplus\{\Sigma T\C\mathrm{P}^1\otimes_\C\gamma_1^{-1}\}$).
Therefore $\al(\C\mathrm{P}^1,\gamma_1)=1$, which concludes the proof of Claim 1.\hfill$\checkmark$ 

\noindent{\bf Claim 2}: {\sl The statement holds in general}.\\
{\it Proof of {\rm Claim 2}}.
By definition of the determinant bundle $\Lambda^kE\to M$, the bundle $E\oplus \Lambda^kE\to M$ is orientable.
Alternatively, this follows from $w_1(E)=w_1(\La^kE)$.
Moreover, the spinor bundle associated to the spin structure $\chi+w_1(E)$ on $M$ is $\Sigma M\otimes_\R \Lambda^kE$, in particular $\al(M,\chi,E\oplus \Lambda^kE)=\al(M,\chi,E)+\al(M,\chi,\Lambda^kE)=\al(M,\chi,E)+\al(M,\chi+w_1(E))$.
We apply the proposition in the orientable case for $E\oplus \Lambda^kE$ instead of $E$ and obtain
   $$\al(M,\chi,E)+\al(M,\chi+w_1(E))= (k+1)\al(M,\chi) + w_2(E\oplus \Lambda^kE).$$
Now we calculate $w_2(E\oplus \Lambda^kE)=w_2(E)+ w_1(E)\cup w_1(\La^k E) + w_2(\La^k E)$. As $\Lambda^kE$ is of real rank $1$ we have $w_2(\La^k E)=0$.
According to Lemma \ref{l:betacupbeta} below $w_1(E)\cup w_1(E)=0$. Thus $w_2(E\oplus \Lambda^kE)=w_2(E)$ which shows Claim 2 and concludes the proof of Proposition \ref{p:indexDME2dim}.
\end{proof}

\begin{lemma}\label{l:betacupbeta}
Let $M$ be an orientable surface and $\beta\in H^1(M,\mZ_2)$. Then $\beta\cup \beta=0$.
\end{lemma}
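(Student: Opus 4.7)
The plan is to combine the squaring property of Steenrod operations with Wu's formula. First, for any space $X$ and any class $\beta\in H^1(X,\mZ_2)$, the defining property $\mathrm{Sq}^{\deg\beta}(\beta)=\beta\cup\beta$ of Steenrod squares yields $\beta\cup\beta=\mathrm{Sq}^1(\beta)$. So the lemma reduces to showing that $\mathrm{Sq}^1$ acts trivially on $H^1(M,\mZ_2)$ when $M$ is a closed orientable surface.

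To establish this I would invoke Wu's formula: on a closed $n$-manifold $M$, one has $\mathrm{Sq}^i(\alpha)=v_i\cup\alpha$ for every $\alpha\in H^{n-i}(M,\mZ_2)$, where $v_i\in H^i(M,\mZ_2)$ denotes the $i$-th Wu class of $M$. Applied with $n=2$ and $i=1$ this gives $\mathrm{Sq}^1(\beta)=v_1\cup\beta$. The total Wu class and the total Stiefel--Whitney class of $TM$ are linked by $w(TM)=\mathrm{Sq}(v)$, from which one immediately reads off $v_1=w_1(TM)$. Since $M$ is orientable, $w_1(TM)=0$, hence $v_1=0$ and therefore $\beta\cup\beta=0$.

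There is no real obstacle here: the argument is essentially a three-line reminder of classical material from algebraic topology. Should a more geometric argument be preferred, an alternative is to represent $\beta$ via Poincar\'e duality as the mod $2$ class of a smoothly embedded $1$-submanifold $\gamma_1\amalg\cdots\amalg\gamma_k\subset M$. Then the evaluation $\langle\beta\cup\beta,[M]\rangle$ equals $\sum_i\gamma_i\cdot\gamma_i$ modulo $2$; since $M$ is orientable, each $\gamma_i$ has an orientable (hence trivial) normal line bundle, so every self-intersection $\gamma_i\cdot\gamma_i$ vanishes.
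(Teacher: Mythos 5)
Your argument is correct but takes a genuinely different route. The paper proceeds by integral lifting: since $M$ is an orientable surface, $H_*(M,\mZ)$ is free, so the universal coefficient theorem makes reduction mod $2$ a ring isomorphism $H^*(M,\mZ)\otimes\mZ_2\to H^*(M,\mZ_2)$; one lifts $\beta$ to $\ol\beta\in H^1(M,\mZ)$, notes that $\ol\beta\cup\ol\beta=0$ by graded skew-symmetry of the integral cup product in odd degree, and reduces. Your proof instead identifies $\beta\cup\beta$ with $\mathrm{Sq}^1\beta$ and kills it via Wu's formula, since $v_1=w_1(TM)=0$. Both are valid; the paper's argument is more elementary in that it requires nothing beyond the universal coefficient theorem and the sign rule, and it makes explicit \emph{where} orientability enters (freeness of $H_1$), whereas your Wu-class argument uses heavier machinery but isolates the same topological input ($w_1=0$) and has the advantage of being stated entirely inside mod-$2$ cohomology without any lifting step. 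Your geometric alternative via self-intersection of an embedded $1$-manifold is also correct (the off-diagonal terms $\gamma_i\cdot\gamma_j$, $i\neq j$, cancel mod $2$, and each $\gamma_i$ has trivial normal bundle in an orientable surface) and is perhaps the closest in spirit to the way the lemma is used in the surrounding Proposition.
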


\begin{proof}
As $M$ is orientable, the homology group $H_k(M,\mZ)$ is a finitely generated free $\mZ$-module for $k=0,1,2$, and thus the universal coefficient
theorem implies that tensoring the coefficients with $\mZ_2$ yields isomorphisms $\mmmod 2: H^k(M,\mZ)\otimes \mZ_2\stackrel{\cong}{\lra} H^k(M,\mZ_2)$.
These isomorphisms are compatible with the $\cup$-products on $H^*(M,\mZ)$ and on $H^*(M,\mZ_2)$. Let $\overline\beta\in H^1(M,\mZ)$ such that
$\overline\beta\mmod 2=\beta$. As the cup-product on $H^*(M,\mZ)$ is skew-symmetric $\overline\beta \cup \overline\beta=0$. Thus 
  $$\beta\cup \beta = (\overline\beta\mmod 2)\cup (\overline\beta\mmod 2)= (\overline\beta \cup \overline\beta)\mmod 2 = 0.$$
\end{proof}

\textit{From now on and unless explicitly mentioned $N$ will be orientable.}

\begin{corollary}\label{c:al0nonzero}
Let $f:M\to N$ be a smooth map from a closed orientable connected surface $M$ of positive genus to an odd-dimensional o\-rien\-ta\-ble manifold $N$.
Then $M$ admits a spin structure $\chi$ such that the $\alpha$-genus $\al(M,\chi,f)\neq 0$.
In particular, each harmonic map in the homotopy class of $f$ provides a $4$-dimensional space of Dirac-harmonic maps.
\end{corollary}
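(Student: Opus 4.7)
The plan is to apply Proposition~\ref{p:indexDME2dim} directly to the bundle $E=f^*TN$ of rank $k=n=\dim N$. Since $N$ is orientable, $w_1(TN)=0$, hence $w_1(f^*TN)=f^*w_1(TN)=0$, so the term $\al(M,\chi+w_1(E))$ reduces to $\al(M,\chi)$. Since $n$ is odd, the coefficient $k+1=n+1$ is even, so $(k+1)\al(M,\chi)=0$ in $\mZ_2$. Formula~\eref{eq:indthmsurf} therefore collapses to
\[
\al(M,\chi,f) \;=\; \al(M,\chi) \;+\; w_2(f^*TN)[M] \quad\in\mZ_2.
\]

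The second step is to observe that, as $\chi$ ranges over the spin structures on $M$, the $\al$-genus $\al(M,\chi)\in\mZ_2$ attains both values $0$ and $1$. This is the classical identification of $\al(M,\chi)$ with the Arf invariant of the quadratic refinement of the intersection form on $H^1(M,\mZ_2)$ associated with $\chi$; for a closed orientable surface of genus $g\geq1$ this Arf invariant is known to take both values $0$ and $1$ (the numbers of even/odd spin structures being $2^{g-1}(2^g+1)$ and $2^{g-1}(2^g-1)$, both positive). Consequently, whatever the value of the fixed number $w_2(f^*TN)[M]\in\mZ_2$, we may choose a spin structure $\chi$ on $M$ with
\[
\al(M,\chi) \;=\; 1 + w_2(f^*TN)[M] \pmod 2,
\]
which makes $\al(M,\chi,f)=1\neq0$.

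For the final assertion, fix such a spin structure $\chi$ and let $f_0$ be a harmonic representative of $[f]$. By Corollary~\ref{c:alphahomotinv} we have $\al(M,\chi,f_0)=\al(M,\chi,f)\neq0$. Since $m=2\equiv2\;(8)$, Theorem~\ref{t:mainresult} applies with $a_m=a_2=4$ and produces a $4$-dimensional real linear space~$V$ of spinors $\Phi$ such that $(f_0,\Phi)$ is Dirac-harmonic.

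The only nontrivial point of the argument is the Arf-invariant realization fact that justifies choosing $\chi$; everything else is a direct substitution into Proposition~\ref{p:indexDME2dim} followed by an invocation of Theorem~\ref{t:mainresult}. The hypothesis of positive genus is essential precisely at this step, as the $2$-sphere carries a unique spin structure with vanishing $\al$-genus.
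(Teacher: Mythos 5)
Your argument is correct and follows the same route as the paper: specialize the index formula of Proposition~\ref{p:indexDME2dim} to $E=f^*TN$ (using $w_1=0$ from orientability of $N$ and $n+1$ even from $n$ odd) to reduce to $\al(M,\chi,f)=\al(M,\chi)+w_2(f^*TN)[M]$, then use that both values of $\al(M,\chi)\in\mZ_2$ are realized when $g\geq1$, and conclude via Corollary~\ref{c:alphahomotinv} and Theorem~\ref{t:mainresult}. The only difference is that you spell out the Arf-invariant count of even/odd spin structures, whereas the paper simply asserts that each value of $\al(M,\chi)$ is attained for $g\geq1$.
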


\begin{proof}
For any spin structure $\chi$ on $M$ the identity (\ref{eq:indthmsurf}) reads $\al(M,\chi,f)=\alpha(M,\chi)+w_2(f^*TN)[M]$ because of $N$ being odd-dimensional and orientable.
The term $w_2(f^*TN)[M]$ does not depend on the spin structure $\chi$ on~$M$.
Since the genus of $M$ is at least one, there exists for any $x\in\mathbb{Z}_2$ at least one spin structure with $\alpha$-genus $x$.
Therefore, whatever $w_2(f^*TN)[M]$ is, $\al(M,\chi,f)$ can be made equal to $1$ for at least one spin structure on $M$.
The homotopy invariance of the $\al$-genus (Corollary \ref{c:alphahomotinv}) concludes the proof.
\end{proof}

\begin{example}\label{ex:chidependsonf}
The choice of spin structure $\chi$ for Corollary \ref{c:al0nonzero} depends on $f$ through the value $w_2(f^*TN)[M]$.
Choose for instance $E$ to be an orientable but non-spin real vector bundle of rank $3$ over $M:=\mathbb{T}^2$, \eg let $E:=(\gamma_1\sharp\,\underline{\R^2})\oplus\underline{\R}\lra\C\mathrm{P}^1\sharp\,\mathbb{T}^2=\mathbb{T}^2$ where
$\ga_1$ is the tautological bundle over $\mC P^1$ which is glued together with the trivial line bundle to give a bundle over the connected sum 
$\C\mathrm{P}^1\sharp\,\mathbb{T}^2$. Let $N$ be the total space of $E$, take $f_1:M\to N$ to be the embedding by the zero-section and consider $f_2:M\to N$ defined by $f_2(z_1,z_2):=f_1(z_1^2,z_2)$ for all $(z_1,z_2)\in\mathbb{S}^1\times\mathbb{S}^1=\mathbb{T}^2$.
Then $w_2(f_1^*TN)[M]=1$ but $w_2(f_2^*TN)[M]=0$.
\end{example}

\noindent Another straightforward consequence of Proposition \ref{p:indexDME2dim} is the following

\begin{corollary}
Let $M$ be a compact orientable connected surface with 
a bounding spin structure $\chi$, and assume that $f:M\to N$ is given.
If $N$ is orientable and $\<w_2(TN),f_*[M]\>\neq 0$ (in particular $N$ is non-spin),
then $\al(M,\chi,f)\neq 0$.
\end{corollary}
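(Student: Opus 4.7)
The plan is to feed $E := f^*TN$ (of rank $n := \dim N$) into Proposition \ref{p:indexDME2dim} and then use successively (i) orientability of $N$, (ii) the bounding property of $\chi$, and (iii) naturality of Stiefel--Whitney classes to collapse the right-hand side to the single term $\langle w_2(TN), f_*[M]\rangle$.

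First, since $N$ is orientable we have $w_1(TN) = 0$, hence by naturality $w_1(f^*TN) = f^* w_1(TN) = 0$, so $\chi + w_1(f^*TN) = \chi$. Formula \eqref{eq:indthmsurf} then gives, in $\mathbb{Z}_2$,
$$\al(M,\chi,f) = (n+1)\,\al(M,\chi) + \al(M,\chi) + w_2(f^*TN)[M] = n\,\al(M,\chi) + w_2(f^*TN)[M].$$
Next, $\chi$ being bounding means by definition that $(M,\chi)$ is the spin boundary of some compact spin $3$-manifold; the spin-bordism invariance of the $\al$-genus (a special case of Proposition \ref{p:alphaspinbordisminv} with the trivial line bundle) then forces $\al(M,\chi) = 0$. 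Substituting back yields $\al(M,\chi,f) = w_2(f^*TN)[M]$.

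Finally, naturality of Stiefel--Whitney classes together with the definition of the push-forward of a fundamental class gives
$$w_2(f^*TN)[M] = \langle f^* w_2(TN),[M]\rangle = \langle w_2(TN), f_*[M]\rangle,$$
which is nonzero by hypothesis. Therefore $\al(M,\chi,f) \neq 0$. The parenthetical claim that $N$ is non-spin is automatic, since a nontrivial pairing $\langle w_2(TN), f_*[M]\rangle$ forces $w_2(TN) \neq 0$ in $H^2(N,\mathbb{Z}_2)$. There is no serious obstacle to this argument: once Proposition \ref{p:indexDME2dim} is in hand, the only check is the careful bookkeeping that each Stiefel--Whitney term in \eqref{eq:indthmsurf} either vanishes (by orientability) or reduces to the hypothesized pairing via naturality.
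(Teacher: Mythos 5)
Your proof is correct and follows exactly the route the paper intends (the paper leaves the verification implicit, calling the corollary a "straightforward consequence of Proposition \ref{p:indexDME2dim}"): specialize \eqref{eq:indthmsurf} to $E=f^*TN$, use orientability of $N$ to kill $w_1(f^*TN)$, use bordism-invariance to see that a bounding spin structure has $\al(M,\chi)=0$, and identify the remaining term $w_2(f^*TN)[M]$ with $\langle w_2(TN), f_*[M]\rangle$ by naturality and adjunction. Nothing is missing; the $\mathbb{Z}_2$ bookkeeping $(n+1)\al(M,\chi)+\al(M,\chi)=n\,\al(M,\chi)$ is also right.
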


Next we apply Proposition \ref{p:indexDME2dim} and Corollary \ref{c:al0nonzero} to produce new examples of Dirac-harmonic maps.
Those examples are constructed out of harmonic maps from surfaces with spin structures such that the corresponding genus $\al(M,\chi,f)$ does not vanish.\\

Recall a result from the celebrated Sacks-Uhlenbeck paper \cite{SacksUhlenbeck_AnnMath}, proven simultaneously by L. Lemaire \cite{Lemaire78}. Given a closed orientable surface $M$ and a closed $n$-dimensional Riemannian manifold $N$, there exists in each homotopy class of maps from $M$ to $N$ an energy-minimizing (in particular harmonic) map as soon as $\pi_2(N)=0$; if $\pi_2(N)\neq 0$ then there exists a system of generators for $\pi_2(N)$ as $\pi_1(N)$-module, each containing an energy-minimizing map from $\mathbb{S}^2=\C\mathrm{P}^1$ to $N$.\\

{\bf Case $\pi_2(N)=0$}: 
We obtain $\al(\mathbb{S}^2,f)=0$ for every map $f:\mathbb{S}^2\lra N$ since $f^*TN\lra\mathbb{S}^2$ is trivial, therefore we cannot apply our main result for $M=\mathbb{S}^2$. 

If the genus of $M$ is at least $1$ and $n$ is odd, then Corollary \ref{c:al0nonzero} already shows the existence of a spin structure $\chi$ on $M$ for which $\al(M,\chi,f)\neq 0$, whatever $f:M\lra N$ is.
Thus we obtain in that case the existence of a $4$-dimensional space of Dirac-harmonic maps of the form $(f_0,\Phi)$ where
$f_0$ is a harmonic map in the homotopy class of $f$.
This enhances the previous example \cite[Thm. 2]{JostMoZhu09} (based on twistor-spinors) for either genus at least $2$ or genus $1$ and bounding spin structures on $\mathbb{T}^2$, since in both situations no non-trivial twistor-spinor is available.
If $n$ is even, then $\al(M,\chi,f)=w_2(f^*TN)[M]$.
For $n=2$ and $N$ closed one has $w_2(f^*TN)[M]=0$ since $N$ is spin, hence our main result cannot be applied.
For $n\geq 4$ the number $\al(M,\chi,f)\neq 0$ as soon as $f^*TN\lra M$ is non-trivial, since a $k(\geq 3)$-ranked real vector bundle over a surface is trivial if and only if its first and second Stiefel-Whitney classes vanish, see \eg \cite[Sec. 12]{MilnorStasheff} (in particular $N$ must be non-spin).
Note that, if $n\geq 3$ and $N$ is non-spin, then there exists at least one $M$ and one map $f$ such that $f^*TN\lra M$ is non-trivial \cite[Prop. II.1.12]{lawson.michelsohn:89}, thus providing a non-trivial Dirac-harmonic map.\\
{\bf Case $\pi_2(N)\neq 0$}:
We obtain $\al(\mathbb{S}^2,f)=w_2(f^*TN)[\mathbb{S}^2]$ for any map $f:\mathbb{S}^2\lra N$.
For $n=2$ and $N$ non-orientable (otherwise $N$ is spin) the degree of any map $f:\mathbb{S}^2\lra N$ has to be even since $\mathbb{S}^2$ is simply-connected, therefore $w_2(f^*TN)[\mathbb{S}^2]=\mathrm{deg}(f)\cdot w_2(TN)[N]=0$, so that nothing can be said. 
For $n\geq 3$ the $\al$-genus vanishes if and only if $f^*TN\lra\mathbb{S}^2$ is trivial.
As an example for non-vanishing, consider the standard embedding $\C\mathrm{P}^1\bui{\lra}{f}\C\mathrm{P}^2$, $[z_1\!:\!z_2]\lmt[z_1\!:\!z_2\!:\!0]$.
Choosing the Fubini-Study metrics of holomorphic sectional curvature $4$ on both $\C\mathrm{P}^1$ and $\C\mathrm{P}^2$, the map $f$ becomes isometric and totally geodesic, in particular harmonic.
Moreover, the pull-back bundle $f^*T\C\mathrm{P}^2$ can be identified with $\gamma_1^{-2}\oplus\gamma_1^{-1}$, in particular $w_2(f^*T\C\mathrm{P}^2)=w_2(\gamma_1)\neq 0$, so that $\al(\mathbb{S}^2,f)=1$.
Therefore there exists a non-zero $\Phi\in C^\infty(\mathbb{S}^2,\Sigma\mathbb{S}^2\otimes f^*T\C\mathrm{P}^2)$ such that $(f,\Phi)$ is Dirac-harmonic.
Actually the space of such $\Phi$'s is at least (real) $12$-dimensional and $\Phi$ may even be chosen not to come from any twistor-spinor on $\mathbb{S}^2$, see \cite{AmmGinJostMoZhu09}.
Note that, if one changes $f$ into $\wih{f}:=\iota\circ\pi$, where $\iota:\R\mathrm{P}^2\lra\C\mathrm{P}^2$ is a totally geodesic embedding and $\pi:\mathbb{S}^2\lra\R\mathrm{P}^2$ is the canonical projection, then $\wih{f}$ is harmonic but this time $\wih{f}^*T\C\mathrm{P}^2$ becomes trivial, hence no non-trivial Dirac-harmonic map can be found using our methods.\\

\noindent In an analogous way, existence results for harmonic maps by \eg Eells-Sampson \cite{EellsSampson} (see also \cite{SchoenYau76}) or Y.L. Xin \cite{Xin84} give Dirac-harmonic maps provided the cor\-res\-pon\-ding $\al$-genus does not vanish.

We summarize the results in the following two theorems, where we assume all surfaces to be connected.
Note that if we say that there is a $4$-dimensional space of Dirac-harmonic maps, we do not exclude the possibility that there is even a space of higher dimension.
We distinguish both cases where the homotopy class of the underlying map can be prescribed (Theorem \ref{t:examplesDiracharmdim2}) from the one where it cannot (Theorem \ref{t:examplesDiracharmdim2pi2neq0}).

\begin{theorem}\label{t:examplesDiracharmdim2}
Let $M$ be a closed orientable surface of genus $g$ and $N$ be a closed $n(\geq 3)$-dimensional orientable Riemannian manifold. 
Let $f:M\lra N$ be an arbitrary smooth map.
\beit
\item[$i)$] Assume $\pi_2(N)=0$, $g> 0$ and $n$ odd.
Then there exist a harmonic map $f_0$ homotopic to $f$ and at least $3$ spin structures $\chi$ on $M$ such that 
there is a $4$-dimensional space of Dirac-harmonic maps of the form $(f_0,\Phi)$. 
\item[$ii)$] Assume $\pi_2(N)=0$, $g>0$, $n$ even, and $f^*TN\lra M$ non-trivial.
Then there is a harmonic map $f_0$ homotopic to $f$ such that for any spin structure on~$N$
there is a $4$-dimensional space of Dirac-harmonic maps of the form $(f_0,\Phi)$. 
\item[$iii)$] Assume $N$ has non-positive sectional curvature or no focal point.
Let $f:M\lra N$ be any smooth map.
If $n$ is odd and $g\geq 1$, then there exists a spin structure $\chi$ on $M$ for which a non-trivial Dirac-harmonic map $f_0$ exists with $f_0$ homotopic to $f$.
If $n$ is even or $g=0$, then the same holds as soon as $f^*TN\lra M$ is non-trivial.
\eeit
\end{theorem}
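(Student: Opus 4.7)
The plan is to combine existence theorems for harmonic maps from surfaces with the index formula of Proposition~\ref{p:indexDME2dim} and then invoke Theorem~\ref{t:mainresult}. Since $m=2$ gives $a_m=4$, the task in each of the three parts reduces to (a) producing a harmonic representative $f_0$ of $[f]$ and (b) exhibiting spin structures $\chi$ on $M$ with $\al(M,\chi,f)\neq 0$; given both, Theorem~\ref{t:mainresult} together with the homotopy invariance of the $\al$-genus (Corollary~\ref{c:alphahomotinv}) automatically supplies a real $4$-dimensional space of Dirac-harmonic maps with underlying map $f_0$.

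For step (a), parts (i) and (ii) use the Sacks-Uhlenbeck--Lemaire theorem, applicable because $\pi_2(N)=0$, whereas part (iii) uses Eells-Sampson in the non-positive-curvature case or Xin's extension in the no-focal-point case, both quoted in the paragraph preceding the theorem. For step (b), since $N$ is orientable one has $w_1(f^*TN)=0$, so formula~\eqref{eq:indthmsurf} reduces modulo $2$ to
\[
\al(M,\chi,f) \;=\; (n+2)\,\alpha(M,\chi) + w_2(f^*TN)[M].
\]
In case (i), $n$ odd gives $\al(M,\chi,f)=\alpha(M,\chi)+w_2(f^*TN)[M]$; one picks $\alpha(M,\chi)$ to be the $\mathbb{Z}_2$-complement of $w_2(f^*TN)[M]$, and the classical Arf-invariant count (on a genus-$g$ surface there are $2^{g-1}(2^g+1)$ spin structures with $\alpha=0$ and $2^{g-1}(2^g-1)$ with $\alpha=1$) produces at least three spin structures with $\al(M,\chi,f)\neq 0$. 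In case (ii), $n$ even forces $(n+2)\alpha(M,\chi)=0$, so $\al(M,\chi,f)=w_2(f^*TN)[M]$; the non-triviality of $f^*TN$ as a rank-$\geq 4$ orientable real bundle over the surface $M$ forces $w_2(f^*TN)[M]=1$ by the Stiefel-Whitney classification of bundles recalled in the preceding text, so $\al(M,\chi,f)=1$ for \emph{every} spin structure on $M$. In case (iii), Corollary~\ref{c:al0nonzero} directly handles the subcase $n$ odd with $g\geq 1$, while the collapsed formula above treats the subcase $n$ even or $g=0$ exactly as in (ii).

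The principal bookkeeping concern is the counting of spin structures with prescribed Arf invariant in case (i); once this is settled, the argument amounts to concatenating Proposition~\ref{p:indexDME2dim}, the harmonic-map existence theorems cited in the paper, and Theorem~\ref{t:mainresult}, and no new calculation beyond assembling these ingredients is required.
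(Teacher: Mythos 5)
Your approach matches the paper's: Sacks--Uhlenbeck/Lemaire (resp.\ Eells--Sampson/Xin for part~$(iii)$) supplies the harmonic representative, Proposition~\ref{p:indexDME2dim} computes the $\al$-genus, and Theorem~\ref{t:mainresult} produces the $4$-dimensional space since $m=2\equiv 2\ (8)$ gives $a_m=4$. Your reduction of~\eqref{eq:indthmsurf} to $\al(M,\chi,f)=(n+2)\,\alpha(M,\chi)+w_2(f^*TN)[M]$ when $w_1(f^*TN)=0$ is correct, and parts~$(ii)$ and~$(iii)$ are handled correctly.

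However, your count in~$(i)$ does not actually deliver ``at least three'' spin structures in all cases. With $n$ odd you need $\alpha(M,\chi)=1+w_2(f^*TN)[M]$ in~$\mZ_2$. If $w_2(f^*TN)[M]=1$ you need $\alpha(M,\chi)=0$, and indeed $2^{g-1}(2^g+1)\geq 3$ for every $g\geq 1$. But if $w_2(f^*TN)[M]=0$ --- which, since $f^*TN$ is orientable of rank $n\geq 3$ over a surface, is equivalent to $f^*TN$ being trivial --- you need $\alpha(M,\chi)=1$, and $2^{g-1}(2^g-1)=1$ when $g=1$: only the non-bounding spin structure on $\mathbb{T}^2$ works. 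So your argument in fact produces three such spin structures only when $w_2(f^*TN)[M]\neq 0$ or $g\geq 2$, and one otherwise. (Note that the paper's own Theorem~\ref{t:examplesDiracharmdim2}~$(i)$ appears to be overstated in exactly this edge case: the discussion preceding the theorem and Corollary~\ref{c:al0nonzero} only assert the existence of a single spin structure with nonzero index, and no further argument for the ``three'' is visible in the text.)
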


In the case $\pi_2(N)\neq 0$ and if $N$ does not satisfy the conditions of iii), bubbling-off can happen, and in general one cannot prescribe
a homotopy class $[f]\in [M,N]$. It only can be prescribed modulo $\pi_2(N)$. However similar statements follow in this case, summarized in 
Theorem~\ref{t:examplesDiracharmdim2pi2neq0}. 

In this theorem two modifications of the spin condition appear that shall be discussed now.

The second Stiefel-Whitney class of $TN$, denoted by $w_2(TN)\in H^2(N,\mZ_2)=\Hom(H_2(N,\mZ_2),\mZ_2)$, can be composed
with the map $\mmmod 2: H_2(N,\mZ)\to H_2(N,\mZ_2)$ and then with the Hurewicz map $h:\pi_2(N)\to H_2(N,\mZ)$.
Then $w_2(TN)=0$ if and only if $N$ is spin, and it is an elementary exercise to show that $w_2(TN)\circ \mmod 2 \circ h=0$ 
if and only if the universal covering $\witi N$ of $N$ is spin.

If $w_2(TN)\circ \mmod 2$ vanishes, then $w_2(TN)$ factors over the Bockstein map $\beta:H_2(N,\mZ_2)\to H_1(N,\mZ)$, i.e.\ there is a homomorphism
$w: \mathrm{im}(\beta)\to \mZ_2$ with $w_2(TN)=w\circ \beta$.
Choose a complement $\Gamma$ of the $2$-torsion subgroup of $H_1(N,\mZ)$, and let 
$\wihat \Gamma\subset \pi_1(N)$ be its preimage under the Hurewicz map $\pi_1(N)\to H_1(N,\mZ)$. Let $\wihat N\to N$ be the covering associated
to $\wihat \Gamma$. This is a finite covering and the number of sheets is the order of the $2$-torsion group of $H_1(N,\mZ)$, thus a power of~$2$.
Since $\mathrm{im}(\beta)$ is included in the $2$-torsion subgroup of $H_1(N,\mZ)$, the composition $H_2(\wihat N,\mZ)\to H_2(N,\mZ)\stackrel{w_2(TN)}{\to} \mZ_2$ vanishes, and it follows that $\wihat N$ is spin.

This provides a necessary condition for the vanishing of $w_2(TN)\circ \mmod 2$, but it is unclear to us whether this is sufficient as well.

\begin{theorem}\label{t:examplesDiracharmdim2pi2neq0}
Let $N$ be a closed $n(\geq 3)$-dimensional orientable Rie\-man\-nian manifold. 
\beit
\item[$iv)$] Assume that the universal covering $\witi N$ of $N$ is not spin.
Then there is a non-constant harmonic map $f_0:\mathbb{S}^2\to N$ such that $f_0^*TN$ is non-trivial. 
In particular, there is a $4$-dimensional space of Dirac-harmonic maps of the form $(f_0,\Phi)$.
\item[$v)$] Assume that $\witi N$ is spin, even-dimensional and that $w_2(TN)$ defines a non-trivial map $H_2(N,\mZ)\to \mZ_2$.
Then there is an oriented closed surface $M$ of genus $g\geq 1$ and a non-constant harmonic map $f_0:M\to N$ such that for any spin structure on $M$
there is a $4$-di\-men\-sio\-nal space of Dirac-harmonic maps of the form $(f_0,\Phi)$.
\item[$vi)$] Assume that $\witi N$ is spin, odd-dimensional and that $w_2(TN)$ defines a non-trivial map $H_2(N,\mZ)\to \mZ_2$.
Then there is an oriented closed surface $M$ of genus $g\geq 1$, a non-constant harmonic map $f_0:M\to N$, and at least $3$ spin structures
on $M$ such that there is a $4$-dimensional space of Dirac-harmonic maps of the form $(f_0,\Phi)$. 
\item[$vii)$] Assume that $w_2(TN)$ defines a trivial map $H_2(N,\mZ)\to \mZ_2$ (in particular $\witi N$ is spin) and that $n$ is odd.  
Then for every oriented closed surface $M$ of genus $g\geq 1$ and for every smooth map $f:M\to N$ with $f_*[M]\notin h(\pi_2(N))$, there exists a non-constant harmonic map $f_0:M\to N$ and at least one spin structure
on $M$ such that there is a $4$-dimensional space of Dirac-harmonic maps of the form $(f_0,\Phi)$. 
\item[$viii)$] Assume that $w_2(TN)$ defines a trivial map $H_2(N,\mZ)\to \mZ_2$, and that $N$ is not spin.
Then there is a non-orientable closed surface $\ol M$ on which
there is a non-constant harmonic map $\bar f_0:\ol M\to N$ with 
$\<\ol f_0^*w_2(TN),[\ol M]\>\neq 0$. 
\begin{itemize}
\item[$viii-\al)$]
Assume that $\ol M$ is not diffeomorphic to $\mR \mathrm{P}^2$ and that $n$ is odd.
The orientation covering $\wihat M\to \ol M$ defines a non-constant harmonic map $\hat f_0:\wihat M\to N$. 
Then there is at least one spin structure on~$\wihat M$ for which
there is a $4$-dimensional space of Dirac-harmonic maps of the form $(\hat f_0,\Phi)$.
\item[$viii-\beta)$]
Assume $\ol M=\mR \mathrm{P}^2$.
Then the induced map $\pi_1(\mR \mathrm{P}^2)\to \pi_1(N)$ is injective.
Thus $\pi_1(N)$ has an element of order $2$.
\end{itemize}
\eeit
\end{theorem}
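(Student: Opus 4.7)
The plan is to apply Theorem~\ref{t:mainresult} case by case. Since $N$ is orientable, $w_1(f^*TN)=0$ for any $f$, so Proposition~\ref{p:indexDME2dim} collapses to
\[\al(M,\chi,f)=(n+2)\alpha(M,\chi)+w_2(f^*TN)[M]\pmod 2.\]
For $n$ even this becomes $w_2(f^*TN)[M]$, independent of $\chi$; for $n$ odd it is $\alpha(M,\chi)+w_2(f^*TN)[M]$. I will also repeatedly use that any closed oriented surface of genus $g\geq 1$ carries at least $2^{g-1}(2^g+1)\geq 3$ bounding spin structures (with $\alpha=0$) and at least one non-bounding spin structure (with $\alpha=1$).

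For (iv), failure of $\widetilde N$ to be spin says the composition $w_2(TN)\circ(\bmod 2)\circ h\colon\pi_2(N)\to\mZ_2$ is non-zero; since $h$ factors through the $\pi_1(N)$-coinvariants (inner automorphisms act trivially on $H_*$), some generator $\alpha_i$ of a $\pi_1$-generating set of $\pi_2(N)$ is non-trivially detected, and Sacks--Uhlenbeck provides a harmonic representative $f_0\colon\mathbb S^2\to N$, yielding $\al(\mathbb S^2,f_0)=w_2(f_0^*TN)[\mathbb S^2]\neq 0$ (and $f_0^*TN$ non-trivial). For (v) and (vi), $\widetilde N$ is spin so $w_2\circ h=0$, and by hypothesis some $\beta\in H_2(N,\mZ)$ satisfies $w_2(\beta)\neq 0$; by Thom's theorem $\beta=f_*[M]$ for a smooth map from a closed oriented surface, necessarily of genus $g\geq 1$ since $\beta\notin h(\pi_2(N))\subseteq\ker w_2$. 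Sacks--Uhlenbeck in each class of $[M,N]/\pi_2(N)$ gives a harmonic $f_0$ with $f_{0*}[M]-\beta\in h(\pi_2(N))$, so $w_2(f_0^*TN)[M]\neq 0$ is preserved; for (v) this handles every $\chi$, and for (vi) any of the at-least-three bounding spin structures gives $\al=0+1\neq 0$.

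For (vii), $w_2(TN)$ vanishes on $H_2(N,\mZ)$, hence on $f_0^*TN[M]$ for every $f_0$; given $f$ with $f_*[M]\notin h(\pi_2(N))$, Sacks--Uhlenbeck yields a harmonic $f_0$ with $f_{0*}[M]-f_*[M]\in h(\pi_2(N))$, so $f_{0*}[M]\neq 0$ and $f_0$ is non-constant, while $\al(M,\chi,f_0)=\alpha(M,\chi)\neq 0$ on any non-bounding $\chi$. For (viii), universal coefficients combined with the hypotheses forces $w_2(TN)\in H^2(N,\mZ_2)$ to be detected by some class in $H_2(N,\mZ_2)$ not lying in the image of $H_2(N,\mZ)$; Thom's $\mZ_2$-version realizes this by a smooth map $\bar f\colon\bar M\to N$ with $\bar M$ closed, necessarily non-orientable (orientability would lift $\bar f_*[\bar M]_{\mZ_2}$ integrally). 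An equivariant Sacks--Uhlenbeck argument on the orientation cover produces a non-constant harmonic $\bar f_0$ with $\bar f_0^*w_2(TN)[\bar M]_{\mZ_2}\neq 0$, the $\pi_2(N)$-ambiguity being absorbed in $\ker w_2$.

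In (viii-$\al$), the orientation double cover $\pi\colon\widehat M\to\bar M$ is oriented of genus $\geq 1$ (since $\bar M\neq\R\mathrm{P}^2$), and $\widehat f_0:=\bar f_0\circ\pi$ is harmonic and non-constant; the $w_2$-contribution vanishes because $\pi_*[\widehat M]_{\mZ_2}=2[\bar M]_{\mZ_2}=0$, so for $n$ odd the $\al$-genus reduces to $\alpha(\widehat M,\chi)$, non-zero on a non-bounding spin structure. In (viii-$\beta$), suppose $\bar f_{0*}\colon\mZ_2\to\pi_1(N)$ is trivial; then $\bar f_0$ factors through the universal cover $\widetilde N$, which is spin because the hypothesis $w_2\equiv 0$ on $H_2(N,\mZ)$ forces $w_2\circ h=0$, and then $\bar f_0^*w_2(TN)$ factors through $w_2(T\widetilde N)=0$, contradicting our construction; hence $\bar f_{0*}$ is injective and $\pi_1(N)$ has an element of order $2$. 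The main obstacle I anticipate is justifying Sacks--Uhlenbeck on the non-orientable surface of (viii), which I would handle via a $\mZ/2$-equivariant minimization on the orientation cover, and verifying throughout (v)--(viii) that $\pi_2(N)$-bubbling preserves the needed mod-$2$ homological invariants; the remaining ingredients are straightforward algebraic topology combined with the simplified index formula above.
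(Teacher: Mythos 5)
Your proposal follows essentially the same route as the paper: the simplified index formula $\al(M,\chi,f)=(n+2)\al(M,\chi)+w_2(f^*TN)[M]$, realization of the relevant homology classes by surfaces via Thom, Sacks--Uhlenbeck to produce harmonic representatives, counting bounding/non-bounding spin structures, passing to the orientation double cover in $viii$-$\al$, and the lifting argument for $viii$-$\beta$. The case-by-case conclusions are all correct.

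The places where you flag anticipated obstacles are exactly the places where the paper does the real work, and your proposed fixes differ slightly from the paper's. You suggest controlling the $\pi_2(N)$-bubbling by informal statements like ``$f_{0*}[M]-\beta\in h(\pi_2(N))$'' and treating the non-orientable case $viii$ by a $\mZ/2$-equivariant minimization on the orientation cover. The paper instead formulates precise bordism statements (Theorems~\ref{theo.su.bord.a}--\ref{theo.su.bord.or} in Appendix~\ref{sec.bubble}): the $\al_i$-minimizers converge to a harmonic map on $M\amalg\coprod\mathbb{S}^2$ which is \emph{spin-bordant} to the original map, and the invariance of $\al$ and of $w_2[\cdot]$ under spin-bordism then gives the preservation you need in one stroke, with the Stiefel--Whitney number possibly redistributing among components (which is why the paper then isolates the component carrying the nonzero contribution in cases $iv$ and $v$/$vi$). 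For the non-orientable case, rather than an equivariant argument, the paper observes that orientability of the source is simply not used in Sacks--Uhlenbeck's existence proof (it only enters for the Hopf quadratic differential), so the statement extends directly; this is cleaner than equivariant minimization, which has its own transversality/regularity subtleties. Also, in $iv$ the paper does not invoke the $\pi_1$-module generating set of $\pi_2(N)$; it picks any sphere with nonzero $w_2$-pairing and uses the bordism statement to find a harmonic component sphere with the same pairing --- your coinvariants argument is an acceptable alternative but the bordism route subsumes it. So: correct proof strategy, but the key technical input (the bordism-refined Sacks--Uhlenbeck statements) is asserted rather than supplied, and the paper's mechanism for establishing it differs from what you propose.
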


In $viii)$ we used $[\ol M]$ for the $\mZ_2$-fundamental class of $\ol M$.
Note that, in the cases where either $H_2(N,\mZ)\bui{\to}{w_2(TN)} \mZ_2$ vanishes and $n$ is even or $\ol M=\mR \mathrm{P}^2$ (case $viii-\beta)$) we cannot deduce the existence of non-trivial Dirac-harmonic maps.

\begin{proof} We prove all cases separately.\\
$iv)$ Choose any smooth map $f:\mathbb{S}^2\to N$ with $\<f^*w_2(TN),[\mathbb{S}^2]\>\neq 0$.
Applying Sacks-Uhlenbeck's method (Theorem \ref{theo.su.bord.or}), one obtains a harmonic map $f_0:\coprod_{j=1}^l\mathbb{S}^2\to N$ which is spin bordant to $f$, in particular $\<f^*w_2(TN),[\mathbb{S}^2]\>=\<f_0^*w_2(TN),[\coprod_{j=1}^l\mathbb{S}^2]\>=\sum_{j=1}^l\<f_{0j}^*w_2(TN),[\mathbb{S}^2]\>$, where $f_{0j}:\mathbb{S}^2\to N$ denotes the (harmonic) map induced by $f_0$ on the $j^{\textrm{th}}$ connected component of $\coprod_{j=1}^l\mathbb{S}^2$.
In particular there is at least one $j\in\{1,\ldots,l\}$ with $\<f_{0j}^*w_2(TN),[\mathbb{S}^2]\>\neq 0$, which already implies that $f_{0j}$ is non-constant.
Therefore, the harmonic map $f_{0j}:\mathbb{S}^2\to N$ satisfies $\al(\mathbb{S}^2,f_{0j})=\<f_{0j}^*w_2(TN),[\mathbb{S}^2]\>\neq 0$ and hence provides a $4$-dimensional space of Dirac-harmonic maps.\\
$v)$ Since $H_2(N,\mZ)\bui{\to}{w_2(TN)} \mZ_2$ does not vanish, there exists a closed orientable surface $M$ and a smooth map $f:M\to N$ such that $\<f^*w_2(TN),[M]\>\neq 0$.
Necessarily $M$ is of positive genus because of $\witi N$ being spin.
Theorem \ref{theo.su.bord.or} yields a harmonic map $f_0:M\amalg\coprod_{j=1}^l\mathbb{S}^2\to N$ which is bordant to $f$, in particular $\<f^*w_2(TN),[M]\>=
\<f_0^*w_2(TN),[M\amalg\coprod_{j=1}^l\mathbb{S}^2]\>=\<f_{00}^*w_2(TN),[M]\>+\sum_{j=1}^l\<f_{0j}^*w_2(TN),[\mathbb{S}^2]\>$ , where $f_{00}:M\to N$ and $f_{0j}:\mathbb{S}^2\to N$ denote the maps induced on each connected component.
Again, since $\witi N$ is spin, each $\<f_{0j}^*w_2(TN),[\mathbb{S}^2]\>$ vanish, so that $\<f_{00}^*w_2(TN),[M]\>\neq 0$, in particular $f_{00}$ is a non-trivial harmonic map.
Because of $n$ being even, we obtain $\al(M,\chi,f_{00})=\<f_{00}^*w_2(TN),[M]\>\neq 0$ for every spin structure $\chi$ on $M$, which shows the result in that case.\\
$vi)$ The only difference with $v)$ lies in the formula $\al(M,\chi,f_{00})=\al(M,\chi)+\<f_{00}^*w_2(TN),[M]\>$.
Choosing those spin structures $\chi$ with $\al(M,\chi)=0$ (and there are at least $3$ of those) one obtains the statement.\\
$vii)$ For any closed orientable surface $M$ of genus $g\geq 1$ and any smooth map $f:M\to N$ with $f_*[M]\notin h(\pi_2(N))$, Theorem \ref{theo.su.bord.or} yields a harmonic map $f_0:M\to N$ which has to satisfy $\<f_0^*w_2(TN),[M]\>=0$ because of the vanishing of the map $H_2(N,\mZ)\to \mZ_2$.
However, because only $2$-spheres have possibly popped off $M$, the map $f_0$ also satisfies $f_0{}*[M]\notin h(\pi_2(N))$, in particular $f_0$ cannot be constant. 
Since $n$ is odd and $g\geq 1$, there exists at least one spin structure $\chi$ on $M$ with $\al(M,\chi)=1$, in particular $\al(M,\chi,f_0)=\al(M,\chi)=1$ and the statement follows.\\
$viii)$ In case $H_2(N,\mZ)\bui{\to}{w_2(TN)} \mZ_2$ vanishes but $N$ itself is non-spin, there exists a (necessarily) non-orientable closed surface $\ol M$ and a smooth map $\ol f:\ol M\to N$ with $\<\ol f^*w_2(TN),[\ol M]\>\neq 0$.
Theorem \ref{theo.su.bord.non-or} gives a harmonic map $\ol f_0:\ol M\to N$ with $\<\ol f_0^*w_2(TN),[\ol M]\>=\<\ol f^*w_2(TN),[\ol M]\>\neq 0$ (recall that none of the bubbles contribute to the second Stiefel-Whitney number because of $\witi N$ being spin).
In particular $\ol f_0$ cannot be constant.
Obviously, the induced map $\hat f_0$ on $\wihat M$ remains harmonic and non-constant.
Beware however that, since $\wihat M\to \ol M$ is two-fold, $\<\hat f_0^*w_2(TN),[\wihat M]\>=2\cdot\<\ol f_0^*w_2(TN),[\ol M]\>=0$, so that $\al(\wihat M,\chi,\hat f_0)=n\cdot\al(\wihat M,\chi)$ for any spin structure $\chi$ on $\wihat M$.
In case $n$ is odd and $\wihat M$ is not diffeomorphic to $\mathbb{S}^2$ (that is, $\ol M$ is not diffeomorphic to $\mR \mathrm{P}^2$), there exists at least one spin structure $\chi$ on $\wihat M$ with $\al(\wihat M,\chi)=1$, hence $\al(\wihat M,\chi,\hat f_0)=1$, which proves $viii-\al)$.
In case $\ol M=\mR \mathrm{P}^2$, assume the group homomorphim $\pi(\mR \mathrm{P}^2)\to\pi_1(N)$ induced by $\ol f_0$ were trivial. 
Then $\ol f_0$ could lift to a smooth map $\tilde{f}_0:\mR \mathrm{P}^2\to\witi N$ through the universal cover $\witi N\to N$.
Since by assumption $\witi N$ is spin one would have $0=\<\tilde{f}_0^*w_2(T\witi N),[\mR \mathrm{P}^2]\>=\<\ol f_0^*w_2(TN),[\mR \mathrm{P}^2]\>$, contradiction.
Therefore $\pi(\mR \mathrm{P}^2)\to\pi_1(N)$ is injective.
This shows $viii-\beta)$ and concludes the proof.
\end{proof}


\subsection{The case $m\geq 3$}\label{subsec.mge3}

In higher dimensions, existence results for harmonic maps still provide non-trivial examples of Dirac-harmonic maps.
Consider for instance the situation where the closed target manifold $N$ has non-positive sectional curvature.
Then there exists an energy-minimizing (hence harmonic) map in every homotopy class of smooth maps from any closed manifold $M$ into $N$ \cite{EellsSampson,SchoenYau76}. 
As an application, pick \emph{any} closed connected Riemannian spin manifold $N$ with non-positive sectional curvature and dimension $n\equiv1\;(8)$.
Let $N'$ be \emph{any} $n$-dimensional closed Riemannian spin manifold with non-vanishing $\alpha$-genus.
In dimension $n=m=9$ the Riemannian pro\-duct of a Bott manifold with an $\mathbb{S}^1$ with non-bounding spin structure gives an example of such a manifold.
Define the map $f:N\amalg N'\to N$, $f_{|_N}:=\id_N$ and $f_{|_{N'}}:={\rm cst}$.
Obviously, the restricted map $f_{|_{N'}}$ being constant, the vector bundle $f_{|_{N'}}^*TN$ is trivial and hence 
\be \alpha(N\amalg N',f)&=&\alpha(N,f_{|_N})+\alpha(N',f_{|_{N'}})\\
&=&\alpha(N,\id_N)+n\alpha(N')\\
&=&\alpha(N,\id_N)+\alpha(N')\\
&=&\alpha(N,\id_N)+1.
\ee
On the other hand, since disks are contractible and $N$ is connected, the map $f$ induces a (smooth) map $f^\sharp:M:=N\sharp N'\to N$ which is spin bordant to $f$.
By the spin-bordism-invariance of the $\alpha$-genus, $\alpha(M,f^\sharp)=\alpha(N,\id_N)+1$.
Therefore, either $\alpha(N,\id_N)=1$ or $\alpha(M,f^\sharp)=1$.
A $2$-dimensional space of Dirac-harmonic maps is provided by $(N,\id_N)$ in the first case and by $(M,f^\sharp)$ in the second case.

\appendix

\section{Bubbling-off for harmonic maps}\label{sec.bubble}

In their celebrated article \cite{SacksUhlenbeck_AnnMath} Sacks and Uhlenbeck showed the existence of harmonic maps under certain conditions.
In particular, they explained that bubbling-off effects play an important role. However, the conclusions one may draw out of the proofs
of their article are stronger than the statements of their theorem. In particular, certain relations to bordisms hold, see 
Theorems~\ref{theo.su.bord.a} to~\ref{theo.su.bord.or}. As these theorems are used in our article, we will explain them and give other references 
that finally yield complete proofs.
We also need harmonic maps defined on non-orientable surfaces, and modifications of the results of \cite{SacksUhlenbeck_AnnMath}
also hold in this non-orientable case.
In order to fa\-ci\-li\-ta\-te the comparison to \cite{SacksUhlenbeck_AnnMath} 
we adapt to their notation to a large extend. One considers maps $s:M\to N$ from a closed Riemannian surface $M$ with metric $g$ 
to a closed Riemannian manifold $N$ with metric $h$.

\subsection{Non-orientability of $M$}
As already said above, in  \cite{SacksUhlenbeck_AnnMath} it is claimed that $M$ should be orientable. This assumption can be easily removed.
Orientability of~$M$ is needed to define
the quadratic differential $\phi$ in \cite{SacksUhlenbeck_AnnMath} and to prove \cite[Lemma~1.5]{SacksUhlenbeck_AnnMath} which states $\pa \phi=0$ for 
harmonic maps.
One easily verifies that the only parts in \cite{SacksUhlenbeck_AnnMath} using orientation are parts of the introduction, the definition of $\phi$, 
Lemma~1.5 and Theorem~1.6. 

As a consequence, in all existence results of \cite{SacksUhlenbeck_AnnMath}  the ``orientability of~$M$''-assumption can be removed. 
Theorem 1.6 provides an interpretation for harmonic maps with the special property $\phi=0$. By slightly exchanging notation, this also works for 
non-orientable surfaces, as we will explain now.
On Riemann surfaces quadratic differentials are in natural bijection to tracefree symmetric $2$-tensors, and $\pa$ then turns into the 
divergence. Thus the condition $\pa\phi=0$ is equivalent to saying that the $g$-trace-free part of $s^*h$ has vanishing divergence. This is usually
written as $\divv (s^*h)_0=0$. Lemma~1.5 then says: If $s$ is harmonic, then $(s^*h)_0$ is divergence free. Theorem~1.6 then states: If $s$ is harmonic and if $s^*h_0=0$, then $s$ is a (conformal) branched immersion.
Since those conditions are independent on the orientability of~$M$, both proofs also work in the non-orientable case.

\subsection{Bubbling-off and bordisms}

We recall further notations from \cite{SacksUhlenbeck_AnnMath}.
For $\alpha\geq 1$ they define the energy functional $E_\al(s):=\int_M (1+|ds|^2)^\al\;dv^M$.
The map $s_0:M\to N$ is harmonic if and only if it is a stationary point of the energy functional $E_1$. One can try to fix a homotopy class $[M,N]$ of maps $M\to N$ 
and to minimize $E_1$ in this class. Unfortunately the direct method in the calculus of variation fails. 
The reason for this is that $E_1$ is conformally invariant in the following sense:
if $g_1$ and $g_2$ are 
conformal metrics with $\vol(M,g_1)=\vol(M,g_2)$ then the functional $E_1$ defined with respect to $g_1$ coincides with the functional $E_1$ defined for $g_2$.
The problem is avoided if one minimizes 
$E_\al$ for $\al>1$ instead. For $\al>1$ Sacks and Uhlenbeck showed that any homotopy class $[M,N]$ contains a smooth map $s_\al$ minimizing 
$E_\al$. It remains to discuss whether and in which sense the functions $s_\al$ converge to a harmonic map $s:M\to N$ for $\al\to 1$.
An interesting bubbling-off phenomenon appears in \cite{SacksUhlenbeck_AnnMath}.  
As one often passes to subsequences, we will take a sequence  $\al_i>1$, $i\in \mN$, converging to $1$, and substitute $s_i:=s_{\al_i}$.

The following theorem can be deduced from the proofs in \cite{SacksUhlenbeck_AnnMath}:
\begin{theorem}\label{theo.su.bord.a}
Assume that $M$ is a closed surface, and that $n=\dim N\geq 3$. Further let
$s_i:M\to N$ be stationary points of $E_{\al_i}$ where $\al_i>1$, $i\in \mN$, is a sequence 
converging to $1$, and assume that  $E_{\al_i}(s_i)$ converges. Then after passing to a subsequence of indices $i$ there is 
a harmonic map $s:M\amalg \coprod_{j=1}^\ell \mathbb{S}^2\to N$ for some $\ell\in \mN\cup\{0\}$ 
such that $s_i$ converges to $s$ in a certain sense. 
\end{theorem}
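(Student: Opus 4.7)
The plan is to recover Theorem \ref{theo.su.bord.a} by combining the \emph{energy quantization} and \emph{bubbling} arguments that are already present in \cite{SacksUhlenbeck_AnnMath}, modified in the obvious way to cover non-orientable $M$ as discussed in the preceding subsection. Throughout, all statements about convergence are meant after passing to a subsequence, so I will not mention this each time.

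First I would exploit the $\varepsilon$-regularity theorem of Sacks-Uhlenbeck: there is a universal $\varepsilon_0 > 0$ depending only on $N$ such that, whenever $s_i$ is $E_{\alpha_i}$-stationary and $\int_{B_r(x)} |ds_i|^2\, dv^M < \varepsilon_0$ on a small geodesic ball, one gets uniform $C^k$-bounds on $s_i$ on $B_{r/2}(x)$, independent of $i$ (for $\alpha_i$ close to $1$). Using the uniform energy bound from the convergence of $E_{\alpha_i}(s_i)$, I would then define the \emph{blow-up set}
\[
\Sigma := \Bigl\{x \in M \;\Big|\; \liminf_{i\to\infty}\int_{B_r(x)} |ds_i|^2\,dv^M \geq \varepsilon_0 \text{ for all } r>0\Bigr\}.
\]
A standard covering argument shows $\Sigma = \{x_1,\dots,x_\ell\}$ is finite with $\ell \leq E_1^{\lim}/\varepsilon_0$.

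On the open set $M \setminus \Sigma$, the $\varepsilon$-regularity combined with elliptic bootstrap (and the fact that the difference between the Euler-Lagrange equations of $E_{\alpha_i}$ and $E_1$ is a lower-order term whose coefficient tends to $0$) yields $C^\infty_{\rm loc}$-convergence $s_i \to s_\infty$ on $M \setminus \Sigma$ to a smooth harmonic map $s_\infty: M \setminus \Sigma \to N$. Sacks-Uhlenbeck's removable-singularities theorem (\cite[Thm.~3.6]{SacksUhlenbeck_AnnMath}), which holds on any Riemann surface chart and hence is insensitive to the global orientation of $M$, extends $s_\infty$ smoothly across each $x_j$ to give a harmonic map $s_0: M \to N$.

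Next, at each $x_j \in \Sigma$ I would perform a conformal rescaling. Working in a conformal coordinate chart near $x_j$, choose $\lambda_{i,j} \to 0$ and centers $x_{i,j} \to x_j$ so that the rescaled maps $\tilde s_{i,j}(y) := s_i(x_{i,j} + \lambda_{i,j} y)$ satisfy, say, $\int_{B_1(0)} |d\tilde s_{i,j}|^2 = \varepsilon_0/2$ (this is possible by the concentration of energy and conformal invariance of $E_1$ in dimension~$2$; the $(\alpha_i-1)$-correction in $E_{\alpha_i}$ vanishes in the limit). Applying $\varepsilon$-regularity again to $\tilde s_{i,j}$ gives $C^\infty_{\rm loc}$-convergence on $\mathbb{R}^2$ minus possibly finitely many further concentration points to a harmonic map $\mathbb{R}^2 \setminus \{\text{finite}\} \to N$ of finite energy, which by removable singularities (applied inductively on sub-bubbles) extends to a smooth harmonic map $\tilde s_j : \mathbb{S}^2 \to N$. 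The hypothesis $\dim N \geq 3$ guarantees that bubbles cannot degenerate to constant maps in each concentration step by the \emph{energy gap} result of \cite[Thm.~3.3]{SacksUhlenbeck_AnnMath}: every non-constant harmonic map $\mathbb{S}^2 \to N$ has energy $\geq \varepsilon_0$, so the inductive bubbling tree is finite and produces a total of some $\ell' \geq \ell$ spheres, and the combined harmonic map $s := s_0 \amalg \coprod_{j=1}^{\ell'} \tilde s_j$ is the claim (relabeling $\ell := \ell'$).

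The delicate point — and the main obstacle — is specifying the mode of convergence $s_i \to s$ rigorously. The natural statement is the one used in the body of the article: smooth convergence on compact subsets of $M \setminus \Sigma$, plus appropriate rescaled smooth convergence near each bubble point, with a spin-bordism between $s_i$ and $s$ realized by interpolating annuli. For Theorems \ref{theo.su.bord.non-or} and \ref{theo.su.bord.or} one must moreover verify that no energy is lost in the necks connecting bubbles to their base, which is the content of the no-neck energy / zero-neck-length analysis (carried out in greater generality in later work, e.g.\ Parker's bubble-tree convergence). For our application it suffices that the limit is harmonic and spin-bordant to the original sequence, which is what the above construction delivers.
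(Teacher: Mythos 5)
Your proposal follows the same route as the paper's own (sketch of a) proof: reconstruct the Sacks--Uhlenbeck bubbling machinery ($\varepsilon$-regularity, blow-up set, $C^\infty_{\rm loc}$-convergence away from concentration points, removable singularities, rescaling to produce harmonic spheres, finiteness of the bubble tree by energy quantization, Parker's bubble-tree convergence for the precise mode of convergence). The paper is more telegraphic — it simply cites Theorems 4.4 and 4.6 of Sacks--Uhlenbeck and the notion of bubble tree from Parker — whereas you reconstruct the argument in more detail, but the substance is identical.

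One inaccuracy worth flagging: you write that the hypothesis $\dim N \geq 3$ ``guarantees that bubbles cannot degenerate to constant maps \ldots by the energy gap result.'' The energy gap (every non-constant harmonic $\mathbb{S}^2 \to N$ has energy $\geq \varepsilon_0$) is a compactness fact about the target $N$ that holds for \emph{every} compact Riemannian manifold, irrespective of its dimension; it is indeed what makes the bubble tree finite, but it has nothing to do with $n \geq 3$. In Sacks--Uhlenbeck's paper the dimension of $N$ plays no role in the bubbling analysis. The condition $n \geq 3$ in Theorem~\ref{theo.su.bord.a} is carried over from the context in which the theorem is applied in Section~\ref{sec.examples} (where the index-theoretic statements require $n \geq 3$), not because the convergence/bubbling argument would break down for $n = 2$. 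Your proof structure is otherwise correct; just delete the claimed link between $n \geq 3$ and the energy gap.
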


The convergence of $s_i$ to $s$ is as follows: there are finitely many poins $x_1$,\ldots, $x_l$ such that  $s_i$ converges
to $s$ in $C^1(U\setminus \{x_1,\ldots,x_l\})$, see  \cite[Theorem 4.4]{SacksUhlenbeck_AnnMath}. In $x_j$ spheres may bubble-off. We work in
isothermic coordinates around $x_j$, i.e.~$x_i\cong 0$ is the base point, and the coordinates are defined on the neighborhood $U(x_i)\cong B_{2r}(0)$. 
Then there are sequences $y_i\to 0$ and $\ep_i\to 0$ such that 
$w\mapsto s_i(\ep_i w+ y_i)$, defined on a ball of radius $r\ep_i^{-1}$, converges to a map $\ti s:\mR^2\to N$ in the $C^1$-topology. This map $\ti s$ is harmonic. Conformally
identifying $\mR^2$ with $\mathbb{S}^2\setminus\{p\}$ one obtains a harmonic map  $\mathbb{S}^2\setminus\{p\}\to N$, which can be extended to a harmonic map $\mathbb{S}^2\to N$, see
 \cite[Theorem 4.6]{SacksUhlenbeck_AnnMath}. This is the restriction of the map $s$ above to one of the spheres $\mathbb{S}^2$.
The situation is even slightly more involved as spheres can bubble-off simultaneously on different growth scales in the same point $x_j$.
In other words it might happen that on a given bubble, a finite numbers of new bubbles emerge. 
And this bubbles-on-bubbles-phenomenon can be iterated to bubbles on bubbles on bubbles, etc..
However, for a given manifold $N$ and a given upper bound on the energy 
(which trivially exists as we consider minimizing maps, see also \cite[Prop. 2.4]{SacksUhlenbeck_AnnMath}),
a finite number of iterations suffices. This leads to the notion of bubble tree.
The precise way in which the maps $s_\al$ converge to such a bubble tree was analyzed by Parker in \cite{parker:96}. 
We also recommend \cite{parker:03} for a more informal introduction to bubble trees.

This precise description of the bubbling-off procedure also implies the following theorem.

\begin{theorem}\label{theo.su.bord.non-or}
With the assumptions of the previous theorem, the subsequence can be chosen such that 
there is a $3$-dimensional manifold $W$ with boundary 
$\pa W= M\amalg \coprod_{j=1}^\ell \mathbb S^2\amalg M$ with smooth maps $F_i,F:W\to N$
extending~$s_i$ and~$s$.
\end{theorem}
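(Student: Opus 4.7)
I would build $W$ as a cylinder $M\times[0,1]$ with $\ell$ small open $3$-balls removed from its interior, one near each bubble point. Precisely, let $W_0:=M\times[0,1]$, choose pairwise disjoint closed $3$-balls $B_j$, $j=1,\ldots,\ell$, with $B_j$ in the interior of $W_0$ and centered at $(x_j,1-\delta)$ for some small $\delta>0$, and set $W:=W_0\setminus\bigsqcup_{j=1}^\ell B_j^\circ$. Removing open $3$-balls from the interior of a $3$-manifold adds $S^2$-boundary components, so
\[\pa W=(M\times\{0\})\sqcup(M\times\{1\})\sqcup\bigsqcup_{j=1}^\ell\pa B_j\;\cong\;M\sqcup M\sqcup\coprod_{j=1}^\ell\mathbb{S}^2,\]
as required. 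The map $F_i$ is immediate: setting $F_i(x,t):=s_i(x)$ defines a smooth map on all of $W_0$, hence on $W$, extending $s_i$ from the boundary component $M\times\{0\}$.

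The construction of $F$ is the main step. I prescribe $F|_{M\times\{1\}}:=s|_M$ and $F|_{\pa B_j}:=\ti s_j$ (via a fixed identification $\pa B_j\cong\mathbb{S}^2_j$), leaving $F$ on $M\times\{0\}$ and on the interior of $W$ to be chosen. For each $j$ pick a small disk $D_j\subset M$ around $x_j$ such that the vertical cylinder $D_j\times[0,1]$ contains $B_j$ in its interior, together with a collapse map $c_j:D_j\to\mathbb{S}^2_j$ sending $\pa D_j$ to the base point $p_j\in\mathbb{S}^2_j$ and mapping $D_j^\circ$ diffeomorphically onto $\mathbb{S}^2_j\setminus\{p_j\}$. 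On $M\times\{0\}$, define $F$ to equal $s|_M$ outside $\bigsqcup_j D_j$ and $\ti s_j\circ c_j$ on each $D_j$; this is well-defined and continuous because $\ti s_j(p_j)=s(x_j)$. Outside the cylinders $D_j\times[0,1]$, set $F(x,t):=s(x)$, which automatically agrees with the prescribed values on the top and bottom. On each shell $(D_j\times[0,1])\setminus B_j^\circ\cong S^2\times[0,1]$, the boundary data on the outer sphere $\pa(D_j\times[0,1])$ is homotopic to $\ti s_j$ (the only nontrivial part is $\ti s_j\circ c_j$ on $D_j\times\{0\}$, with the remaining part essentially constant equal to $s(x_j)$), and the inner sphere $\pa B_j$ carries $\ti s_j$ exactly, so the two sphere maps are homotopic and $F$ extends over the shell.

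The main obstacle is arranging that this patched-together construction be genuinely smooth across the interfaces between the product region, the shells and the prescribed boundary data. This is handled by a partition-of-unity argument together with the $C^1$-convergence $s_i\to s$ on $M\setminus\{x_1,\ldots,x_\ell\}$ from Theorem \ref{theo.su.bord.a} and Parker's no-neck property, which together ensure that $s$, $s_i$ and each $\ti s_j$ take values close to the common point $s(x_j)$ in the relevant transition zones (the rim $\pa D_j$ for $s$ and $s_i$, and a neighbourhood of the base point of $\mathbb{S}^2_j$ for $\ti s_j$). This produces smooth maps $F_i,F:W\to N$ extending $s_i$ and $s$ respectively, giving the desired bordism.
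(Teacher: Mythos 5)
Your construction of $W$ by deleting $\ell$ interior $3$-balls from $M\times[0,1]$ yields a manifold diffeomorphic to the one in the paper, which instead attaches $\ell$ $1$-handles to $(M\amalg\coprod_{j=1}^\ell\mathbb{S}^2)\times[0,1]$ joining $M\times\{1\}$ to each $\mathbb{S}^2\times\{1\}$; these are the same cobordism presented from opposite ends, so this part of your argument is sound. Observe also that the paper does not actually write down $F_i$ and $F$: it only describes $W$ and defers the existence of the maps to Parker's bubble-tree description of the convergence. Your attempt to make $F$ explicit therefore goes beyond the paper and must be checked with corresponding care.

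There is a real gap in that step. You set $F$ on $M\times\{0\}$ to be $s|_M$ outside $\coprod_j D_j$ and $\ti s_j\circ c_j$ on $D_j$, and claim continuity ``because $\ti s_j(p_j)=s(x_j)$.'' That identity matches the bubble to $s$ only at the single point $x_j$; it does not give matching along $\pa D_j$. For $y\in\pa D_j$ the outer value is $s(y)$ while the inner value is $\ti s_j(c_j(y))=\ti s_j(p_j)=s(x_j)$, and these disagree unless $s$ happens to be constant on $\pa D_j$, which it is not in general. To repair this you need to collapse only an inner disk $D_j'\subset D_j$ and, on the annulus $D_j\setminus D_j'$, interpolate between $s$ and the constant $s(x_j)$ inside a convex geodesic ball of $N$, shrinking $D_j$ if necessary. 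A second omission is the bubbles-on-bubbles phenomenon that the paper explicitly warns about: several spheres can detach at the same base point $x_j$ at different scales, so ``one ball near each bubble point'' undercounts when $\ell$ exceeds the number of base points, and the deleted balls together with the collapse maps must be nested according to the bubble tree rather than placed side by side, with the homotopy-extension argument on the shells carried out level by level in the tree. Both issues are fixable, but they are exactly what you are asking ``Parker's no-neck property'' to absorb, and they need to be spelled out for the argument to close.
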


We will now describe the manifold $W$.
We assume for simplicity that $M$ is connected, otherwise one should consider each connected component se\-pa\-ra\-te\-ly.
The manifold $W$ is obtained by attaching $\ell$ copies of a $1$-handle to $W_0:=(M\amalg \coprod_{j=1}^\ell \mathbb{S}^2)\times[0,1]$. Such an attachment
is carried out inductively as follows, see e.g.\ \cite[VI.6]{kosinski:93} for details. We combine two maps $\overline{B_1(0)}\to M$ and  $\overline{B_1(0)}\to S^2$ which are diffeomorphisms onto balls in $M$ and $\mathbb{S}^2$, to 
a map $G:\overline{B_1(0)}\amalg \overline{B_1(0)}\to (M\amalg \mathbb{S}^2)\times \{1\}\subset \pa W_0$. We define $W_1:= W_0 \cup_G ([-1,1]\times  \overline{B_1(0)})$,
where the subscript $G$ indicates that we identify each point in $\{-1,1\}\times \overline{B_1(0)})= \overline{B_1(0)}\amalg \overline{B_1(0)}$ with its image
under $G$ in $\pa W_0$. Strictly speaking $W_1$ is a smooth manifold with boundary and a corner, but this corner can be smoothed out by slightly 
changing the differentiable structure close to the corner. Then $W_1$ is a manifold with boundary, the boundary consists of two pieces: one piece is 
$(M\amalg \coprod_{j=1}^\ell \mathbb{S}^2)\times\{0\}\subset W_0\subset W_1$. The remaining piece is diffeomorphic to $M\amalg \coprod_{j=1}^{\ell-1}\mathbb{S}^2$.
Several choices are done here, but they do not alter the diffeomorphism type of $W_1$.

By attaching a further $1$-handle we obtain a manifold $W_2$ with boundary 
$(M\amalg \coprod_{j=1}^\ell \mathbb{S}^2) \amalg (M\amalg \coprod_{j=1}^{\ell-2}\mathbb{S}^2)$, and finally after $\ell$ such attachments, we get  $W=W_\ell$ with boundary 
$(M\amalg \coprod_{j=1}^\ell \mathbb{S}^2) \amalg M$.

If $M$ is orientable, we even get more structure on $W$. In this case $M$ also admits a spin structure, and this defines an orientation and spin structure
on $M\times [0,1]$. Orientations can then be chosen on the spheres $\mathbb{S}^2$ such that the induced orientation on $W_0$ extends to $W$. 
Similarly the spin structure on $M$ yields a unique spin structure on $W$. The manifold $W$ then has the boundary 
  $$\pa W=-(M\amalg \coprod_{j=1}^\ell \mathbb{S}^2) \amalg M.$$
Here the minus sign indicates that this piece of the boundary carries the opposite orientation,
and all pieces of the boundary carry the spin structures induced from $W$. Such a $W$ is called a spin-bordism
from $M\amalg \coprod_{j=1}^\ell \mathbb{S}^2$ to $M$.

For orientable surfaces we have thus strengthened the previous theorem:

\begin{theorem}\label{theo.su.bord.or}
With the hypotheses of the previous theorems, assume furthermore that $M$ carries a fixed orientation and spin structure.
Then the $3$-dimensional manifold $W$ carries an orientation and spin structure, such that it is a spin bordism from 
$M\amalg \coprod_{j=1}^\ell \mathbb{S}^2$ to $M$. 
\end{theorem}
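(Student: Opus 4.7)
My plan is to equip the manifold $W$ (already constructed geometrically in the preceding paragraph) with an orientation and spin structure by induction over the $\ell$ handle attachments, starting from the product $W_0 = (M \amalg \coprod_{j=1}^\ell \mathbb{S}^2) \times [0,1]$. First I would endow $W_0$ with the product orientation and spin structure coming from the given ones on $M$, the unique spin structure on each $\mathbb{S}^2$, and (for the moment) an arbitrary orientation on each sphere; the sphere orientations will be adjusted in the next step as needed. With the outward-normal convention, the boundary of $W_0$ decomposes as $-(M\amalg \coprod_j \mathbb{S}^2)\times\{0\} \amalg (M\amalg \coprod_j \mathbb{S}^2)\times\{1\}$, both pieces inheriting orientation and spin structure from $W_0$.

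Next I would analyse one 1-handle attachment $W_k \rightsquigarrow W_{k+1} = W_k \cup_G h$ at a time, where $h=[-1,1]\times\overline{B_1(0)}$ carries its standard orientation and unique spin structure, and $G$ glues the two ends $\{\pm 1\}\times\overline{B_1(0)}$ to two disjoint embedded 2-disks in the top boundary of $W_k$. The orientation on $h$ extends that on $W_k$ across the handle precisely when $G$ is orientation-reversing on each attaching disk (relative to the boundary orientation), a standard constraint from surgery theory that can always be met by reversing, if necessary, the orientation of the sphere $\mathbb{S}^2$ involved. For the spin structure the crucial observation is that any 2-disk carries a unique spin structure (since $H^1(\overline{B_1(0)},\mathbb{Z}_2)=0$), so the restrictions of the spin structures of $W_k$ and $h$ to the attaching disks are canonically isomorphic, and the two spin structures glue to a spin structure on $W_{k+1}$ extending both.

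Iterating $\ell$ times I obtain the desired orientation and spin structure on $W=W_\ell$. Its bottom boundary is unchanged, still $-(M\amalg \coprod_j \mathbb{S}^2)\times\{0\}$, while its top boundary is built from $M \times \{1\}$ by removing $2\ell$ disks and attaching $\ell$ handle-tubes, giving a connected surface diffeomorphic to $M$ with the induced boundary orientation. This exhibits $W$ as a spin bordism in the sense of Definition~\ref{d:spinbordantmaps} from $M\amalg \coprod_{j=1}^\ell \mathbb{S}^2$ to $M$, as required. The main (but mild) obstacle is the orientation-compatibility at each handle attachment, resolved by the freedom to reorient the spheres; the spin structure extension is automatic from the simple-connectedness of the attaching disks.
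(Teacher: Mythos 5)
Your argument is correct and follows essentially the same route as the paper: equip $W_0$ with the product orientation and spin structure, then extend across each $1$-handle in turn, using the freedom to reorient the spheres to satisfy the orientation-matching condition and using the uniqueness of spin structures on disks to glue the spin structures. The only minor slip is in the last paragraph, where you describe the top boundary of $W$ as ``built from $M\times\{1\}$''; more precisely it is built from $(M\amalg\coprod_{j=1}^\ell\mathbb{S}^2)\times\{1\}$ by removing $2\ell$ disks and gluing in $\ell$ tubes, which is indeed diffeomorphic to $M$ since connect-summing with spheres leaves the diffeomorphism type unchanged -- your conclusion is unaffected.
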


\subsection*{Acknowledgements}
The authors want to thank Guofang Wang, J\"ur\-gen Jost, Ulrich Bunke, Frank Pf\"affle, and Helmut Abels for interesting discussions and interesting talks related to this article.

\bibliographystyle{amsplain}

\providecommand{\bysame}{\leavevmode\hbox to3em{\hrulefill}\thinspace}


\end{document}